\newcommand{\JW}[1]{p_{#1}}
\newcommand{\N}{\mathbb{N}}
\newcommand{\Z}{\mathbb{Z}}
\newcommand{\R}{\mathbb{R}}
\newcommand{\C}{\mathbb{C}}
\newcommand{\F}{\mathbb{F}}
\newcommand{\G}{\mathbb{G}}
\newcommand{\hG}{\widehat{\mathbb{G}}}
\newcommand{\T}{\mathbb{T}}
\newcommand{\mc}{\mathcal}
\newcommand{\id}{\text{id}}
\newcommand{\Conv}{\text{Conv}}
\newcommand{\Tr}{\text{Tr}}
\newcommand{\TL}{\text{TL}}
\newcommand{\Mor}{\text{Mor}}
\newcommand{\Irr}{\text{Irr}}
\newtheorem{thm}{Theorem}[section]
\newtheorem{cor}[thm]{Corollary}
\newtheorem{lem}[thm]{Lemma}
\newtheorem{prop}[thm]{Proposition}
\theoremstyle{definition}
\newtheorem{defn}[thm]{Definition}
\theoremstyle{remark}
\newtheorem{rem}[thm]{Remark}
\newtheorem{notat}[thm]{Notation}
\newtheorem{ex}[thm]{Example}
\numberwithin{equation}{section}
\begin{document}

\title[Quantum automorphism groups]
{Reduced operator algebras of trace-preserving quantum automorphism groups}

\date{\today}

\author{Michael Brannan}

\address{Michael Brannan: Department of Mathematics, University  of Illinois at Urbana-Champaign, Urbana, IL 61801, USA}
\email{mbrannan@illinois.edu}
\urladdr{http://www.math.uiuc.edu/~mbrannan}

\keywords{Quantum automorphism groups, approximation properties, property of rapid decay, II$_1$-factor, solid von Neumann algebra, Temperley-Lieb algebra.}
\thanks{2010 \it{Mathematics Subject Classification:}
\rm{46L65, 20G42 (Primary); 46L54 (Secondary)}}

\begin{abstract}
Let $B$ be a finite dimensional C$^\ast$-algebra equipped with its canonical trace induced by the regular representation of $B$ on itself.  In this paper, we study various properties of the trace-preserving quantum automorphism group $\G$ of $B$.  We prove that the discrete dual quantum group $\hG$ has the property of rapid decay, the reduced von Neumann algebra $L^\infty(\G)$   has the Haagerup property and is solid, and that $L^\infty(\G)$ is (in most cases) a prime type II$_1$-factor.  As applications of these and other results, we deduce the metric approximation property, exactness, simplicity and uniqueness of trace for the reduced $C^\ast$-algebra $C_r(\G)$, and the existence of a multiplier-bounded approximate identity for the convolution algebra $L^1(\G)$. 
\end{abstract}
\maketitle

\section{Introduction} \label{section:intro}

Consider a finite set $X_n$ consisting of $n$ distinct points.  An elementary and well known fact from group theory is that the automorphism group of $C(X_n)$, the commutative algebra of complex valued functions over $X_n$, is precisely the permutation group $S_n = \text{Aut}(X_n)$.  In other words, the permutation group on $n$ points is the universal object within the category of groups acting on $C(X_n)$.
In \cite{Wa98}, Wang made the remarkable discovery that if one replaces the category of groups acting on $C(X_n)$ by the category of \textit{compact quantum groups} acting on $C(X_n)$, then there still exists a universal object within this new category, called the \textit{quantum permutation group} $S_n^+$.  $S_n^+$ is a C$^\ast$-algebraic compact quantum group which contains the classical permutation group $S_n$ as a quantum subgroup.  Moreover, the inclusion $S_n \subseteq S_n^+$ is proper when $n \ge 4$. That is, finite sets with at least four elements admit genuinely ``quantum permutations''. 

More generally, given a (non-commutative) finite dimensional C$^\ast$-algebra $B$, one can ask about the existence of a universal object within the category of compact quantum groups acting on $B$.  This question was also considered  by Wang \cite{Wa98}, who showed that such an object exists provided we fix a faithful state $\psi:B \to \C$ and restrict to the subcategory of compact quantum groups acting on $B$ which preserve the state $\psi$.  The resulting object is called the \textit{quantum automorphism group} of the pair $(B,\psi)$, and is denoted by $\G_{\text{aut}}(B, \psi)$.  When $B = C(X_n)$ and $\psi$ is the uniform probability measure on $X_n$, we recover the quantum permutation group $S_n^+$. See Example \ref{ex:qperm}. 

Since the quantum groups $\G_{\text{aut}}(B, \psi)$ generally depend a great deal on the initial choice of state $\psi$, a natural first step in attempting to develop a structure theory for these objects is to restrict attention to certain ``canonical'' choices of states on $B$ (which should ideally correspond to uniform probability measures in the commutative case).   In \cite{Ba99,Ba02}, Banica showed that one very natural choice of state $\psi:B \to \C$, which ensures that the generators of the category of finite dimensional representations of $\G = \G_{\text{aut}}(B,\psi)$ satisfy the fewest relations (or are are as ``free'' as possible), is to  take $\psi$ to be the restriction to $B$ of the unique trace on $\mc L(B)$, where $B \hookrightarrow \mc L(B)$ via the left regular representation.  In particular, with the above choice of $\psi$ (or more generally by taking $\psi$ to be a \textit{$\delta$-form on $B$} -- see Definition \ref{defn_delta_form}), Banica showed that when $\dim B \ge 4$, the finite dimensional representation theory of $\G$ admits a nice description using Temperley-Lieb algebras.   Using this link with Temperley-Lieb algebras, he obtained a complete description of the fusion rules for the irreducible unitary representations of $\G$ and from this deduced many interesting structure results, including the fact that $\G$ is co-amenable if and only if $\dim B \le 4$.  

The goal of this paper is to investigate the operator algebraic structure of the quantum groups $\G = \G_{\text{aut}}(B,\psi)$, where $\psi:B \to \C$ is the canonical trace described above.  Our focus will be on properties of the reduced C$^\ast$-algebras $C_r(\G)$ and reduced von Neumann algebras $L^\infty(\G) = C_r(\G)''$, as well as the quantum convolution algebras $L^1(\G) = L^\infty(\G)_*$.  Note that in the co-amenable (i.e., $\dim B \le 4$) case, the structure of $\G$ is already well understood.  If $\dim B = n \le 3$, then $B = C(X_n)$ and $\G = S_n$.  If $\dim B =4$, then either $B =M_2(\C)$ or $C(X_4).$  In the first case, $\G$ turns out to be isomorphic to $SO(3)$, the classical $\ast$-automorphism group of $M_2(\C)$ (see \cite{Ba99}).  In the second case, $\G = S_4^+$, which has been extensively studied in \cite{BaBi} and \cite{BaCo08}.  In particular,  Banica and Collins \cite{BaCo08} have obtained a concrete model for the reduced (= full, due to co-amenability) C$^\ast$-algebra $C_r(S_4^+)$ in the form of an explicit embedding $C_r(S_4^+) \hookrightarrow M_4(\C) \otimes C(SU(2))$. 

In the non-co-amenable (i.e., $\dim B \ge 5$) regime, much less is known about the structure of $\G$ and its operator algebras.  Among the known results, we note that De Rijdt and Vander Vennet \cite{DeVa} have obtained a detailed description of the probabilistic boundaries associated to the discrete dual quantum group $\hG$.  In addition, combinatorial ``Weingarten'' formulas for the Haar state on the quantum permutation groups $S_n^+$ were obtained in \cite{BaCo07}. The large $n$ asymptotics of these formulas have found many interesting applications in free probability theory. See for example \cite{BaCuSp, BaCuSp2, KoSp}.  Perhaps most striking of these applications is K\"ostler and Speicher's free analogue of de Finetti's theorem \cite{KoSp}, which asserts that an infinite sequence of random variables in a W$^\ast$-probability space is identically distributed and free with amalgamation if and only if its joint $\ast$-distribution is invariant under the action of the quantum permutation groups $\{S_n^+\}_{n\ge 1}$.     

Returning now to a fixed quantum automorphism group $\G = \G_{\text{aut}}(B,\psi)$ with $\dim B \ge 5$, basic results in co-amenability theory imply that $C_r(\G)$ is non-nuclear, $L^\infty(\G)$ is non-injective and $L^1(\G)$ is a Banach algebra which fails to have a bounded approximate identity. However, based on the above connections between $S_n^+$ and free independence, as well as some compelling evidence provided by work of Vaes and Vergnioux \cite{VaVe} on free orthogonal quantum groups, more is conjectured to be true.   Namely, it is expected that $L^\infty(\G)$ is a full, prime type II$_1$-factor and $C_r(\G)$ is a simple C$^\ast$-algebra with unique trace when $\dim B \ge 5$  (see \cite{BaCo08},  Introduction).

In this paper we show that if $\dim B \ge 8$, then $L^\infty(\G)$ is a full type II$_1$-factor (Theorem \ref{thm:factoriality_tracial}) and $C_r(\G)$ is simple with unique trace (Corollary \ref{cor:simplicity}) -- verifying the above conjecture in all but six cases.  We also prove that $L^\infty(\G)$ is a solid von Neumann algebra (Corollary \ref{cor:solidity_prime}): the relative commutant of any diffuse injective von Neumann subalgebra of $L^\infty(\G)$ is injective.  Combining this with the above factoriality result, we conclude that $L^\infty(\G)$ is a prime factor as soon as $\dim B \ge 8$.  The solidity of $L^\infty(\G)$ is established by proving that $L^\infty(\G)$ has property (AO)$^+$ (Theorem \ref{thm:AO}), which implies solidity by a celebrated result of Ozawa \cite{Oz}.  See Section \ref{section:AO} for the relevant details and definitions.  Our proof of property (AO)$^+$ follows very closely the method of Vergnioux \cite{Ve05}, where property (AO)$^+$ was established for universal discrete quantum groups.                 

In addition to considering the algebraic structure of $L^\infty(\G)$ and  $C_r(\G)$, we also investigate various approximation properties for these operator algebras and the convolution algebra $L^1(\G)$. We prove that when $\dim B \ge 5$, $L^\infty(\G)$ has the Haagerup property (Theorem \ref{thm:HAP_Kac}), $C_r(\G)$ (respectively $L^\infty(\G)$) has the (respectively weak$^\ast$) metric approximation property, and that $L^1(\G)$ admits a central approximate identity which is contractive in the multiplier-norm (Theorem \ref{thm:further_approx}).  To obtain these approximation properties, we first construct a suitable family of $L^2$-compact  completely positive multipliers on $L^\infty(\G)$, establishing the Haagerup property.  We then prove that the discrete dual quantum group $\hG$ has the property of rapid decay, which amounts to proving a type of Haagerup inequality for $L^\infty(\G)$  (Theorem \ref{thm:RD}).  The norm control provided by the property of rapid decay for $\hG$ allows us to truncate our completely positive multipliers down to finite rank contractions in such a way as to yield the (weak$^\ast$) metric approximation property.  (We also use the property of rapid decay in the proof of simplicity for $C_r(\G)$).  The existence of the claimed approximate identity for $L^1(\G)$ then follows by a standard duality argument.  We also observe that $C_r(\G)$ is an exact C$^\ast$-algebra (Corollary \ref{cor:exactness}), by piecing together some results on monoidal equivalence for compact quantum groups from \cite{DeVa} and \cite{VaVe}.

The remaining parts of this paper are organized as follows: Section \ref{section:prelims} fixes some notation and contains the basic facts on compact and discrete quantum groups that will be needed.  In Section \ref{section:QAG} we recall the definition of the quantum automorphism groups $\G = \G_{\text{aut}}(B,\psi)$, $\delta$-forms  and describe the representation theory of $\G$ when $\psi$ is a $\delta$-form.  Section \ref{section:AP} deals with approximation properties for the reduced operator algebras of $\G$ (assuming $\psi$ is the canonical trace on $B$) and the property  of rapid decay for $\hG$.
In Section \ref{section:AS} we prove our factoriality, fullness and  and simplicity results for the reduced operator algebras of $\G$.  We end Section \ref{section:AS} by establishing property (AO)$^+$
for $L^\infty(\G)$, deducing solidity and primeness results from this. The final section (Section \ref{section:appendix}) is an appendix containing a proof of Theorem \ref{thm:decomp_T}, which is a technical result required in our study of factoriality and fullness for $L^\infty(\G)$.   
      
\subsection*{Acknowledgements}  The author wishes to thank T.  Banica, B. Collins, P. Fima, A. Freslon, J. A. Mingo, R. Speicher and R. Vergnioux for useful conversations and comments at various stages of this project.      

\section{Preliminaries} \label{section:prelims}

\subsection{Notation}
For a Hilbert space $H$, we take its inner product to be conjugate-linear in the first variable and write $\mc B(H)$ for the C$^\ast$-algebra of bounded linear operators on $H$.  Given $\xi, \eta \in H$, we write $\omega_{\eta,\xi} \in \mc B(H)_*$ for the $\sigma$-weakly continuous linear functional $T \mapsto \langle \eta|T\xi \rangle$.   For tensor products, we write $\otimes$ for the minimal tensor product of C$^\ast$-algebras or Hilbertian tensor product of Hilbert spaces, $\overline{\otimes}$ for the spatial tensor product of von Neumann algebras and $\otimes_{\text{alg}}$ for the algebraic tensor product.  If $X$ is an operator space,  we write $\mc {CB}(X)$ for the algebra of completely bounded linear maps on $X$.   We take the natural numbers $\N$ to include $0$ and for $m \ge 1$ we write $[m]:=\{1,2, \ldots, m \}$.  We also use standard leg numbering notation for operators on multiple tensor products.  For example, if $H_1, H_2, H_3$ are Hilbert spaces and $T \in \mc B(H_1 \otimes H_2)$, then $T_{12} = T \otimes \id_{H_3} \in \mc B(H_1 \otimes H_2 \otimes H_3)$, $T_{13} =  (\id \otimes \sigma^*)T_{12} (\id \otimes \sigma) \in \mc B(H_1 \otimes H_3 \otimes H_2)$ where $\sigma:H_3 \otimes H_2 \to H_2 \otimes H_3$ is the tensor flip map and $T_{23}$ is defined similarly. 

\subsection{Compact and Discrete Quantum Groups}  We present here a very brief summary of the basic facts on compact and discrete quantum groups that will be needed for this paper.  Our main reference for this will be \cite{Wo2} and the excellent book \cite{Ti}.

\begin{defn} \label{defn:CQG}
A \textit{compact quantum group} is a pair $\G = (A,\Delta)$, where $A$ is a unital C$^\ast$-algebra and $\Delta:A \to A \otimes A$ is a unital $\ast$-homomorphism (called a \textit{coproduct}) satisfying the coassociativity condition
\[(\id_A \otimes \Delta) \circ \Delta = (\Delta \otimes \id_A) \circ \Delta \]
and the cancellation property \[ \overline{\text{span}}\{\Delta(A)(1_A \otimes A)\} = A \otimes A = \overline{\text{span}}\{\Delta(A)(A \otimes 1_A)\}.\]
\end{defn} 

Every compact quantum group $\G = (A,\Delta)$ admits a \textit{Haar state}, which is a state $h:A \to \C$ defined uniquely by the following bi-invariance condition with respect to the coproduct: \[(h \otimes \id_A)\Delta (a) = (\id_A \otimes h)\Delta(a) = h(a)1_A \qquad (a \in A).\]
Let $L^2(\G)$ be the Hilbert space obtained from the GNS construction applied to the sesquilinear form on $A$ defined by $\langle a|b \rangle_h = h(a^*b)$, and let $\lambda:A \to \mc B(L^2(\G))$ be the associated \textit{left regular representation}.  Recall that $\lambda$ is given by $\lambda(a)\Lambda_h(b) = \Lambda_h(ab)$, where $a,b \in A$ and $\Lambda_h:A \to L^2(\G)$ is the GNS map.  The \textit{reduced C$^\ast$-algebra} and \textit{reduced von Neumann algebra} of $\G$ are the concrete operator algebras acting on $L^2(\G)$ given by \[C_r(\G) := \lambda(A) \subseteq \mc B(L^2(\G)) \qquad \text{and} \qquad L^\infty(\G) := C_r(\G)'',\] respectively.  

Observe that the $\Delta$-invariance of the Haar state $h$ implies that there exists a normal faithful $\ast$-homomorphism $\Delta_r:L^\infty(\G) \to L^\infty(\G) \overline{\otimes} L^\infty(\G)$ given by $\Delta_r \circ \lambda = (\lambda \otimes \lambda) \circ \Delta$, turning $(C_r(\G),\Delta_r)$ into a compact quantum group (the \textit{reduced} version of $\G$).  The pre-adjoint of $\Delta_r$ induces a completely contractive Banach algebra structure on the predual $L^1(\G) := L^\infty(\G)_*$. $L^1(\G)$ is called the \textit{convolution algebra} of $\G$.  $\G$ is said to be of \textit{Kac-type} if the Haar state $h$ is tracial.  In this case, $L^\infty(\G)$ is a finite von Neumann algebra with faithful Haar trace $h$.

Let $H$ be a finite dimensional Hilbert space.  A  \textit{representation} of $\G$ on $H$ is an element $U \in \mc B(H) \otimes A$ such that \begin{align} \label{eqn:rep}(\id \otimes \Delta)U = U_{12}U_{13}.
\end{align}  By fixing an orthonormal basis $\{e_i\}_{i=1}^d$ for $H$ and writing $U = [u_{ij}] \in M_d(A)$ relative to this basis,  \eqref{eqn:rep} is equivalent to requiring that $\Delta(u_{ij}) = \sum_{k=1}^d u_{ik} \otimes u_{kj}$ for $1 \le i,j \le d.$
$U$ is called a \textit{unitary representation} if $U$ is, in addition, a unitary element of $M_d(A)$.  If $U^1 = [u^1_{i(1)j(1)}] \in M_{d(1)}(A)$ and $U^2 = [u^2_{i(2)j(2)}] \in M_{d(2)}(A)$ are (unitary) representations, then their \textit{tensor product} $U^1\boxtimes U^2 = [u^1_{i(1)j(1)}u^2_{i(2)j(2)}] \in M_{d(1)d(2)}(A)$ and \textit{direct sum} $U^1 \oplus U^2 \in M_{d(1)+d(2)}(A)$ are also (unitary) representations.  The vector space \[\Mor(U^1,U^2) = \{T \in \mc B(H_1,H_2) \ | \ (T \otimes 1_A)U^1 = U^2(T \otimes 1_A)\},\]
is called the space of \textit{morphisms} (or \textit{intertwiners}) between $U^1$ and $U^2$.  $U^1$ and $U^2$ are called \textit{(unitarily) equivalent} if there exists an invertible (unitary) operator $T \in \Mor(U^1,U^2)$, and in this case we write $U^1 \cong U^2$.  If $U = [u_{ij}] \in M_d(A)$ is a representation, then so is $\overline{U} = [u^*_{ij}]$.  The representation $\overline{U}$ is called the \textit{conjugate} of $U$.  Note that the passage from $U$ to its conjugate $\overline{U}$ does not preserve unitarity in general.  A representation  $U$ is called \textit{irreducible} if $\Mor(U,U) = \C \id$.  Note that $U$ is irreducible if and only if $\overline{U}$ is irreducible.    

The following is a fundamental result in the theory of compact quantum groups. 

\begin{thm} \label{thm_reducibility}
Every irreducible representation of a compact quantum group $\G  = (A,\Delta)$ is finite dimensional and equivalent to a unitary one if it is invertible.  Moreover, every unitary representation is unitarily equivalent to a direct sum of irreducibles.
\end{thm}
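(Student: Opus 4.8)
The plan is to derive both assertions from the bi-invariance of the Haar state $h$ together with the representation identity $(\id\otimes\Delta)U=U_{12}U_{13}$. The single workhorse is an \emph{averaging} construction that produces intertwiners by slicing with $\id\otimes h$: for unitary representations $U^1,U^2$ and any $S\in\mc B(H_1,H_2)$, the operator $T:=(\id\otimes h)\big(U^2(S\otimes1)(U^1)^*\big)$ lies in $\Mor(U^1,U^2)$. I would check this by applying $\id\otimes\id\otimes h$ to $(\id\otimes\Delta)\big(U^2(S\otimes1)(U^1)^*\big)$, expanding each $(\id\otimes\Delta)U^i=U^i_{12}U^i_{13}$, and invoking left-invariance $(h\otimes\id)\Delta=h(\cdot)1$; the whole proof then orchestrates this one device.

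For the unitarizability claim, let $U$ be an invertible representation on a finite-dimensional $H$ and set $G:=(\id\otimes h)(U^*U)$. Then $G\ge0$ because $U^*U\ge0$ and $\id\otimes h$ is positive, and $G$ is invertible because invertibility of $U$ gives $U^*U\ge\varepsilon(1\otimes1)$ for some $\varepsilon>0$, whence $G\ge\varepsilon\,\id$. The crucial point is the identity $U^*(G\otimes1)U=G\otimes1$, which I would obtain by computing $(\id\otimes h\otimes\id)\big((\id\otimes\Delta)(U^*U)\big)$ in two ways: left-invariance of $h$ collapses it to $G\otimes1$, while expanding $(\id\otimes\Delta)(U^*U)=U_{13}^*U_{12}^*U_{12}U_{13}$ and integrating out the middle leg returns $U^*(G\otimes1)U$. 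Given this identity, $W:=(G^{1/2}\otimes1)U(G^{-1/2}\otimes1)$ satisfies $W^*W=1\otimes1$; since $W$ is invertible this forces $W^*=W^{-1}$, so $W$ is a unitary representation equivalent to $U$.

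For the decomposition claim, let $U$ be a unitary representation on $H$. A one-line manipulation using $U^*U=UU^*=1\otimes1$ shows that $\Mor(U,U)$ is closed under taking adjoints, so it is a finite-dimensional $C^\ast$-algebra. I would then write its unit as a sum $1=\sum_i p_i$ of minimal projections; each $p_i$ is an intertwiner, so $p_iH$ carries a subrepresentation whose intertwiner algebra is the minimal corner $p_i\Mor(U,U)p_i=\C p_i$ --- that is, an irreducible. This realizes $U$ as an orthogonal direct sum of irreducibles.

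It remains to address finite-dimensionality of irreducibles. Under the standing convention of the excerpt --- representations act on finite-dimensional Hilbert spaces --- there is nothing to prove. In the unrestricted formulation one argues that for a rank-one projection $\theta$ the positive intertwiner $(\id\otimes h)\big(U(\theta\otimes1)U^*\big)$ is compact; irreducibility then forces it to be a nonzero scalar multiple of the identity, which is compact only when $\dim H<\infty$. I expect the compactness of this averaged operator to be the genuine analytic obstacle in the general setting, whereas within the present finite-dimensional framework the only real care needed is the leg-numbering bookkeeping in the two averaging identities of the previous paragraphs.
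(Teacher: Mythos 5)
Your proposal is correct, but note that the paper does not prove Theorem \ref{thm_reducibility} at all: it is recalled as a fundamental background fact with references to \cite{Wo2} and \cite{Ti}, and your argument --- the Haar-state averaging device, unitarization via $G=(\id\otimes h)(U^*U)$ together with the identity $U^*(G\otimes 1)U=G\otimes 1$, and decomposition through minimal projections of the finite-dimensional C$^\ast$-algebra $\Mor(U,U)$ --- is precisely the standard proof found in those references. You are also right that the finite-dimensionality clause is vacuous under this paper's conventions (representations are defined on finite-dimensional spaces), and that the compactness argument you sketch is only needed in the more general formulation.
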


Let $\{U^\alpha = [u^\alpha_{ij}]_{1 \le i,j \le d_\alpha} : \alpha \in \Irr(\G) \}$ be a maximal family of pairwise inequivalent finite-dimensional irreducible unitary representations of $\G = (A, \Delta)$, with $0 \in \Irr(\G)$ denoting the index corresponding to the trivial representation $1_A \in A$ and $U^{\overline{\alpha}}$ denoting the representative of the equivalence class of $\overline{U^\alpha}$.  If $\mc A \subset A$ denotes the linear span of $\{u^\alpha_{ij}: \ 1 \le i,j \le d_\alpha, \alpha \in \Irr(\G) \}$, then $\mc A$ is a Hopf $\ast$-algebra on which $h$ is faithful, $\mathcal A$ is norm dense in $A$, and the set $\{u^\alpha_{ij}: \ 1 \le i,j \le d_\alpha, \alpha \in \Irr(\G) \}$ is a linear basis for $\mc A$.  The coproduct $\Delta:\mc A \to \mc A \otimes_{\textrm{alg}} \mc A $ is just the restriction $\Delta|_{\mc A}$,  the \textit{coinverse} $\kappa: \mc A \to \mc A$ is the antihomomorphism given by 
$\kappa(u_{ij}^\alpha)= (u_{ji}^\alpha)^*$,
and the \textit{counit} $\epsilon:\mc A \to \C$ is the $\ast$-character given by $\epsilon(u_{ij}^\alpha) = \delta_{ij}$.
The Hopf $\ast$-algebra $\mc A$ is unique in the sense that if $\mathcal B \subseteq A$ is any other dense Hopf $\ast$-subalgebra, then $\mathcal B = \mathcal A$ (see \cite[Theorem 5.1]{BeMuTu0}).    

For each $\alpha \in \Irr(\G)$, fix an isometric morphism $t_{\alpha} \in \Mor(1, U^\alpha \boxtimes U^{\overline{\alpha}})$ (which exists and is unique up to scalar multiplication by $\T$).  Let $j_\alpha:H_\alpha \to H_{\overline{\alpha}}$ be the invertible conjugate-linear
map defined by $j_{\alpha}\xi = (\xi^*\otimes \id)t_\alpha$, $(\xi \in H_\alpha)$.  Renormalize $j_\alpha$ so that the positive operator $Q_\alpha := j_\alpha^*j_\alpha > 0$ satisfies $\Tr(Q_\alpha) = \Tr(Q_{\alpha}^{-1})$ and $j_{\overline{\alpha}}j_\alpha = \pm \id_{H_\alpha}$.  The operators $\{Q_\alpha\}_{\alpha \in \Irr(\G)}$ can then be used to describe the  \textit{Schur orthogonality relations} for the matrix elements of irreducible unitary representations of $\G$, which are given by  \begin{align} \label{eqn:Haar} h((u_{ij}^\alpha)^*u_{kl}^\beta) = \frac{\delta_{\alpha\beta}\delta_{jl}(Q_{\alpha}^{-1})_{ki}}{\Tr Q_\alpha} \quad \text{and} \quad h(u_{ij}^\alpha(u_{kl}^\beta)^*) = \frac{\delta_{\alpha\beta}\delta_{ik}(Q_{\alpha})_{lj}}{\Tr Q_\alpha},  \end{align} where $\alpha, \beta \in \Irr(\G)$ and $1 \le i,j \le d_\alpha, 1 \le k,l \le d_\beta$.  In particular, setting  $L^2_\alpha(\G) = \text{span}\{\Lambda_h(u_{ij}^\alpha): 1 \le i,j \le d_\alpha\}$, we obtain the $L^2(\G)$-decomposition \begin{align} \label{eqn:L2decomp} L^2(\G) = \bigoplus_{\alpha \in \Irr(\G)} L^2_\alpha(\G) \cong \bigoplus_{\alpha \in \Irr(\G)} H_\alpha \otimes H_{\overline{\alpha}}, 
\end{align}
where the second isomorphism is given on each direct summand $L^2_\alpha(\G)$ by  
\begin{align} \label{eqn:L2unitarymap}
\Lambda_h\big((\omega_{\eta, \xi} \otimes \id_{A})U^\alpha\big) \mapsto \xi \otimes (1 \otimes \eta^*)t_{\overline{\alpha}} \qquad (\alpha \in \Irr(\G), \ \xi, \eta \in H_\alpha).
\end{align} Note that when $\G$ is of Kac-type, then $Q_\alpha = \id_{H_\alpha}$ for each $\alpha \in \Irr(\G)$ and $\{d_\alpha^{1/2}\Lambda_h(u_{ij}^\alpha) : 1 \le i,j \le d_\alpha\}$ is then an orthonormal basis for $L^2_\alpha(\G)$.  For future reference, we will always write $P_\alpha$ for the orthogonal projection from $L^2(\G)$ onto $L^2_\alpha(\G) \cong H_\alpha \otimes H_{\overline{\alpha}}$. 

Denote by $C_u(\G)$ the universal enveloping C$^\ast$-algebra of the Hopf $\ast$-algebra $\mc A$ and by $\pi_u:\mc A \to C_{u}(\G)$ the universal representation.  Then there is a $\ast$-homomorphism $\Delta_u:C_u(\G) \to C_u(\G) \otimes C_u(\G)$ defined by $\Delta_u \circ \pi_u = (\pi_u \otimes \pi_u) \circ \Delta$, making $(C_u(\G), \Delta_u)$ a compact quantum group (the \textit{universal} version of $\G$).  In the universal setting, we write $C_{\text{alg}}(\G)$ for the canonical dense Hopf $\ast$-subalgebra $\pi_u(\mc A) \subseteq C_u(\G)$.  We say that $\G$ is \textit{co-amenable} if the regular representation $\lambda:C_u(\G) \to C_r(\G)$ is an isomorphism.  The compact quantum groups encountered in this paper will all appear canonically in universal form.  I.e., given by $\G = (A,\Delta)$ where $A = C_u(\G)$ and $\Delta = \Delta_u$.

A \textit{discrete quantum group} is the dual a compact quantum group.  Let $\G = (A,\Delta)$ be a compact quantum group and let $\{U^\alpha\}_{\alpha \in \Irr(\G)}$ (with $U^\alpha \in \mc B(H_\alpha) \otimes A)$ be a maximal family of pairwise inequivalent irreducible unitary representations of $\G$.  Define \[\C[\hG] = \bigoplus_{\alpha \in \Irr(\G)}\mc B(H_\alpha), \qquad  c_0(\hG) = c_0 -  \bigoplus_{\alpha \in \Irr(\G)}\mc B(H_\alpha),   \qquad \ell^\infty(\hG) =\prod_{\alpha \in \Irr(\G)} \mc B(H_\alpha), \] and let $\hat P_\alpha \in \ell^\infty(\hG)$ denote the minimal central projection corresponding to the factor $\mc B(H_\alpha) \in \ell^\infty(\hG)$.     

The unitary operator 
\begin{align} \label{eqn:multunitary}
\mathbb V = \bigoplus_{\alpha \in \Irr(\G)} U^\alpha \in M(c_0(\hG) \otimes A), 
\end{align}
where $M(B)$ denotes the multiplier algebra of a C$^\ast$-algebra $B$, implements the duality between $\G$ and $\hG$.  There is a \textit{dual coproduct} $\hat \Delta: \ell^\infty(\hG) \to \ell^\infty(\hG) \overline{\otimes} \ell^\infty(\hG)$ determined by \[(\hat \Delta \otimes \id_A) \mathbb V = \mathbb V_{13}\mathbb V_{23},\] or equivalently by the relation $\hat \Delta(x)S = S x$ for each $x \in \ell^\infty(\hG)$, $S \in \Mor(U^\alpha, U^\beta \boxtimes U^\gamma)$.  The left and right invariant \textit{Haar weights} on $\ell^\infty(\hG)$,  $\hat h_L$ and $\hat h_R$, are given by \begin{align} \label{eqn:weights}
\hat h_L(x) = \sum_{\alpha \in \Irr(\G)} m_\alpha \Tr(Q_\alpha \hat P_\alpha x), \qquad \hat h_R(x) = \sum_{\alpha \in \Irr(\G)} m_\alpha \Tr(Q_\alpha^{-1} \hat P_\alpha x) \qquad (x \in \C[\hG]), 
\end{align}  
where $m_\alpha = \Tr(Q_\alpha) = \Tr(Q_\alpha^{-1})$ denotes the \textit{quantum dimension} of $H_\alpha$.   Finally, we note that one can show that $\hat \Delta (Q) = Q \otimes Q$, where $Q = \sum_{\alpha \in \text{Irr}(\G)} \hat P_\alpha Q_\alpha$ is the positive unbounded multiplier of $c_0(\hG)$ associated to the family of matrices $\{Q_\alpha\}_{\alpha \in \text{Irr}(\G)}$.

\section{Quantum Automorphism Groups of Finite Dimensional C$^\ast$-algebras.} \label{section:QAG}

We now turn to the central objects of this paper -- quantum automorphism groups of finite dimensional C$^\ast$-algebras.  To define these objects, we first recall the notion of an action of a compact quantum group on a unital C$^\ast$-algebra.   

\begin{defn} \label{defn:action}
Let $B$ be a unital C$^\ast$-algebra and $\G= (A,\Delta)$ a compact quantum group.  
\begin{enumerate}
\item A \emph{right action} of $\G$ on $B$ is a unital $\ast$-homomorphism $\alpha:B \to B \otimes A$ satisfying \[(\alpha \otimes \id)\alpha = (\id \otimes \Delta) \alpha \qquad \text{and} \qquad \overline{\text{span}}\{\alpha(B)(1 \otimes A)\} = B \otimes A. \]
\item If $\psi:B \to \C$ is a positive linear functional on $B$, a \emph{right action} $\alpha$ of $\G$ on the pair $(B,\psi)$ is a right action of $\G$ on $B$  which is also $\psi$-invariant.  That is, \[(\psi \otimes \id_{A})  \alpha(b) = \psi(b)1_{A}, \qquad (b \in B).\]
\end{enumerate}
\end{defn}

\begin{defn} (\cite{Wa98,Ba99,Ba02}) \label{defn:QAG}
Let $(B, \psi)$ be a finite dimensional C$^\ast$-algebra equipped with a faithful state $\psi$.  The \emph{quantum automorphism group of $(B, \psi)$}, denoted by $\G = \G_{\text{aut}}(B, \psi)$, is the universal quantum group defined by a right action of $\G$ on $(B,\psi)$, say $\alpha:B \to B \otimes C_u(\G)$,  with the following properties:
\begin{enumerate}
\item $C_u(\G)$ is defined as the universal C$^\ast$-algebra generated by \[\{(\omega \otimes \id)\alpha(a): \omega \in B^*, a \in B\}.\]
\item The action $\alpha$ is \emph{universal} in the sense that if $\beta:B \to B \otimes A'$ is an action of another compact quantum group $\G' = (A',\Delta')$ on $(B,\psi)$, then there exists a unital $\ast$-homomorphism $\pi:C_u(\G) \to A'$ such that $\beta = (\id_B \otimes \pi)\alpha$.
\end{enumerate}
\end{defn}

\begin{rem} \label{rem:QAG}
As was shown in \cite{Wa98, Ba99, Ba02}, it is possible to give a more ``concrete'' description of the quantum group $\G = \G_{\text{aut}}(B,\psi)$ in terms of generators and relations.  Let $n = \dim B$ and fix an orthonormal basis $\{b_i\}_{i=1}^n$ for $B$ relative to the inner product $\langle \cdot | \cdot \rangle_\psi$  induced by $\psi$.  Let $m:B \otimes B \to B$ be the multiplication map and $\nu:\C \to B$ the unit map.  Then $C_u(\G)$ is the universal C$^\ast$-algebra with $n^2$ generators $\{u_{ij}: 1 \le i,j \le n\}$ subject to the relations which make \begin{align} \label{eqn:relationsQAG} U = [u_{ij}] \in M_n(C_u(\G)) \text{ unitary}, \quad  m \in \Mor(U^{\boxtimes 2}, U) \quad \text{and} \quad \nu \in \Mor(1, U).\end{align}
The universal property of $C_u(\G)$ then allows us to define a coproduct $\Delta:C_u(\G) \to C_u(\G) \otimes C_u(\G)$ by the formula \[ \Delta(u_{ij}) = \sum_{k=1}^n u_{ik}\otimes u_{kj} \qquad (1 \le i,j \le n),\] which turns $(C_u(\G), \Delta)$ into a compact quantum group with fundamental representation $U$ acting on the Hilbert space $H = (B, \langle \cdot|\cdot \rangle_{\psi})$.  The universal action $\alpha$ of $\G$ on $(B,\psi)$ is uniquely determined by $\alpha(b_i) = \sum_{j=1}^n b_j \otimes u_{ji}$, $(1 \le i \le n)$. 
\end{rem}

\begin{ex} \label{ex:qperm} Consider the simplest situation where $X_n = \{1, \ldots, n\}$, $B = C(X_n)$ and $\psi$ is the uniform probability measure on $X_n$.  Taking as an orthogonal basis the $n$ standard projections in $C(X_n)$, it can be shown using Remark \ref{rem:QAG} that $C_u(\G)$ is the universal C$^\ast$-algebra generated by $n^2$ projections $\{u_{ij}: 1 \le i,j \le n\}$ with the property that the rows and columns of the matrix $U = [u_{ij}]$ form partitions of unity.  (An $n \times n$ matrix $U$ over a C$^\ast$-algebra $A$ satisfying the above properties is called a \textit{magic unitary}.)  On the other hand, $C_u(S_n^+)$, the universal C$^\ast$-algebra associated to the quantum permutation group $S_n^+$, admits the same description by \cite{Wa98}, Theorem 3.1.  Therefore $\G_{\text{aut}}(C(X_n),\psi) = S_n^+$.
\end{ex}

As mentioned in the introduction, this paper deals almost exclusively  with finite dimensional C$^\ast$-algebras equipped with certain canonical tracial states, called  \textit{tracial $\delta$-forms} \cite{Ba99,Ba02}.    

\begin{defn}\label{defn_delta_form}
Let $B$ be a finite dimensional C$^\ast$-algebra equipped with a faithful state $\psi:B \to \C$ and  let $\delta >0$.  We call $\psi$ a \emph{$\delta$-form} if for the inner products on $B$ and $B \otimes B$ implemented by $\psi$ and $\psi\otimes \psi$, respectively, the multiplication map $m:B \otimes B \to B$ satisfies $mm^* = \delta^2 \id_B$.  
\end{defn}

The reason for considering $\delta$-forms is two-fold. On the one hand, the class of $\delta$-forms yields many of the interesting concrete examples of quantum automorphism groups (including quantum permutation groups -- see Example \ref{exs:exs} below).   On the other hand, it turns out that the concrete monoidal W$^\ast$-category of finite dimensional unitary representations of $\G_{\text{aut}}(B,\psi)$ admits a very tractable description in this setting (see Section \ref{sect:reptheory}).

\begin{rem} \label{rem:m_nu_relns} Let $\psi$ be a $\delta$-form on  a finite dimensional C$^\ast$-algebra $B$.  One can readily verify (see for example \cite{Ba02}, Theorem 1) that in addition to usual relations \[ mm^* = \delta^2\id_B, \quad \nu^*\nu = 1, \quad m(m \otimes \id_B) = m(\id_B \otimes m), \quad  m(\id_B \otimes \nu)= m(\nu \otimes \id_B) = \id_B \] satisfied by the multiplication map $m:B \otimes B \to B$ and the unit map $\nu:\C \to B$, the additional useful relation
\begin{align*}
m^*m = (m \otimes \id_B)(\id_B \otimes m^*),
\end{align*} 
also holds.
\end{rem}
We now consider some examples.
\begin{ex} \label{exs:exs}
\begin{enumerate}
\item If $B = C(X_n)$ where $X_n = \{1,2, \ldots, n\}$ and $\psi$ is the uniform probability measure on $X_n$, then $\psi$ is a $\delta$-form with $\delta^2 = \dim B = n$.  Note that by Example \ref{ex:qperm}, $\G_{\text{aut}}(B,\psi) = S_n^+$.  
\item If $B = M_n(\C)$ and $\psi = \text{Tr}(Q \cdot)$ with $Q > 0$, then $\psi$ is a $\delta$-form with $\delta^2 = \text{Tr}(Q^{-1})$.
\item \label{generic} If $(B, \psi)$ is arbitrary, then $B$ decomposes as the direct sum $B = \bigoplus_{i=1}^k M_{n_i}(\C)$ and $\psi$ can be written as $\psi = \bigoplus_{i=1}^k \text{Tr}(Q_i \cdot)$ with $Q_i > 0$.  Then $\psi$ is a $\delta$-form if and only if $\text{Tr}(Q_i^{-1}) = \delta^2$ for $1 \le i \le k$.  See \cite{Ba02}.
\end{enumerate}
\end{ex}

Using the notation from Example \ref{exs:exs}\eqref{generic} and the  inequality $\dim B \le \sum_{1 \le i \le k} \Tr(Q_i)\Tr(Q_i^{-1})$, it follows that $\delta^2 \ge \dim B$ for any $\delta$-form $\psi$.  If $\psi$ is moreover a trace, then $\delta^2 = \dim B$ and $\psi$ is precisely the restriction to $B$ of the unique trace on $\mc L(B)$, described in Section \ref{section:intro}.   In this case, we call $\psi$ the \textit{$\delta$-trace} or \textit{tracial $\delta$-form} on $B$.  Note that when $\psi$ is a trace, $\G_{\text{aut}}(B,\psi)$ is a compact quantum group of Kac-type \cite{Wa98}.        

We remind the reader that for $n= \dim B \le 3$, $\G = \G_{\text{aut}}(B,\psi)$ is just the usual permutation group $S_n$ \cite{Wa98}, so for the remainder of the paper we will assume that $\dim B \ge 4$. 

\subsection{Representation Theory and the $2$-cabled Temperley-Lieb Category} \label{sect:reptheory}  

Let $\delta \ge 2$ and $\psi$ be a $\delta$-form on a finite dimensional C$^\ast$-algebra $B$.  In this section we review the description of the representation theory of $\G = \G_{\text{aut}}(B, \psi)$ obtained by Banica in \cite{Ba99, Ba02}.  In the following paragraphs it will be convenient to use the language of (concrete)  monoidal W$^\ast$-categories.  We refer to \cite{Wo88} and \cite{GhLiRo} for the appropriate definitions. 

In \cite{Ba02} (see \cite{Ba99} for the tracial case) it is shown that the fundamental representation $U$ of $\G$ is equivalent to $\overline{U}$, and therefore the concrete monoidal W$^\ast$-category $R$ of all finite dimensional unitary representations of $\G$ is the completion of the concrete monoidal W$^\ast$-category \[R_0 = \big\{ \{U^{\boxtimes k}\}_{k \in \N}, \{\Mor(U^{\boxtimes k}, U^{\boxtimes l})\}_{k,l \in \N} \big\},\]
whose objects are the tensor powers $U^{\boxtimes k}$ of $U$ and whose morphisms are the associated spaces of intertwiners.  Moreover, $R_0$ is generated as  a monoidal W$^\ast$-category by the morphisms $m \in \Mor(U^{\boxtimes 2}, U)$, $\nu \in \Mor(1,U)$ and $\id_B \in \Mor(U,U)$.    

Consider the universal graded C$^\ast$-algebra $(\text{TL}_{k,l}(\delta))_{(k,l) \in \N \times \N}$ given by $\TL_{k,l}(\delta) = \{0\}$ if $k-l$ is odd, and generated by elements $t(k,l) \in \TL_{k+l, k+l+2}(\delta)$ with the following relations (where $1_n$ denotes the unit of the C$^\ast$-algebra $\TL_n(\delta):=\TL_{n,n}(\delta)$): 
\begin{align*}
t(k,l)^*t(k,l) &= 1_{k+l} \\
t(k,l+1)^*t(k+1,l) &= \delta^{-1} 1_{k+l+1} \\
t(r,k+l+2)t(r+k,l) &= t(r+k+2,l)t(r,k+l) \\
t(r,k+l+2)^*t(r+k+2,l) &= t(r+k,l)t(r,k+l)^*.
\end{align*} 
It is well known that each (necessarily finite dimensional!) vector space $\text{TL}_{k,l}(\delta)$ has the following planar-diagrammatic interpretation:  $\text{TL}_{k,l}(\delta)$ is the $\C$-vector space spanned by a basis of $(k,l)$-Temperley-Lieb diagrams $\{D_\pi\}_{\pi \in NC_2(k+l)}$. (That is, diagrams $D_\pi$ consisting of two parallel rows of points -- $k$ points on the bottom row and $l$ points on the top row -- which are connected by a non-crossing pairing $\pi \in NC_2(k+l)$).  We refer to  the book  \cite{KaLi} for details on the spaces $TL_{k,l}(\delta)$.  In terms of diagrams, the generators $t(k,l) \in \TL_{k+l,k+l+2}(\delta)$ are given by the (scaled) diagrams \[t(k,l) = \delta^{-1/2} \underbrace{\big| \ \big| \ \cdots \big|}_{\text{$k$ dots}} \ \bigcup \underbrace{\ \big| \ \cdots \big| \ \big|}_{\text{$l$ dots}}, \qquad t(k,l)^* = \delta^{-1/2} \underbrace{\big| \ \big| \ \cdots \big|}_{\text{$k$ dots}} \ \bigcap \underbrace{\ \big| \ \cdots \big| \ \big|}_{\text{$l$ dots}}. \]
The composition $D_\pi D_\sigma \in \TL_{k,l}(\delta)$ of diagrams $D_\pi \in \TL_{s,l}(\delta)$, $D_\sigma \in \TL_{k,s}(\delta)$ is obtained by the following procedure. First stack the diagram $D_\pi$ on top of $D_\sigma$, connecting the bottom row of $s$ points on $D_\pi$ to the top row of $s$ points on $D_\sigma$.  The result is a new planar diagram, which may have a certain number $c$ of internal loops.  By removing these loops, we obtain a new diagram $D_\rho \in \TL_{k,l}(\delta)$.  The product $D_\pi D_\sigma \in \TL_{k,l}(\delta)$ is then defined to be $\delta^c D_\rho$.  The involution $D_\pi \mapsto D_\pi^*$ is just the conjugate linear extension of the operation of turning diagrams upside-down.

The collection $\TL(\delta) = \{\N, \{\TL_{k,l}(\delta)\}_{k,l \in \N}\}$ has a natural structure as a monoidal W$^\ast$-category and is called the {\it Temperley-Lieb category} (with parameter $\delta$).  Related to this is the \textit{$2$-cabled Temperley-Lieb category} $\TL^2(\delta) = \{\N, \{\TL_{k,l}^2(\delta)\}_{k,l \in \N}\}$, where $\TL_{k,l}^2(\delta):=TL_{2k,2l}(\delta)$ for each $k,l \in \N$.  A fundamental result of \cite{Ba02} is that there is an isomorphism of  monoidal W$^\ast$-categories \[\pi:R_0 \to  \text{TL}^2(\delta) \quad \text{given by} \quad \pi(U^{\boxtimes k}) = k, \quad \pi(\Mor(U^{\boxtimes k}, U^{\boxtimes l})) = \TL_{2k,2l}(\delta) \qquad (k,l \in \N).\]
This is defined on the generating morphisms $\nu,m, \id_B$ of $R_0$ by \begin{align*}&\pi(\nu) = t(0,0)= \delta^{-1/2} \ \bigcup \in \text{TL}_{0,2}(\delta), \quad \pi(m) = \delta t(1,1)^* = \delta^{1/2} \ \big| \bigcap \big| \in \text{TL}_{4,2}(\delta), \\
&\pi(\id_B) = 1_2 = \big| \ \big| \in \TL_{2}(\delta). 
\end{align*}
From now on we will omit $\pi$ in our notation and simply view $\id_B$, $\nu$ and $m$ as generators of (a concrete faithful representation of) $\text{TL}^2(\delta)$.    

Using the above isomorphism, a description of the irreducible unitary representations of $\G$ and their fusion rules can be obtained.  See \cite{Ba02} and Theorem 4.1 of \cite{Ba99} for the tracial case.

\begin{thm} \label{thm:fusion_rules_QAG}
There exists a bijection $\Irr(\G) \cong \N$ and a maximal family $\{U^k\}_{k \in \N}$ of pairwise inequivalent irreducible finite dimensional unitary representations of $\G$ such that:
\begin{enumerate}
\item \label{eqn:selfconjugate} $\overline{U^k} \cong U^k$ for each $k \in \N$.
\item $U^0 = 1_{C_u(\G)}$ and the fundamental representation $U$ decomposes as $U \cong U^0 \oplus U^1$.
\item \label{eqn:fusionrules}$U^n \boxtimes U^k \cong \bigoplus_{r=0}^{2\min\{n,k\}}U^{k+n-r}$ for all $n,k \in \N$.  In particular, $\G$ has the same fusion rules as $SO(3)$.
\item \label{dims}The sequence of representation dimensions  $\{d_k = \dim(U^k)\}_{k \in \N}$ is given by the recursion \[d_0 = 1, \qquad d_1 = \dim B-1, \qquad d_1 d_k = d_{k+1} +d_{k} + d_{k-1} \qquad (k \ge 1).  \]
\end{enumerate}
\end{thm}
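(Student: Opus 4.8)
The plan is to run the entire computation inside the $2$-cabled category using the monoidal W$^\ast$-equivalence $\pi\colon R_0 \to \TL^2(\delta)$ recorded above, the point being that it reduces every question about $\Irr(\G)$ and about fusion to the well-understood combinatorics of Temperley--Lieb diagrams. The one structural input I would isolate is that, since $\delta \ge 2$, the Markov trace form on Temperley--Lieb diagrams is non-degenerate, so the diagrams $\{D_\pi : \pi \in NC_2(2n+2k)\}$ are linearly independent (see \cite{KaLi}) and
\[
\dim\Mor(U^{\boxtimes n}, U^{\boxtimes k}) = \dim\TL_{2n,2k}(\delta) = |NC_2(2n+2k)| = C_{n+k},
\]
the $(n+k)$-th Catalan number. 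In particular each $\Mor(U^{\boxtimes k},U^{\boxtimes k}) = \TL_{2k}(\delta)$ is a finite-dimensional $C^\ast$-algebra and hence semisimple, so every $U^{\boxtimes k}$ is completely reducible; and since $R$ is the completion of $R_0$, the irreducible constituents of the $U^{\boxtimes k}$ already exhaust $\Irr(\G)$.

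First I would identify the irreducibles through the Jones--Wenzl projections $\JW{2k} \in \TL_{2k}(\delta)$. For $k=1$ the algebra $\TL_2(\delta) = \mathrm{span}\{1_2,\ \nu\nu^*\}$ is two-dimensional and commutative with minimal projections $\nu\nu^*$ and $\JW{2} = 1_2 - \nu\nu^*$; since $\nu \in \Mor(1,U)$ is an isometry, $\nu\nu^*$ cuts out the trivial representation $U^0$, and I define $U^1$ to be the complementary summand, which yields (2). For general $k$ I would take $\JW{2k}$ to be the Jones--Wenzl projection (well-defined since $\delta \ge 2$ keeps the relevant quantum integers nonzero) and let $U^k$ be the subrepresentation of $U^{\boxtimes k}$ it cuts out. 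Irreducibility follows from the defining properties of $\JW{2k}$: it is orthogonal to every cap--cup diagram, so $\JW{2k}\,\TL_{2k}(\delta)\,\JW{2k} = \C\,\JW{2k}$, i.e. $\Mor(U^k,U^k) = \C$; distinct $\JW{2k}$ give pairwise inequivalent $U^k$, establishing the bijection $\Irr(\G)\cong\N$.

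The crux is the fusion rule (3), for which I would invoke the $SU(2)$-type recoupling of Jones--Wenzl projections in $\TL(\delta)$ (see \cite{KaLi}): for $\delta \ge 2$ there is no truncation and $\JW{a}\boxtimes\JW{b} \cong \bigoplus_{c}\JW{c}$ with $c$ running over $|a-b|, |a-b|+2, \dots, a+b$. Specializing to $a = 2n$, $b = 2k$, the admissible $c$ are precisely the even numbers $2|n-k|, 2|n-k|+2, \dots, 2(n+k)$, each of which indexes an object of the $2$-cabled subcategory; writing $c = 2m$ this gives $U^n \boxtimes U^k \cong \bigoplus_{m=|n-k|}^{n+k} U^m$, which is exactly the claimed sum $\bigoplus_{r=0}^{2\min\{n,k\}} U^{k+n-r}$ and shows $\G$ has the fusion rules of $SO(3)$. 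As a consistency check I would verify, via Frobenius reciprocity $\dim\Mor(U^{\boxtimes n}, U^{\boxtimes k}) = \sum_j \dim\Mor(U^j, U^{\boxtimes n})\,\dim\Mor(U^j, U^{\boxtimes k})$, that the resulting multiplicities sum to the Catalan number $C_{n+k}$ found above. Self-conjugacy (1) is then immediate, since every object of $\TL(\delta)$ is self-dual through the cup morphisms $t(k,l)$ and $\JW{2k}$ is fixed by the rotation implementing this duality, whence $\overline{U^k}\cong U^k$; and (4) follows by taking dimensions in $U = U^0 \oplus U^1$ (giving $d_1 = \dim B - 1$) and in $U^1 \boxtimes U^k \cong U^{k-1}\oplus U^k \oplus U^{k+1}$ for $k \ge 1$ (giving $d_1 d_k = d_{k+1}+d_k+d_{k-1}$).

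I expect the fusion step to be the main obstacle: rigorously establishing the Jones--Wenzl recoupling and the bookkeeping that the even-strand objects form a subcategory closed under fusion and realize exactly the integer-spin $SO(3)$ rules. The cleanest route is probably a single induction on $n+k$ that runs the recoupling and the Catalan count in tandem, using the Wenzl recursion for $\JW{2(k+1)}$ to split off the top summand of $U \boxtimes U^k$ and matching the resulting multiplicities against $\dim\Mor(U^{\boxtimes n}, U^{\boxtimes k}) = C_{n+k}$.
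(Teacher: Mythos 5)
Your proposal is correct and follows essentially the same route as the paper: the paper cites Banica \cite{Ba99,Ba02} for this theorem, and in Section \ref{expmodels} it realizes the $U^k$ exactly as you do, as the subrepresentations of $U^{\boxtimes k}$ cut out by the Jones--Wenzl projections $p_{2k}$ under the isomorphism $R_0 \cong \TL^2(\delta)$, with irreducibility, inequivalence, fusion and the dimension recursion obtained inductively from Temperley--Lieb recoupling as in \cite{KaLi}. Your observations that $p_{2k}\,\TL_{2k}(\delta)\,p_{2k} = \C\,p_{2k}$ and that the even-strand recoupling reproduces the $SO(3)$ rules are precisely the standard ingredients underlying the cited argument.
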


\subsection{Explicit Models} \label{expmodels}
To facilitate our investigation of certain structural properties $\G$, it will be useful to have explicit models for the irreducible representations $\{U^k\}_{k \in \N}$ of $\G$, as well as concrete expressions for certain morphisms between their tensor products.

Consider the $k$th tensor power $U^{\boxtimes k}$ of the fundamental representation of $\G$.  Using the isomorphism $R_0 = \TL^2(\delta)$, we can consider the Jones-Wenzl projection $p_{2k} \in \Mor(U^{\boxtimes k},U^{\boxtimes k}) = \TL_{2k}(\delta)$.  Put $H_k = p_{2k}B^{\otimes k}$ and define $U^k$ to be the subrepresentation of $U^{\boxtimes k}$ obtained by restricting to the invariant subspace $H_k \subset B^{\otimes k}$.  One can inductively show that $\{U^k\}_{k \in \N}$ forms a complete family of pairwise inequivalent irreducible unitary representations of $\G$ satisfying the hypotheses of Theorem \ref{thm:fusion_rules_QAG}.   For a good introduction to the basic properties of the Jones-Wenzl projections $\{p_y\}_{y \in \N}$, we refer the reader to the book \cite{KaLi}.  Using the notation from Section \ref{sect:reptheory}, note that $p_y \in \TL_y(\delta)$ is given explicitly by $p_0 = 1$, $p_1 = 1_1$, $p_2 = 1_2-\nu\nu^* = 1_2-t(0,0)t(0,0)^*$, and (more generally) \[p_y = 1_y - \bigvee_{0 \le r \le y-2} t(r,y-r-2)t(r,y-r-2)^* = \  \underbrace{\begin{tikzpicture}[baseline]\draw (.2,-.5)--(.2,.5);
    \node at (.6,0) {$\cdots$};
    \draw (1,-.5)--(1,.5);
\end{tikzpicture}}_{y} \
 - \bigvee_{0 \le r \le y-2} \Bigg(  \delta^{-1} \underbrace{\begin{tikzpicture}[baseline]
    \draw (.2,-.5)--(.2,.5);
    \node at (.6,0) {$\cdots$};
    \draw (1,-.5)--(1,.5);
\end{tikzpicture}}_{r} \
\begin{tikzpicture}[baseline]    \draw (1.2,-.5) arc (180:0:1mm);
    \draw (1.2,.5) arc (-180:0:1mm);
\end{tikzpicture} \
\underbrace{\begin{tikzpicture}[baseline]
    \draw (1.6,-.5) -- (1.6,.5);
    \node at (2,0) {$\cdots$};
    \draw (2.3,-.5) -- (2.3,.5);
\end{tikzpicture}}_{y-r-2} \ \Bigg)\qquad (y \ge 2).\]  Note that a  useful consequence of the above formulae for $\{p_y\}_{y \in \N}$ is the following \textit{absorption property}: $(p_{x} \otimes p_{y})p_{x+y} = p_{x+y}$  $\forall x,y \in \N$.   This property will be used throughout the  paper.

Given $n,k, l \in \N$ such that $U^l \subset U^n \boxtimes U^k$ (i.e., $U^l$ is equivalent to a subrepresentation of $ U^n \boxtimes U^k$), we now proceed to construct explicit morphisms $\rho_{l}^{n \boxtimes k} \in \Mor(U^l,U^n \boxtimes U^k) = (p_{2n} \otimes p_{2k}) \TL_{2l,2(n+k)}(\delta)p_{2l}$.  To do this we first require some notation.  

\begin{notat}\begin{enumerate}
\item Let $\delta \ge 2$ and let $0 < q \le 1$ be such that $\delta = q+q^{-1}$.  Recall that the \textit{$q$-numbers} and \textit{$q$-factorials} are defined by \[[a]_q = \frac{q^a-q^{-a}}{q-q^{-1}} = \frac{q^{-a+1}(1-q^{2a})}{1-q^2}, \qquad [a]_q! = [a]_q[a-1]_q \ldots [1]_q \qquad (a \in \N).\] 
Of course when $q =1$,  the above formulas reduce to $[a]_q = a$ and $[a]_q! = a!$.  Observe that $\delta = [2]_q$, $[3]_q = \delta^2-1$, and $[2k+1]_q = m_k$ - the quantum dimension of $U^k$.  
\item Given $a \in \N$, $1_{a}$ will denote the unit of the algebra $\text{TL}_{a}(\delta)$.  Thus $1_{2a} = \id_{B^{\otimes a}}$ via the isomorphism $R_0 \cong \TL^2(\delta)$.  More generally, if $x \le a$, $1_x$ will denote any collection of $x$ parallel through-strings in a Temperley-Lieb diagram belonging to $\TL_a(\delta)$.
\item For $r \ge 1$, let $t_r = [r+1]_q^{-1/2}(p_r \otimes p_r)\cup^{(r)}$, where $\cup^{(r)}$ denotes $r$ nested cups in $ \TL_{0,2r}(\delta)$.  In terms of planar diagrams,  \[t_r = [r+1]_q^{-1/2}\begin{tikzpicture}[baseline = -6]
	\node (first) at (-1,0) [rectangle,draw] {$\;\; \JW{r} \;\;$};
	\node (second) at (1,0) [rectangle,draw] {$\;\; \JW{r} \;\;$};
	\draw (first.-45) .. controls ++(-60:6mm) and ++(-120:6mm) .. (second.-135);
	\draw (first.-90) .. controls ++(-60:9mm) and ++(-120:9mm) .. (second.-90);
	\draw (first.-135) .. controls ++(-60:12mm) and ++(-120:12mm) .. (second.-45);

\end{tikzpicture} \in \TL_{0,2r}(\delta).\]
\end{enumerate} 
\end{notat}
Now if $U^l \subset U^n\boxtimes U^k$, then by Theorem \ref{thm:fusion_rules_QAG} there is a unique $0 \le r\le 2\min\{n,k\}$ such that $l = n+k - r$.  Define $\rho_{l}^{n \boxtimes k} \in \Mor(U^l,U^n \boxtimes U^k)$ by setting \begin{align} \label{eqn:rho}
\rho_{l}^{n \boxtimes k} = (p_{2n} \otimes p_{2k}) (1_{2n-r} \otimes t_{r} \otimes 1_{2k-r})p_{2l}. 
\end{align}
Using the results of \cite{KaLi}, Section 9.10, it follows that $\big(\rho_{l}^{n \boxtimes k}\big)^*\rho_{l}^{n \boxtimes k} = C_{(n,k,l)}p_{2l},$ where \begin{align} \label{eqn:norm_intertwiner}C_{(n,k,l)} = \frac{[2n+2k-r +1]_q![2n-r]_q![r]_q![2k-r]_q!}{[r+1]_q[2l+1]_q[2n]_q![2k]_q![2l]_q!} > 0.
\end{align} 
Therefore $C_{(n,k,l)}^{-1/2}\rho_{l}^{n \boxtimes k}$ is an \textit{isometric morphism} from $H_l$ onto the subspace of $H_n \otimes H_k$ equivalent to $H_l$.  Moreover, this isometry is unique  (up to scalar multiplication by $\T$) since the inclusion $U^l \subset U^n \boxtimes U^k$ is multiplicity-free by Theorem \ref{thm:fusion_rules_QAG}.  

When we study the property of rapid decay and factoriality, it will be useful for us to have a description of the quantity $1_{2n-r} \otimes t_{r} \otimes 1_{2k-r}$ in formula \eqref{eqn:rho} in terms of the basic morphisms $m, \nu, 1_2$ and the Jones-Wenzl projections $\{p_y\}_{y \in \N}$.  This can be done recursively as follows: if $r=2s$ is even, then $t_{2s}$ is given by the initial condition and recursion
\begin{align}
\label{eqn:t2o}t_0 = 1 \in \C, \qquad t_{2} &= [3]_q^{-1/2} (p_2 \otimes p_2) (m^*\nu),  \qquad p_2 = 1_2 - \nu\nu^*, \\
\label{eqn:t2k}t_{2(k+l)} &=\Big(\frac{[2k+1]_q[2l+1]_q}{[2k+2l+1]_q}\Big)^{1/2} (p_{2k+2l} \otimes p_{2l+2k})(1_{2k} \otimes t_{2l} \otimes 1_{2k})t_{2k} \qquad (k,l \in \N).
\end{align}   If $r=2s+1$ is odd, then  \begin{align} \label{oddt}1_{2n-r} \otimes t_{r} \otimes 1_{2k-r} &= \Big( \frac{[2s+1]_q}{[2s+2]_q[2]_q}\Big)^{1/2}(1_{2n-2s-1} \otimes p_{2s+1} \otimes p_{2s+1} \otimes 1_{2k-2s-1})  \\
& \qquad \times (1_{2n-2s-2} \otimes (1_2 \otimes t_{2s} \otimes 1_2)m^* \otimes 1_{2k-2s-2}),  \notag
\end{align} 
which can be readily verified using the fact that $m^* = [2]_q(1_1 \otimes t_1 \otimes 1_1)$.
Note furthermore that $C_{(k,k,0)} = 1$ for each $k \in \N$ by equation \eqref{eqn:norm_intertwiner}.  Thus each $t_{2k} \in \Mor(1, U^k \boxtimes U^k)$ is an isometric morphism, and each $ \rho_{l}^{n \boxtimes k} = (p_{2n} \otimes p_{2k}) (1_{2n-r} \otimes t_{r} \otimes 1_{2k-r})p_{2l}$ is a contraction.  

To conclude this section let us also fix once and for all an orthonormal basis  $\{e_i\}_{i=1}^{d_1}$ for $H_1 = B \ominus \C1$ and write $U^1 = [u_{ij}^1]$ relative to this basis. The equivalence $U^1 \cong \overline{U^1}$ given by Theorem \ref{thm:fusion_rules_QAG} ensures that there is an invertible matrix $F_1 \in \text{GL}(d_1,\C)$ such that $U^1 = (F_1 \otimes 1) \overline{U^1}(F_1^{-1} \otimes 1)$.  Moreover, since $U^1$ is irreducible, we can assume that $\overline{F_1}F_1 = c1$ where $c = \pm 1$ (see \cite{Ba0}, Page 242).  By possibly replacing  $F_1$ by $zF_1$ for some $z \in \T$, we may also assume that \begin{align} \label{eqn:t2} t_2 =\text{Tr}(F_1^*F_1)^{-1/2} \sum_{i=1}^{d_1} e_i \otimes F_1e_i \in \Mor(1,U^1\boxtimes U^1). \end{align}
This follows because both sides of equation \eqref{eqn:t2} are evidently isometries in the one dimensional space $\Mor(1,U^1\boxtimes U^1)$.
Finally, note that when $\psi$ is the $\delta$-trace on $B$, then $\G_{\text{aut}}(B,\psi)$ is of Kac-type.  We therefore can and will assume that $F_1 \in \mc U(d_1)$ is unitary.  Also note that when $\psi$ is the $\delta$-trace, formula \eqref{eqn:t2o} yields $\sigma t_2 = t_2$, where $\sigma$ is the tensor flip-map.  On the other hand, formula \eqref{eqn:t2} yields $\sigma t_2 = ct_2$.  Consequently  $\overline{F_1}F_1 = 1$.  We leave the easy details to the reader.  

\section{Approximation Properties and The Property of Rapid Decay} \label{section:AP}

This section is devoted to proving some approximation properties for the reduced operator algebras and convolution algebras associated to the quantum groups $\G_{\text{aut}}(B, \psi)$, where $\psi$ is the $\delta$-trace on $B$.  

\subsection{The Haagerup Property} \label{section:HAP}

Our first goal is to prove that $L^\infty(\G)$ has the Haagerup property.  We begin by recalling this approximation property.

\begin{defn} \label{def:HAP}
A finite von Neumann algebra  $(M,\tau)$ has the \textit{Haagerup  property} if there exists a net $\{\Phi_t\}_{t \in \Lambda}$ of normal unital completely positive $\tau$-preserving maps on $M$ such that
\begin{enumerate}
\item For each $t \in \Lambda$, the $L^2$-extension $\hat{\Phi}_t: L^2(M) \to L^2(M)$ is a compact operator.  
\item For each $\xi \in L^2(M)$, $\lim_{t \in \Lambda}\|\hat{\Phi}_t\xi -\xi\|_{L^2(M)} = 0.$ 
\end{enumerate}  
\end{defn}

\begin{thm} \label{thm:HAP_Kac}
Let $B$ be a finite dimensional C$^\ast$-algebra with $\delta$-trace $\psi$ and let $\G = \G_{\text{aut}}(B, \psi)$.  Then $L^\infty(\G)$ has the Haagerup property.
\end{thm}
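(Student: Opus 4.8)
The plan is to establish the Haagerup property by constructing a net of central completely positive multipliers, exploiting the commutative $SO(3)$-type fusion rules from Theorem \ref{thm:fusion_rules_QAG}. Since $\psi$ is a $\delta$-trace, $\G$ is of Kac type, so $L^\infty(\G)$ is a finite von Neumann algebra with faithful Haar trace $h$, and by \eqref{eqn:L2decomp} we have the orthogonal decomposition $L^2(\G) = \bigoplus_{k \in \N} L^2_k(\G)$ indexed by $\Irr(\G) \cong \N$. Given a bounded function $\phi \from \N \to \C$ with $\phi(0) = 1$, I consider the associated central multiplier $m_\phi$ determined by $m_\phi(u^k_{ij}) = \phi(k) u^k_{ij}$; its $L^2$-extension is the diagonal operator $\hat m_\phi = \bigoplus_k \phi(k)\, P_k$ acting as the scalar $\phi(k)$ on each finite-dimensional block $L^2_k(\G)$. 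For such a map the three requirements in Definition \ref{def:HAP} become purely combinatorial conditions on $\phi$: the map is automatically $h$-preserving and normal; its $L^2$-extension is compact precisely when $\phi(k) \to 0$ as $k \to \infty$ (each $L^2_k(\G)$ being finite-dimensional); and a net $\hat m_{\phi_t}$ converges to $\id$ strongly on $L^2(\G)$ precisely when $\sup_k|\phi_t(k)| \le 1$ and $\phi_t(k) \to 1$ for each fixed $k$.

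Thus everything reduces to producing a net of symbols $\{\phi_t\}$ for which each $m_{\phi_t}$ is \emph{unital completely positive} and which satisfies $\phi_t(k) \to 0$ as $k \to \infty$ (for each $t$) together with $\phi_t(k) \to 1$ (for each $k$) along the net. The key structural point is that $m_\phi$ is unital completely positive whenever $\phi$ arises from a \emph{central state} $\omega$ on $C_u(\G)$ --- a state with $\omega(u^k_{ij}) = \phi(k)\delta_{ij}$ --- via $m_\phi = (\id \otimes \omega)\Delta$, which is a slice of the $\ast$-homomorphism $\Delta$ and hence completely positive. Writing $\chi_k = \sum_i u^k_{ii}$ for the character of $U^k$, the fusion rules of Theorem \ref{thm:fusion_rules_QAG}\eqref{eqn:fusionrules} force $\chi_1\chi_k = \chi_{k+1} + \chi_k + \chi_{k-1}$, so that $\chi_k = f_k(\chi_1)$ for an explicit sequence of polynomials $\{f_k\}$ (the same recursion that governs $\{d_k\}$ in part \eqref{dims}). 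Since $U^1 \cong \overline{U^1}$ in the Kac case, $\chi_1 = \chi_1^*$ is self-adjoint and generates a commutative C$^\ast$-subalgebra, so central states restrict to states on $C^\ast(\chi_1) \cong C(\operatorname{sp}(\chi_1))$, i.e.\ to probability measures $\mu$ on $\operatorname{sp}(\chi_1)$ with $d_k\,\phi(k) = \omega(\chi_k) = \int f_k\,d\mu$; conversely each such measure extends to a central state via the canonical conditional expectation onto the characters available in the Kac setting. The counit $\epsilon$ is the distinguished central state with $\epsilon(\chi_1) = d_1$, giving $\phi \equiv 1$ and $m_\phi = \id$; note that $d_1 = \max \operatorname{sp}(\chi_1)$ and $f_k(d_1) = d_k$.

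Given this dictionary, the concrete task is to choose probability measures $\{\mu_t\}$ on $\operatorname{sp}(\chi_1)$ converging weak$^\ast$ to the point mass at the top of the spectrum, so that $\phi_t(k) = d_k^{-1}\int f_k\,d\mu_t \to d_k^{-1} f_k(d_1) = 1$ for each $k$, while for each fixed $t$ the measure is spread sufficiently far below $d_1$ that $\phi_t(k) \to 0$ as $k \to \infty$. The bound $|\phi_t(k)| \le 1$ is automatic since $\|\chi_k\| = d_k$, with the maximum attained at $d_1$. The genuine obstacle --- and the technical heart of the argument --- is the complete positivity: one must exhibit \emph{bona fide} probability measures (equivalently, honest central states), not merely functions with the right asymptotics, and simultaneously establish the decay estimate $\max_{x \le d_1 - \eta}|f_k(x)|/d_k \to 0$ as $k \to \infty$ for each $\eta > 0$. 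This decay, which says that the renormalized characters concentrate at the spectral edge, is where the explicit Temperley--Lieb and $q$-number description of the $f_k$ (via $\delta = q + q^{-1}$ and $m_k = [2k+1]_q$) together with an analysis of the orthogonalizing measure for $\{f_k\}$ enter; with these estimates in hand, the family $\{m_{\phi_t}\}$ furnishes the completely positive maps required by Definition \ref{def:HAP}.
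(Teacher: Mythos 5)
Your overall strategy is the same as the paper's: reduce to central multipliers whose symbols come from states on the commutative character algebra $C^*\big(\chi_k : k \in \N\big) = C^*(1,\chi_1)$, identify these with probability measures on the spectrum via the fusion-rule polynomials, and let the measures concentrate at the top of the spectrum. You have also correctly located the difficulty, but you have not resolved it, and it is not a deferrable technicality --- it is the main content of the paper's proof. Concretely: everything in your last paragraph presupposes that $\sigma(\chi_1)$ contains points strictly below its maximum $d_1$ but arbitrarily close to it. Nothing you have established rules out that the top of the spectrum is isolated, e.g.\ $\sigma(\chi) = [0,4] \cup \{\dim B\}$ for $\chi = 1+\chi_1$: the spectral measure of $\chi$ relative to the Haar trace is the free Poisson law, whose support is exactly $[0,4]$, and the counit contributes the single point $\dim B$; no general principle forces anything in between. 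If the top were isolated, your scheme fails outright, not just technically. Indeed any probability measure on $[0,4]\cup\{\dim B\}$ has the form $(1-\epsilon)\delta_{\dim B} + \epsilon\nu$ with $\nu$ supported in $[0,4]$; since $\Pi_k/\Pi_k(\dim B) \to 0$ uniformly on $[0,4]$, the resulting symbol satisfies $\phi(k) \to 1-\epsilon$ as $k \to \infty$, so $L^2$-compactness forces $\epsilon = 1$; but then $\phi(1) = \int (x-1)\,d\nu(x)/(\dim B -1) \le 3/(\dim B -1) < 1$ is bounded away from $1$ for $\dim B \ge 5$, so no such net can converge to the identity. Thus the existence of ``bona fide'' states near the spectral edge is exactly the point at issue, and your proposal stops where the real proof begins.

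The paper closes this gap with Proposition \ref{prop:spectrum}, which proves $[0,\dim B] \subseteq \sigma(\chi)$: one decomposes $B$ into matrix blocks, maps $C_u(\G)$ onto the tensor product of $C_u(S_{n_0}^+)$ with the algebras $C(SO(n_i))$ acting diagonally, and then pushes $\chi$ further onto a free product $\ast_{l=1}^s C^*(\Z_2)$, where the spectrum of a sum of $s$ free projections is known to be $[0,s]$. Once $[t_0,\dim B) \subset \sigma(\chi)$ is available, the states you need are simply the evaluations $\varphi_t(\chi_k) = \Pi_k(t)$ with $t \uparrow \dim B$ (point masses $\delta_t$, which trivially exist as states on $C(\sigma)$), and the decay estimate you flag is the easy part: $\Pi_k(t)/\Pi_k(\dim B) = S_{2k}(\sqrt{t})/S_{2k}(\sqrt{\dim B}) \le A(t_0)\,(t/\dim B)^k$. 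One further loose end: your assertion that every state on the character algebra extends to a central state of $C_u(\G)$ (``the canonical conditional expectation in the Kac setting'') is itself a nontrivial theorem --- in the paper it is quoted as Theorem \ref{thm:average_Kac} from \cite{Br}, whose proof uses traciality of the Haar state essentially --- so it must be cited or proved, not asserted.
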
  

To prove Theorem \ref{thm:HAP_Kac}, we begin by recalling a standard method for constructing normal completely positive maps on $L^\infty(\G)$ from states on $C_u(\G)$.  See for example \cite{Br}, Lemma 3.4.

\begin{prop}\label{prop:states_NUCP}
Let $\G$ be a compact quantum group and $\varphi \in C_u(\G)^*$ be a state.  Then there exists a unique normal unital completely positive $h$-preserving map $M_\varphi \in \mc{CB}(L^\infty(\G))$ defined by \[M_\varphi \lambda(a) = \lambda \big(( \varphi \otimes \id_{C_u(\G)} )\Delta a\big) \qquad (a \in C_u(\G)).\]  Moreover, $M_\varphi C_r(\G) \subseteq C_r(\G)$. 
\end{prop}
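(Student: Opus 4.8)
First I would build the map at the level of the universal C$^\ast$-algebra, then descend to $C_r(\G)$, and finally extend normally to $L^\infty(\G)$. Set $T_\varphi := (\varphi \otimes \id_{C_u(\G)})\Delta : C_u(\G) \to C_u(\G)$. Since $\Delta$ is a unital $\ast$-homomorphism and $\varphi \otimes \id$ is a slice by a state, $T_\varphi$ is a composition of completely positive maps, hence unital completely positive; and the right-invariance of the Haar state gives $h(T_\varphi a) = (\varphi \otimes h)\Delta a = \varphi\big((\id \otimes h)\Delta a\big) = \varphi(h(a)1) = h(a)$, so $T_\varphi$ is $h$-preserving. I would also note $T_\varphi(\mc A) \subseteq \mc A$, since on a matrix coefficient $T_\varphi(u^\alpha_{ij}) = \sum_k \varphi(u^\alpha_{ik})u^\alpha_{kj}$ stays in the $\alpha$-block $\text{span}\{u^\alpha_{kj}\}_k$.

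The next step is to push $T_\varphi$ down to $C_r(\G)$. Writing $\Omega = \Lambda_h(1)$ for the cyclic GNS vector, I would first record the identification $\ker \lambda = \ker \Lambda_h$: indeed $\lambda(a)\Omega = \Lambda_h(a)$, and since the reduced Haar state is faithful on $L^\infty(\G)$ the vector $\Omega$ is separating for $L^\infty(\G) \supseteq C_r(\G)$, so $\lambda(a)\Omega = 0$ forces $\lambda(a) = 0$. By the Kadison--Schwarz inequality for the ucp map $T_\varphi$ together with $h$-invariance, $h\big(T_\varphi(a)^* T_\varphi(a)\big) \le h\big(T_\varphi(a^* a)\big) = h(a^* a)$, so $T_\varphi$ preserves $\ker \Lambda_h = \ker \lambda$ and the assignment $M_\varphi(\lambda(a)) := \lambda(T_\varphi a)$ is well defined on $C_r(\G)$. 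It is unital and maps $C_r(\G)$ into $C_r(\G)$ by construction (this already yields the final assertion), it is $h$-preserving because $T_\varphi$ is, and it is completely positive because it is induced by the completely positive $T_\varphi$ through the surjective $\ast$-homomorphism $\lambda$ (lift a positive element of $M_n(C_r(\G))$ to a positive element of $M_n(C_u(\G))$, apply $T_\varphi \otimes \id_{M_n}$, and push back down). The same estimate shows $M_\varphi$ induces an $L^2$-contraction $\hat M_\varphi$ on $L^2(\G)$ with $\hat M_\varphi(x\Omega) = M_\varphi(x)\Omega$.

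It then remains to extend $M_\varphi$ from the $\sigma$-weakly dense C$^\ast$-algebra $C_r(\G)$ to a normal unital completely positive map on $L^\infty(\G) = C_r(\G)''$. My plan here is to exploit the spectral grading: $T_\varphi$ preserves each finite-dimensional block $\text{span}\{u^\alpha_{ij}\}$ and acts there by left multiplication by the fixed contractive matrix $(\id \otimes \varphi)(U^\alpha) \in \mc B(H_\alpha)$, so that $\hat M_\varphi = \bigoplus_{\alpha \in \Irr(\G)} \hat M_\varphi P_\alpha$ is block diagonal for $L^2(\G) = \bigoplus_\alpha L^2_\alpha(\G)$, and $M_\varphi$ is exactly the multiplier on $L^\infty(\G)$ associated with the bounded element $\bigoplus_\alpha (\id \otimes \varphi)(U^\alpha) \in \ell^\infty(\hG)$. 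One checks that multipliers of this form are $\sigma$-weakly continuous, so $M_\varphi$ extends to a normal map; in the Kac case needed later this is especially transparent, since $L^\infty(\G)$ is a finite von Neumann algebra with faithful trace $h$ and the trace-preserving $L^2$-contraction $\hat M_\varphi$ directly produces the normal extension. Complete positivity of the extension is inherited from that of $M_\varphi$ on the dense subalgebra, and uniqueness is immediate because two normal maps agreeing on the $\sigma$-weakly dense $C_r(\G)$ coincide.

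I expect the main obstacle to be precisely this last passage from $C_r(\G)$ to $L^\infty(\G)$: complete positivity and the $L^2$-bound are formal consequences of Kadison--Schwarz, but verifying $\sigma$-weak continuity (normality) genuinely requires either the coalgebra-grading argument above or an appeal to the standard-form and modular structure of $(L^\infty(\G), h)$.
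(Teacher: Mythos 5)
The paper itself does not prove this proposition; it quotes it from \cite{Br}, Lemma 3.4. Your route is the same standard one as that reference: build $T_\varphi=(\varphi\otimes\id)\Delta$ at the universal level, descend to $C_r(\G)$ through $\ker\lambda$, and implement on $L^2(\G)$. Up to the von Neumann extension your argument is correct: the Kadison--Schwarz estimate $h(T_\varphi(a)^*T_\varphi(a))\le h(a^*a)$, together with the identification $\ker\lambda=\{a:h(a^*a)=0\}$ (valid because $\Omega=\Lambda_h(1)$ is separating for $L^\infty(\G)$, i.e.\ the Haar state is faithful on $L^\infty(\G)$), shows that $M_\varphi\lambda(a):=\lambda(T_\varphi a)$ is well defined on $C_r(\G)=\lambda(C_u(\G))$, and complete positivity does pass through the quotient by lifting positive matrices over $C_r(\G)$ to positive matrices over $C_u(\G)$.

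The genuine gap is the final step, exactly where you predicted it: normality of the extension for a \emph{general} compact quantum group. The sentence ``one checks that multipliers of this form are $\sigma$-weakly continuous'' is not an argument --- that continuity \emph{is} the content of the proposition --- and your fallback via the finite trace covers only the Kac case (sufficient for every application in this paper, but not for the statement as written, where $h$ need not be tracial). The ingredients you already have do close the gap in general, as follows. For $x\in C_r(\G)$ one has $M_\varphi(x)\Omega=\hat M_\varphi(x\Omega)$, and every $z'\in L^\infty(\G)'$ commutes with $M_\varphi(x)$, so for $y',z'\in L^\infty(\G)'$,
\[
\omega_{y'\Omega,\,z'\Omega}\big(M_\varphi(x)\big)=\big\langle z'^{*}y'\Omega\,\big|\,\hat M_\varphi(x\Omega)\big\rangle=\omega_{\hat M_\varphi^{*}(z'^{*}y'\Omega),\,\Omega}(x).
\]
Thus the Banach adjoint of $M_\varphi\colon C_r(\G)\to C_r(\G)$ maps each functional $\omega_{y'\Omega,z'\Omega}$ to a vector functional, hence into $L^1(\G)$; since $\Omega$ is cyclic for $L^\infty(\G)'$, the span of such functionals is norm dense in $L^1(\G)$, and boundedness then gives a map $m_\varphi\colon L^1(\G)\to L^1(\G)$. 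The adjoint $(m_\varphi)^{*}\colon L^\infty(\G)\to L^\infty(\G)$ is the desired normal extension: it agrees with $M_\varphi$ on $C_r(\G)$ by construction, it is unital, it is completely positive by Kaplansky density (bounded nets of positives in $M_n(C_r(\G))$ are $\sigma$-weakly dense in the positives of $M_n(L^\infty(\G))$), and it is $h$-preserving because $h\circ(m_\varphi)^{*}$ and $h$ are normal and agree on $C_r(\G)$. Uniqueness is, as you say, immediate from $\sigma$-weak density.
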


\begin{rem} \label{rem:multipliers}
It is easily verified that the map $M_\varphi$ defined in Proposition \ref{prop:states_NUCP} satisfies the relation $\Delta_r M_\varphi = (M_\varphi \otimes \id_{L^\infty(\G)})\Delta_r$.  A map $T \in \mc{CB}(L^\infty(\G))$ such that $\Delta_r T = (T \otimes \id_{L^\infty(\G)})\Delta_r$ (respectively $\Delta_r T = (\id_{L^\infty(\G)} \otimes T)\Delta_r$) is called a \textit{completely bounded left (respectively right) multiplier} of $L^\infty(\G)$.  This terminology originates from the classical situation where $L^\infty(\G) = VN(\Gamma)$ is the reduced von Neumann algebra of a discrete group $\Gamma$.  In this case, the above definition reduces to the one for completely bounded Fourier multipliers on $\Gamma$.  See \cite{CoHa}.  Note that for (unital) \textit{completely positive} left multipliers, a recent result of Daws \cite{Da11} shows that Proposition \ref{prop:states_NUCP} is actually a characterization of such multipliers. 
\end{rem} 

Now let $\{U^\alpha = [u_{ij}^\alpha] : \alpha \in \Irr(\G)\}$ be a maximal family of irreducible unitary representations of a compact quantum group $\G$ and let $\chi_\alpha = (\text{Tr} \otimes \id_{C_u(\G)})U^\alpha \in C_u(\G)$ be the irreducible character of the representation $U^\alpha$.  We say that $T \in \mc {CB}(L^\infty(\G))$ is a \textit{central} completely bounded multiplier if $T$ is both a left and right completely bounded multiplier of $L^\infty(\G)$.  This is equivalent to saying that for each $\alpha \in \Irr(\G)$, there is a constant $c_\alpha\in \C$ such that $T\lambda(u^\alpha_{ij}) = c_\alpha\lambda(u^\alpha_{ij})$ for all $1\le i,j \le d_\alpha$.  The following result (proved in \cite{Br}) will be our main tool for proving Theorem \ref{thm:HAP_Kac}.  It shows that when $\G$ is of Kac type, central completely positive multipliers can be obtained from states on the (generally much smaller and more tractable) subalgebra $C^*\big(\chi_\alpha: \alpha \in \Irr(\G)\big) \subset C_u(\G)$ generated by the irreducible characters of $\G$. 

\begin{thm}[\cite{Br}, Theorem 3.7] \label{thm:average_Kac}
Let $\G$ be a compact quantum group of Kac type and let $\varphi \in C^*\big(\chi_\alpha: \alpha \in \Irr(\G)\big)^*$ be a state.  Then there exists a unique normal unital completely positive $h$-preserving map $T_\varphi \in \mc{CB}(L^\infty(\G))$ defined by \[T_\varphi \lambda(u_{ij}^\alpha) = \frac{\varphi(\chi_{\alpha}^*)}{d_\alpha}\lambda(u_{ij}^\alpha) \qquad (\alpha \in \Irr(\G), 1 \le i,j \le d_\alpha).\]   Moreover, $T_\varphi  C_r(\G) \subseteq C_r(\G)$. 
\end{thm}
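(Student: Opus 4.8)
The plan is to route everything through Proposition \ref{prop:states_NUCP}: normality, unitality, complete positivity, $h$-preservation and the inclusion $T_\varphi C_r(\G) \subseteq C_r(\G)$ will all be handed to us for free, provided we can exhibit a single state $\omega$ on $C_u(\G)$ whose induced multiplier $M_\omega$ is \emph{central} with the correct eigenvalues. As noted just before the statement, $M_\omega$ is central precisely when $\omega(u_{ij}^\alpha) = \frac{\delta_{ij}}{d_\alpha}\omega(\chi_\alpha)$ for all $\alpha$, in which case $M_\omega$ acts on each block $L^2_\alpha(\G)$ of the decomposition \eqref{eqn:L2decomp} as multiplication by $\omega(\chi_\alpha)/d_\alpha$. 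Thus the whole problem reduces to manufacturing, from the given state $\varphi$ on the character algebra $C^*(\chi_\alpha : \alpha \in \Irr(\G))$, a central state $\omega$ on $C_u(\G)$ with $\omega(\chi_\alpha) = \varphi(\chi_\alpha^*)$.

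To produce $\omega$ I would average $\varphi$ over the adjoint action. Writing $\mathrm{ad} \colon C_u(\G) \to C_u(\G) \otimes C_u(\G)$ for the adjoint coaction, which on matrix elements reads $\mathrm{ad}(u_{ij}^\alpha) = \sum_{k,l} u_{kl}^\alpha \otimes (u_{ki}^\alpha)^* u_{lj}^\alpha$, I set $P = (\id \otimes h) \circ \mathrm{ad}$. As a slice of the $\ast$-homomorphism $\mathrm{ad}$ by the Haar state, $P$ is unital and completely positive, and a direct application of the Schur orthogonality relations \eqref{eqn:Haar} in the Kac case (where $Q_\alpha = \id$) gives $P(u_{ij}^\alpha) = \frac{\delta_{ij}}{d_\alpha}\chi_\alpha$. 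Since products of characters decompose into sums of characters by the fusion rules, $\mathrm{span}\{\chi_\alpha\}$ is already a $\ast$-subalgebra, so its closure is $C^*(\chi_\alpha : \alpha)$ and $P$ is a completely positive conditional expectation of $C_u(\G)$ onto this algebra. The characters are fixed by $P$, so $\varphi \circ P$ is already a central state restricting to $\varphi$; to land on $\chi_\alpha^*$ instead of $\chi_\alpha$ I post-compose with the coinverse $\kappa$, which in the Kac case is a bounded $\ast$-anti-automorphism of $C_u(\G)$ sending $\chi_\alpha = \sum_i u_{ii}^\alpha$ to $\sum_i (u_{ii}^\alpha)^* = \chi_\alpha^*$. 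Setting $\omega = \varphi \circ \kappa \circ P$ then yields a positive unital functional — hence a state — that is central with $\omega(\chi_\alpha) = \varphi(\chi_\alpha^*)$. Feeding $\omega$ into Proposition \ref{prop:states_NUCP} and putting $T_\varphi := M_\omega$ produces exactly the asserted formula $T_\varphi \lambda(u_{ij}^\alpha) = \frac{\varphi(\chi_\alpha^*)}{d_\alpha}\lambda(u_{ij}^\alpha)$.

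Uniqueness is then immediate: a normal map on $L^\infty(\G)$ is determined by its values on the weak$^\ast$-dense subalgebra $\lambda(\mc A) = \mathrm{span}\{\lambda(u_{ij}^\alpha)\}$, on which the displayed formula prescribes $T_\varphi$ outright; equivalently, the scalars $\varphi(\chi_\alpha^*)/d_\alpha$ pin down $T_\varphi$ on each summand $L^2_\alpha(\G)$. The remaining assertions — normality, unitality, complete positivity, $h$-preservation, and $T_\varphi C_r(\G) \subseteq C_r(\G)$ — are inherited verbatim from Proposition \ref{prop:states_NUCP} applied to $\omega$, which is the entire point of routing the construction through an honest state rather than checking positivity by hand. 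I expect the only genuine obstacle to be the averaging step: verifying that $P$ is a completely positive conditional expectation onto the character algebra (which is exactly where the Kac hypothesis and the orthogonality relations \eqref{eqn:Haar} are used) and correctly tracking the conjugate, via $\kappa$, so that $\omega(\chi_\alpha)$ comes out as $\varphi(\chi_\alpha^*)$ rather than $\varphi(\chi_\alpha)$. Everything downstream is a formal consequence of Proposition \ref{prop:states_NUCP}.
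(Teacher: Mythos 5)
Your opening reduction is sound: if you can produce a genuine state $\omega$ on $C_u(\G)$ with $\omega(u^\alpha_{ij})=\delta_{ij}\varphi(\chi_\alpha^*)/d_\alpha$, then Proposition \ref{prop:states_NUCP} delivers normality, unitality, complete positivity, $h$-preservation and $T_\varphi C_r(\G)\subseteq C_r(\G)$, and uniqueness follows from density exactly as you say. The genuine gap is at the step you yourself flag as the crux, and your resolution of it is wrong: the adjoint coaction $\mathrm{ad}(u^\alpha_{ij})=\sum_{k,l}u^\alpha_{kl}\otimes(u^\alpha_{ki})^*u^\alpha_{lj}$ is \emph{not} a $\ast$-homomorphism on a noncommutative $C_u(\G)$. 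Computing both sides of $\mathrm{ad}(u^\alpha_{ij}u^\beta_{st})=\mathrm{ad}(u^\alpha_{ij})\,\mathrm{ad}(u^\beta_{st})$ from the formula, multiplicativity would force
\[
\sum_{k,l,p,q}u^\alpha_{kl}u^\beta_{pq}\otimes(u^\beta_{ps})^*(u^\alpha_{ki})^*u^\alpha_{lj}u^\beta_{qt}
\;=\;
\sum_{k,l,p,q}u^\alpha_{kl}u^\beta_{pq}\otimes(u^\alpha_{ki})^*u^\alpha_{lj}(u^\beta_{ps})^*u^\beta_{qt},
\]
i.e.\ the factors $(u^\beta_{ps})^*$ would have to slide past $(u^\alpha_{ki})^*u^\alpha_{lj}$. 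This holds in the commutative case (where $\mathrm{ad}$ is the familiar $f\mapsto f(g^{-1}xg)$) and trivially in the cocommutative case, but not in general: $\mathrm{ad}$ makes $C_u(\G)$ into a comodule, not a comodule algebra. So the complete positivity of $P=(\id\otimes h)\circ\mathrm{ad}$, and with it the positivity of your $\omega=\varphi\circ\kappa\circ P$, is unestablished. Note that you cannot sidestep this by retreating to the reduced or von Neumann level, where the $h$-preserving conditional expectation onto $W^*(\lambda(\chi_\alpha))$ exists for free by traciality: $\varphi$ is a state on the \emph{universal} character algebra $C^*(\chi_\alpha)\subset C_u(\G)$, and in the application (Theorem \ref{thm:HAP_Kac}) the states $\varphi_t$ are evaluations at points $t$ that need not lie in the spectrum of $\lambda(\chi)$ in $C_r(\G)$, so they do not factor through the reduced character algebra. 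Positivity at the universal level is the whole difficulty.

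For comparison, the proof this paper invokes (\cite{Br}, Theorem 3.7; see the remark immediately following the statement) avoids averaging at the universal level altogether, using exactly the two ingredients named there. Since $\kappa$ is a bounded $\ast$-anti-automorphism in the Kac case, $\varphi\circ\kappa$ is again a state on $C^*(\chi_\alpha)$; extend it by Hahn--Banach to a state $\tilde\varphi$ on all of $C_u(\G)$ and form the multiplier $M_{\tilde\varphi}$ of Proposition \ref{prop:states_NUCP}. The centralization is then performed at the von Neumann level with the $h\otimes h$-preserving conditional expectation $E:L^\infty(\G)\overline{\otimes}L^\infty(\G)\to\Delta_r(L^\infty(\G))$ (this is where traciality enters): the orthogonality relations \eqref{eqn:Haar} give $E\big(\lambda(u^\alpha_{ik})\otimes\lambda(u^\alpha_{lj})\big)=(\delta_{kl}/d_\alpha)\,\Delta_r\lambda(u^\alpha_{ij})$, so that $T_\varphi:=\Delta_r^{-1}\circ E\circ(\id\otimes M_{\tilde\varphi})\circ\Delta_r$ is a composition of normal ucp $h$-preserving maps acting on each $\lambda(u^\alpha_{ij})$ by the scalar $\tilde\varphi(\chi_\alpha)/d_\alpha=\varphi(\chi_\alpha^*)/d_\alpha$. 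Your route can in fact be repaired, but only by doing real work precisely where your argument asserts a falsehood: one can check by hand, using traciality of $h$ and $\kappa(x^*)=\kappa(x)^*$, that $P(a^*a)=(\id\otimes h)(X^*X)\ge 0$ for $a$ in the Hopf $\ast$-algebra, where $X=\sum a_{(2)}\otimes a_{(3)}\kappa(a_{(1)})$ in Sweedler notation, and then invoke the (standard but nontrivial) fact that a positive unital functional on $C_{\text{alg}}(\G)$ extends automatically to a state on $C_u(\G)$ because the generators $u^\alpha_{ij}$ act as contractions in any GNS representation. Without these two steps, the proposal does not go through as written.
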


\begin{rem}
Our proof in \cite{Br} of Theorem \ref{thm:average_Kac} uses the traciality of the Haar state in an essential way.  Indeed, the proof relies on both the boundedness of the coinverse $\kappa:\mc A \to \mc A$ and the existence of an $h$-preserving conditional expectation $E:L^\infty(\G)\overline{\otimes}L^\infty(\G) \to \Delta_r(L^\infty(\G))$. Both of these conditions imply that $h$ is tracial.  
\end{rem}

Let us now return to the quantum automorphism groups $\G = \G_{\text{aut}}(B, \psi)$.  Theorem \ref{thm:average_Kac} suggests that we study the (state space of the) C$^\ast$-algebra $C^*\big(\chi_k: k \in \N\big) \subset C_u(\G)$, where $\chi_k$ is the character of the irreducible representation $U^k$ given by Theorem \ref{thm:fusion_rules_QAG}.  Since $\G$ has commutative fusion rules and each irreducible representation $U^k$ is equivalent to $\overline{U^k}$ by Theorem \ref{thm:fusion_rules_QAG}, it follows from general character theory that $C^*\big(\chi_k: k \in \N\big)$ is commutative and that each character $\chi_k$ is self-adjoint.  In the following paragraphs we will obtain a more precise description of this C$^\ast$-algebra.

Let $\{S_k\}_{k=0}^\infty$ denote the (dilated) Chebyshev polynomials of the second kind, which are defined by the recursion \[S_0(x)=1, \qquad S_1(x) = x, \qquad S_1(x)S_k(x) = S_{k+1}(x) +S_{k-1}(x) \qquad (k \ge 1). \]  These are the monic orthogonal polynomials for Wigner's semicircle law, which is the probability measure supported on $[-2,2]$ with density $\frac{\sqrt{4-x^2}}{2\pi}$.  Following Section 4 of \cite{KuMiSp}, we let $\{\Pi_k\}_{k=0}^\infty$ be the family of polynomials defined by $\Pi_0(x) = 1$ and $\Pi_k(x) = S_k(x-2) +S_{k-1}(x-2)$ for each $k \ge 1$.  Then we have the recursion \begin{align} \label{eqn:recursion_Pi} \Pi_1(x)\Pi_k(x) = \Pi_{k+1}(x) +\Pi_{k}(x) + \Pi_{k-1}(x) \qquad (k \ge 1).  
\end{align}
The polynomials $\{\Pi_k\}_{k \in \N}$ are the monic orthogonal polynomials for the free Poisson law (with parameter 1), which is the probability measure supported on $[0,4]$ with density given by $\frac{\sqrt{4x-x^2}}{2\pi x}$.  Observe that one can equivalently define the family $\{\Pi_k\}_{k \in \N}$ by setting $\Pi_k(x) = S_{2k}(\sqrt{x})$.  To see this, note that $S_{2k}(x)$ is a polynomial of degree $2k$ which contains only terms of even degree.  Therefore we can define a degree $k$ polynomial $\tilde{\Pi}_k$ by setting $\tilde{\Pi}_k(x) = S_{2k}(\sqrt{x})$.  It is then easy to verify that $\tilde{\Pi}_0 = \Pi_0 = 1$, $\tilde{\Pi}_1(x) = x-1 = \Pi_1(x)$, and that the $\tilde\Pi_k$'s satisfy the recursion \eqref{eqn:recursion_Pi}.         

\begin{prop} \label{prop:char_alg}
Let $\chi \in C_u(\G)$ be the character of the fundamental representation $U$ of $\G = \G_{\text{aut}}(B, \psi)$.  Then $\chi = \chi^*$ and there is a $\ast$-isomorphism \[ \Psi: C^*\big(\chi_k:k \in \N\big) \to C(\sigma) \qquad \text{given by} \qquad \Psi:\chi_k \mapsto \Pi_k|_{\sigma}, \]  where $\sigma:=\sigma(\chi) \subset \R$ is the spectrum of $\chi$. 
\end{prop}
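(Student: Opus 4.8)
The plan is to reduce everything to a single observation: each irreducible character is a polynomial in $\chi$, precisely $\chi_k = \Pi_k(\chi)$, so that the C$^\ast$-algebra $C^*\big(\chi_k : k \in \N\big)$ is nothing but the commutative unital C$^\ast$-algebra $C^*(\chi)$ generated by the one self-adjoint element $\chi$, and the claimed isomorphism is then simply the continuous functional calculus for $\chi$. First I would record the self-adjointness. Since the fundamental representation satisfies $U \cong \overline{U}$ (as recalled in Section \ref{sect:reptheory}, and consistent with $U \cong U^0 \oplus U^1$ having self-conjugate summands by Theorem \ref{thm:fusion_rules_QAG}), and conjugate representations have complex-conjugate characters, we obtain $\chi^* = \chi$; the same reasoning gives $\chi_k^* = \chi_k$ for every $k$, as already noted in the discussion preceding the proposition.

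The heart of the argument is the identity $\chi_k = \Pi_k(\chi)$, which I would prove by induction on $k$. Characters are additive under $\oplus$ and multiplicative under $\boxtimes$ and depend only on the equivalence class of a representation, so the fusion rules of Theorem \ref{thm:fusion_rules_QAG} translate into $\chi_1\chi_k = \chi_{k+1} + \chi_k + \chi_{k-1}$ for $k \ge 1$. For the base cases, the decomposition $U \cong U^0 \oplus U^1$ gives $\chi = \chi_0 + \chi_1 = 1 + \chi_1$, whence $\chi_1 = \chi - 1 = \Pi_1(\chi)$, while $\chi_0 = 1 = \Pi_0(\chi)$. Rearranging the recursion \eqref{eqn:recursion_Pi} defining the $\Pi_k$ yields $\Pi_{k+1} = \Pi_1\Pi_k - \Pi_k - \Pi_{k-1}$, which matches exactly the rearranged fusion identity $\chi_{k+1} = \chi_1\chi_k - \chi_k - \chi_{k-1}$. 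Substituting the inductive hypothesis $\chi_j = \Pi_j(\chi)$ for $j \le k$ into these two recursions, together with $\chi_1 = \Pi_1(\chi)$, produces $\chi_{k+1} = \Pi_{k+1}(\chi)$ and closes the induction.

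It then remains to assemble the conclusion. Because every $\chi_k = \Pi_k(\chi)$ lies in $C^*(\chi)$, while conversely $\chi = 1 + \chi_1 \in C^*\big(\chi_k : k \in \N\big)$, the two C$^\ast$-algebras coincide: $C^*\big(\chi_k : k \in \N\big) = C^*(\chi)$. As $\chi$ is self-adjoint, this is the unital commutative C$^\ast$-algebra generated by a single self-adjoint operator, so the continuous functional calculus furnishes a $\ast$-isomorphism $\Psi : C^*(\chi) \to C(\sigma)$ with $\sigma = \sigma(\chi)$, characterized by $\Psi(\chi) = \id_\sigma$; hence $\Psi(\chi_k) = \Psi(\Pi_k(\chi)) = \Pi_k|_\sigma$, as asserted. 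I expect no serious obstacle here: self-adjointness and the functional calculus are routine, and the only genuine content is recognizing that the fusion recursion coincides with \eqref{eqn:recursion_Pi} once one identifies $\chi_1 = \chi - 1$. The single point demanding care is the bookkeeping of the base cases $k = 0, 1$ and the $k \ge 1$ range when matching the two recursions.
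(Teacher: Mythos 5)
Your proposal is correct and follows essentially the same route as the paper: both derive the character recursion $\chi_1\chi_k = \chi_{k+1}+\chi_k+\chi_{k-1}$ from the fusion rules, identify $C^*(\chi_k : k\in\N)$ with the unital C$^\ast$-algebra generated by the self-adjoint element $\chi = 1+\chi_1$, and invoke the Gelfand isomorphism, matching the recursion against \eqref{eqn:recursion_Pi}. The only cosmetic difference is that you establish $\chi_k = \Pi_k(\chi)$ by induction inside the algebra before applying the isomorphism, whereas the paper applies the Gelfand transform first and runs the same induction on functions over $\sigma$.
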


\begin{proof}
By Theorem \ref{thm:fusion_rules_QAG}, $U \cong 1 \oplus U^1$ and  each $\chi_k$ is self-adjoint.  Therefore $\chi = 1 + \chi_1 = (1 + \chi_1)^* = \chi^*$.  Moreover, the fusion rules for $\{U^k\}_{k \in \N}$ yield the character relations $\chi_1 \chi_k = \chi_{k+1} + \chi_k + \chi_{k-1}$ for all $k \ge 1$.  Thus $C^*\big(\chi_k: k \in \N\big) = C^*\big(1,\chi\big)$.  Let $\Psi:C^*\big(\chi_k: k \in \N\big) = C^*\big(1,\chi) \to C(\sigma)$ be the Gelfand isomorphism.  Then for any $t \in \sigma$ and $k\ge 1$, we have $\Psi\chi_1(t) \Psi\chi_k(t) = \Psi(\chi_1 \chi_k)(t) = \Psi\chi_{k+1}(t) + \Psi\chi_k(t) + \Psi\chi_{k-1}(t)$, with the initial conditions $\Psi\chi_0(t) = \Psi(1_{C_u(\G)})(t) = 1$ and $\Psi\chi_1(t) = \Psi(\chi-\chi_0)(t) = t-1$.  Comparing with the initial conditions and recursion (\ref{eqn:recursion_Pi}) for the polynomials $\{\Pi_k\}_{k \in \N}$, we obtain $\Psi\chi_k = \Pi_k|_{\sigma}$ ($k \in \N$). 
\end{proof}

This next proposition gives us some useful information about the spectrum $\sigma \subset \R$ of $\chi \in C_u(\G)$, at least in the $\delta$-trace case.

\begin{prop} \label{prop:spectrum}
Let $\psi$ be the $\delta$-trace on $B$ and let $\sigma = \sigma(\chi) \subset \R$ be as in Proposition \ref{prop:char_alg}.  Then $[0,\dim B] \subseteq \sigma$.
\end{prop}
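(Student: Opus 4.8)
The plan is to prove the inclusion pointwise: for each $t \in [0,\dim B]$ I will exhibit a $\ast$-representation $\rho\from C_u(\G) \to \mc B(K)$ on some Hilbert space $K$ with $t \in \sigma(\rho(\chi))$. Since $\rho$ is a $\ast$-homomorphism we have $\sigma(\rho(\chi)) \subseteq \sigma(\chi) = \sigma$, so this suffices. The useful reformulation here is the generators-and-relations description of Remark \ref{rem:QAG}: a representation $\rho$ of $C_u(\G)$ on $K$ is exactly a unital $\psi$-preserving $\ast$-homomorphism $\beta\from B \to B \otimes \mc B(K)$, via $\beta(b_i) = \sum_j b_j \otimes \rho(u_{ji})$, and under this correspondence $\rho(\chi) = (\Tr \otimes \id_K)V$ with $V = [\rho(u_{ij})]$ is the ``fibred trace'' of $\beta$. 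Thus the problem becomes: realize each $t$ as a spectral value of such a fibred trace, and I will do so by building $\beta$ out of simple pieces on the blocks of $B = \bigoplus_{i=1}^k M_{n_i}(\C)$.

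Two easy sources of spectral values feed into this. First, from the reduced picture: the $\Pi_k$ are the (orthonormal) orthogonal polynomials of $\chi$ with respect to the faithful Haar trace $h$, so the $L^2(\G)$-distribution of $\chi$ is the free Poisson law on $[0,4]$ and hence $\sigma_{C_r(\G)}(\lambda(\chi)) = [0,4]$; as $C_r(\G)$ is a quotient of $C_u(\G)$, spectrum can only shrink under the quotient, giving $[0,4] \subseteq \sigma$. Second, from classical $\psi$-preserving automorphisms: each $g \in \text{Aut}(B,\psi)$ is a one-dimensional representation with $\rho_g(\chi) = \Tr(g) \in \sigma$. Letting $g = \bigoplus_i \text{Ad}_{u_i}$ run through the inner automorphisms of the matrix blocks, the quantity $\Tr(g) = \sum_i |\text{tr}(u_i)|^2$ varies continuously over the connected group $\prod_i U(n_i)$ and therefore sweeps out the whole interval $[p,\dim B]$, where $p$ is the number of one-dimensional blocks of $B$; in particular the identity automorphism gives the top endpoint $\dim B = \epsilon(\chi)$. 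If $p \le 4$ these two ranges already cover $[0,\dim B]$ and the proof is complete.

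The main obstacle is the ``commutative directions'' when $p > 4$: the classical automorphisms of the abelian part $C(X_p) \subseteq B$ form the finite group $S_p$, whose traces take only finitely many integer values, so no continuous family of classical points fills the gap $(4,p)$ — genuinely quantum representations are forced. To produce them I will use the quantum permutation group $S_4^+$: acting by the universal magic unitary of $S_4^+$ on a four-element sub-block of the $p$ commutative points while fixing every other coordinate defines a valid $\beta$, hence a representation with $\rho(\chi) = \chi_4 + (\dim B - 4)$, where $\chi_4$ is the $S_4^+$-character; since $S_4^+$ is co-amenable, $\sigma(\chi_4) = [0,4]$, and this contributes the interval $[\dim B-4,\dim B]$. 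More generally, placing independent $S_4^+$-actions on $j \le \lfloor p/4\rfloor$ disjoint four-element sub-blocks together with inner automorphisms on the matrix blocks yields mutually commuting summands, so that $\rho(\chi) = \sum_i |\text{tr}(u_i)|^2 + (p-4j) + \sum_{l=1}^j \chi_4^{(l)}$ and $\sigma(\rho(\chi))$ contains $\big[\,p-4j,\ \dim B\,\big]$ as the parameters $u_i$ vary. Taking $j = \lfloor p/4\rfloor$ gives $[\,p \bmod 4,\ \dim B\,] \subseteq \sigma$, and since $p \bmod 4 \le 3 < 4$ this overlaps the reduced interval $[0,4]$, closing the last residual gap and yielding $[0,\dim B] \subseteq \sigma$. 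The points still requiring verification are routine but essential: that each block construction genuinely defines a unital $\psi$-preserving $\ast$-homomorphism (so that it corresponds to a representation of $C_u(\G)$), and that $\rho(\chi)$ decomposes as the asserted sum of commuting operators with additive spectra.
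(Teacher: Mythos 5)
Your proof is correct, and its skeleton coincides with the paper's: both start from the observation that the $\Pi_k$ are the orthonormal polynomials of the spectral measure of $\chi$ with respect to the Haar state, so that the free Poisson law yields $[0,4] \subseteq \sigma$, and both then produce the rest of the interval by feeding blockwise actions on $B$ through the universal property of $C_u(\G)$ and using additivity of spectra across commuting tensor factors. The genuine difference is in how the commutative part $C(X_p)$ is handled, which is exactly where quantum phenomena are forced. The paper maps $S_p^+$ onto the free product $\ast_{l=1}^s C^*(\Z_2)$ via a block-diagonal magic unitary with $2\times 2$ blocks and invokes the fact (de la Harpe--Robertson--Valette) that a sum of $s$ free projections has spectrum $[0,s]$; there, \emph{free} independence is what produces a connected spectrum. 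You instead tile the $p$ points by $\lfloor p/4 \rfloor$ disjoint four-point blocks carrying \emph{tensor}-independent copies of $S_4^+$ and reuse the free Poisson fact (step one of the same proof, applied to $S_4^+$) to get a contribution of $[0,4]$ per block, so that elementary Gelfand theory replaces the free-probability input; this is somewhat more self-contained (co-amenability of $S_4^+$ is not even needed, since the support of the spectral measure lies in the spectrum in any unital C$^\ast$-algebra), at the cost of a residual gap $[0, p \bmod 4]$ that your $[0,4]$ piece must close -- which it does, as $p \bmod 4 \le 3$. For the matrix blocks the two arguments agree up to cosmetics ($U(n_i)$-inner automorphisms with $|\text{tr}(u_i)|^2$ sweeping $[0,n_i^2]$, versus the paper's $SO(n_i)$ and $\chi_{(i)}^2$). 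One caution about your framing: the asserted bijection between representations of $C_u(\G)$ on $K$ and unital $\psi$-preserving $\ast$-homomorphisms $\beta \from B \to B \otimes \mc B(K)$ is more than you need and is not obvious in the direction you invoke it ($\psi$-invariance plus multiplicativity give $V^*V = 1$, but surjectivity of $V$ requires an argument when $K$ is infinite dimensional); however, every $\beta$ you actually build is an action of a genuine compact quantum group (a tensor product of copies of $S_4^+$ with a classical compact group), so Definition \ref{defn:QAG}'s universal property -- or the manifest unitarity of your block-diagonal $V$ together with Remark \ref{rem:QAG} -- supplies the needed morphism out of $C_u(\G)$ without appealing to that general correspondence.
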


\begin{rem}
We suspect that the equality $[0,\dim B] = \sigma$ always holds in Proposition \ref{prop:spectrum}, but we are unable to show that $\chi \ge 0$ in general.  
\end{rem}

\begin{proof}
Let $\mu$ be the spectral measure of $\chi$ relative to the Haar state $h$.  Since the irreducible characters of a compact quantum group  form an orthonormal family with respect to the $L^2(\G)$-inner product, Proposition \ref{prop:char_alg} yields \[\int_{\text{supp}(\mu)} \Pi_k(t)\Pi_l(t)d\mu(t) = h(\Pi_k(\chi)^*\Pi_l(\chi)) = h(\chi_k^*\chi_l) = \delta_{kl}  \qquad (k,l \in \N).\]  I.e., $\{\Pi_k\}_{k \in \N}$ are the monic orthogonal polynomials for $\mu$. Therefore $\mu$ must be the free Poisson law,  and in particular $[0,4]=\text{supp}(\mu) \subseteq \sigma$.

We now show that $[4,\dim B] \subset \sigma$.  Write $B$ as a direct sum of matrix algebras, say \[B = C(X_{0}) \bigoplus \bigoplus_{1 \le i \le k, \ n_i \ge 2}M_{n_i}(\C),\] where $C(X_0)$ denotes the maximal commutative ideal of $B$ (which may be zero).  Let $n_0 = |X_0|$ so that $\dim B = n_0 + \sum_{i=1}^k n_i^2$.   We will assume for the remainder of the proof that both of the components $C(X_{0})$ and $\bigoplus_{1 \le i \le k, \ n_i \ge 2}M_{n_i}(\C)$ are non-zero.  (The trivial modification of our argument for the remaining two cases is left to the reader).  

Let $\G_0 = S_{n_0}^+$ and let $\G_i = SO(n_i)$ for $1 \le i \le k$.  For each $i$, let $\beta_i:B_i \to B_i \otimes C_u(\G_i)$ be the canonical action of $\G_i$ on $(B_i,\psi_i)$, where $B_i$ is the $i$th direct summand of $B$ and $\psi_i = \psi|_{B_i}$.  (In other words, $\beta_0$ is the fundamental action of $S_{n_0}^+$ on $C(X_{0})$  and $\beta_i$ is the standard action of $SO(n_i)$ on $M_{n_i}(\C)$ by inner automorphisms.)  Now let $\G' = \bigotimes_{i=0}^{k} \G_i$ be the tensor product quantum group (see \cite{Ti}, Chapter 6) and consider the diagonal action \[\beta:B \to B \otimes C_u(\G'); \qquad  \beta = \bigoplus_{0 \le i \le k} (\id_B \otimes \iota_i)\beta_i,\] of $\G'$ on $(B, \psi)$, where $\iota_i:C_u(\G_i) \hookrightarrow \C_u(\G')$ is the canonical embedding.  Invoking  Definition \ref{defn:QAG}, we see that there exists a unital $\ast$-homomorphism $\pi:C_u(\G) \to C_u(\G')$ such that $\beta = (\id_B \otimes \pi)\alpha$, where $\alpha:B \to B \otimes C_u(\G)$ is the universal action of $\G$ on $(B, \psi)$.  Moreover, since $\beta$ is faithful in the sense of \cite{Wa98}, it follows that $\pi$ is surjective.  In particular, if $U'$ denotes the representation of $\G'$ associated to the action $\beta$ and $\chi'$ is its character, then $\chi' = \pi(\chi)$ and $\sigma(\chi') \subseteq \sigma$.  It therefore suffices to show that $[4,\dim B] \subseteq \sigma(\chi')$.   

Let $\chi_{(0)} \in C_u(\G_0)$ denote the fundamental character of $\G_0$, and let $\chi_{(i)} \in C(SO(n_i))$ denote the character of the standard representation of $SO(n_i)$ on $\C^{n_i}$.  A simple calculation using the definition of $\G'$ and the action $\beta$ shows that \[\chi' = \chi_{(0)} \otimes \underbrace{1 \otimes \ldots \otimes 1}_{k \text{ times}} + \sum_{i=1}^k \underbrace{1 \otimes \ldots \otimes 1}_{i \text{ times}} \otimes \chi_{(i)}^2 \otimes \underbrace{1 \otimes \ldots \otimes 1}_{k-i \text{ times}},\] and therefore (by tensor independence) $\sigma(\chi') = \sigma(\chi_{(0)}) + \sum_{i=1}^k \sigma(\chi_{(i)}^2)$.  Moreover, an elementary exercise in linear algebra shows that for $i \ge 1$, $\chi_{(i)} = \chi_{(i)}^*$, $\|\chi_{(i)}\| = n_i$ and $[0,n_i] \subset \sigma(\chi_{(i)}).$  Therefore \begin{align} \label{eqn:spec}\sigma(\chi') = \sigma(\chi_{(0)}) + \sum_{i=1}^k [0,n_i^2] = \sigma(\chi_{(0)}) + [0,\dim B - n_0].
\end{align}

To finish the proof, we analyze the spectrum $\sigma(\chi_{(0)})$ of $\chi_{(0)} \in C_u(\G_0)$.  If $1 \le n_0 \le 4$, then $4 \ge n_0 = \epsilon(\chi_{(0)}) \in \sigma(\chi_0)$, and \eqref{eqn:spec} yields  $[4,\dim B] \subset [n_0, \dim B] \subseteq \sigma(\chi')$.  If $n_0 \ge 5$, write $n_0 = 2s+r$ where $s \ge 2$ and $r \in \{0,1\}$.  Since $C_u(\G_0) = C_u(S_{n_0}^+)$ is the universal C$^\ast$-algebra generated by an $n_0 \times n_0$ magic unitary matrix  $[v_{ij}]$, there exists a unique surjective $\ast$-homomorphism  $\pi_0:C_u(\G_0) \to \ast_{l=1}^s C^*(\Z_2)$ which sends $[v_{ij}]$ to the block-diagonal matrix $W \in M_{n_0}\big(\ast_{l=1}^s C^*(\Z_2)\big)$ given by
\[ W = \left\{\begin{array}{cc}\bigoplus_{j=1}^s \left(\begin{array}{cc} p_j & 1-p_j \\
1-p_j & p_j  \end{array} \right) & \text{if } r= 0,\\
1 \oplus \bigoplus_{j=1}^s\left(\begin{array}{cc} p_j & 1-p_j \\
1-p_j & p_j  \end{array} \right) & \text{if } r=1. \end{array} \right. \]
Here $p_j$ is the projection $\frac{1}{2}(1-g_j)$, where $g_j$ is the canonical unitary generator of the $j$th copy of $C^*(\Z_2)$ in the free product $\ast_{l=1}^s C^*(\Z_2)$.  We then have $\pi_0(\chi_{(0)}) = r1 + \sum_{j=1}^s 2p_j$.  Since the spectrum of a sum of $s$ free projections is well known to equal $[0,s]$ (see \cite{dlHRoVa}, Example 4), we obtain $\sigma(\pi_0(\chi_{(0)})) = [r, 2s + r] \subset \sigma(\chi_{(0)})$.  Using this last inclusion in (\ref{eqn:spec}), we conclude that $\sigma(\chi') \supset [r,\dim B] \supset [4,\dim B]$.  
\end{proof}

We are now ready to prove Theorem \ref{thm:HAP_Kac}.

\begin{proof}[Proof of Theorem \ref{thm:HAP_Kac}]
If $1 \le \dim B \le 4$ then $\G$ is co-amenable by \cite{Ba99}, Corollary 4.2, and it follows from \cite{BeMuTu}, Theorem 1.1, that $L^\infty(\G)$ is injective.  In particular, $L^\infty(\G)$ has the Haagerup property.    

Assume now that $\dim B \ge 5$ and fix $4 < t_0 < 5$.  From the isomorphism $C^*\big(\chi_k: k \in \N\big) \cong C(\sigma)$ given by Proposition \ref{prop:char_alg} together with the fact that $[0,\dim B] \subseteq \sigma$ from Proposition \ref{prop:spectrum}, it follows that for each $t \in [t_0, \dim B)$, there is a unique state $\varphi_t \in C^*\big(\chi_k: k \in \N\big)^*$ given by \[\varphi_t(\chi_k) = \Pi_k(t) \qquad (k \in \N).\]  Consider the irreducible representation $U^k = [u^k_{ij}]$ of $\G$ with label $k$.  From part \eqref{dims} of Theorem \ref{thm:fusion_rules_QAG} and the recursion \eqref{eqn:recursion_Pi}, we have $d_k = \Pi_k(\dim B)$ for all $k \in \N$.  Applying Theorem \ref{thm:average_Kac} to the family of states $\{\varphi_t\}_{t \in [t_0,\dim B)} \subset C^*\big(\chi_k: k \in \N\big)^*$, we obtain a net of central completely positive multipliers $\{\Phi_t\}_{t \in [t_0, \dim B)} \subset \mc{CB}(L^\infty(\G))$ defined  by \[\Phi_t(\lambda(u^k_{ij})) = \frac{\Pi_k(t)}{\Pi_k(\dim B)}\lambda(u_{ij}^k) \qquad (k \in \N, 1 \le i,j \le d_{k}).\]  Here $[t_0, \dim B) \subset \R$ is directed in the usual way.  

We  now show that this net of normal unital completely positive maps is $L^2$-compact and converges pointwise in $L^2$-norm to the identity map.  Using the decomposition $L^2(\G) = \bigoplus_{k \in \N} L^2_k(\G)$ given by \eqref{eqn:L2decomp}, each $L^2$-extension $\hat{\Phi}_t \in \mc B(L^2(\G))$ is given by $\hat{\Phi}_t = \sum_{k \in \N}  \frac{\Pi_k(t)}{\Pi_k(\dim B)} P_k$.  Since $\lim_{t \to \dim B} \frac{\Pi_k(t)}{\Pi_k(\dim B)} = 1$ for each $k$ and $\|\hat \Phi_t\| = 1$ for each $t$, it follows that $\lim_{t \to \dim B}\|\hat{\Phi}_t \xi- \xi\|_{L^2(\G)} = 0$ for each $\xi \in L^2(\G)$.  To see that each $\hat{\Phi}_t$ is compact, observe that since $\hat{\Phi}_t$ is the orthogonal direct sum of the finite rank projections $\{P_k\}_{k \in \N}$, it suffices to show that $\Big\{ \frac{\Pi_k(t)}{\Pi_k(\dim B)} \Big\}_{k \in \N} \in c_0(\N)$ for each $t \in [t_0,\dim B)$.  To check this last fact, note that $0 < \frac{\Pi_k(t)}{\Pi_k(\dim B)} = \frac{S_{2k}(\sqrt{t})}{S_{2k}(\sqrt{\dim B})}$,
and an easy exercise (see for example \cite{Br}, Proposition 4.4) shows that there is a constant $A(t_0) > 0$ (depending only on $t_0$) such that \[\frac{S_{2k}(\sqrt{t})}{S_{2k}(\sqrt{\dim B})} \le A(t_0) \Big( \frac{\sqrt{t}}{\sqrt{\dim B}}\Big)^{2k} = A(t_0) \Big( \frac{t}{\dim B}\Big)^{k} \longrightarrow 0 \qquad (k \to \infty). \]
\end{proof}

\subsection{The Property of Rapid Decay} \label{section:RD}

Given a finitely generated discrete group $\Gamma$ with symmetric generating set $S = \{\gamma_1, \ldots, \gamma_s\}$, there is a natural length function $\ell = \ell_S: \Gamma \to \N$ defined by assigning to each $\gamma \in \Gamma$ the length of its shortest representation as a reduced word in the elements of $S$.  The group $\Gamma$ is said to have \textit{the property of rapid decay (with respect to $\ell$)}, or just property RD, if there exists a polynomial $P \in \R_+[x]$ such that for each $n \in \N$ and each  function $f:\{\gamma \in \Gamma  \ | \ \ell(\gamma) = n\} \to \C$ supported on words in $\Gamma$ of length $n$, the following inequality holds \[\|\lambda(f)\|_{VN(\Gamma)} \le P(n)\|f\|_{\ell^2(\Gamma)}.\]
It is a standard fact that property RD is essentially independent of the initial generating set $S$ for $\Gamma$.  Perhaps the most famous example of a group with property RD is the non-abelian free group $\F_d$ on $d \ge 2$ generators, which was shown by Haagerup \cite{Ha} to have property RD with $P(x) = x+1$ for each $d\ge 2$.   

The notion of property RD for discrete quantum groups was defined and studied by Vergnioux in \cite{Ve}.  There he showed that property RD can only occur for unimodular discrete quantum groups, and he also exhibited the first (and what currently appear to be the only) truly ``quantum'' examples having this property.  The goal in this section is to prove that if $B$ is a finite dimensional C$^\ast$-algebra with $\delta$-trace $\psi$, then the dual unimodular discrete quantum group $\hG$ of $\G = \G_{\text{aut}}(B,\psi)$ has property RD.  We will use this result to obtain some finite rank approximation properties for $L^\infty(\G), C_r(\G)$ and $L^1(\G)$ in Theorem \ref{thm:further_approx}.  Property RD will also be a key tool in our analysis of simplicity and uniqueness of trace for $C_r(\G)$ in Section \ref{section:simplicity}.  

For our purposes, property RD for $\hG$ will be most conveniently formulated in dual terms, as an equivalent property of $L^\infty(\G)$.  More precisely, we will prove the following theorem.

\begin{thm} \label{thm:RD}
Let $\psi$ be the $\delta$-trace on a finite dimensional C$^\ast$-algebra $B$ and let $C_{n}(\G) = \text{span}\{(\omega_{\eta,\xi} \otimes \id)U^n : \xi, \eta \in H_n\} \subset C_u(\G)$ be the subspace of matrix elements of the $n$th irreducible representation of $\G$.  Then there exists a constant $D > 0$ (depending only on $\dim B$) such that \[\|\lambda(a)\|_{L^\infty(\G)} \le D(2n+1) \|\Lambda_h(a)\|_{L^2(\G)} \qquad (n \in \N, a \in C_{n}(\G)).\]  In particular, each orthogonal projection $P_n:L^2(\G) \to L^2_n(\G)$ is bounded as a map from $L^2(\G)$ into $C_r(\G)$ with $\|P_n\|_{L^2 \to L^\infty} \le D(2n+1)$.   
\end{thm}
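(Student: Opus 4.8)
The plan is to establish the Haagerup-type inequality $\|\lambda(a)\| \le D(2n+1)\|\Lambda_h(a)\|_2$ on each isotypic block $C_n(\G)$ by controlling the convolution operator associated to $a$ through the fusion-graded structure of $L^\infty(\G)$. Since $\G$ has $SO(3)$-type fusion rules (Theorem \ref{thm:fusion_rules_QAG}), an element $a \in C_n(\G)$ acts by left convolution $\lambda(a)$ on $L^2(\G) = \bigoplus_{k} L^2_k(\G)$, and the fusion rule $U^n \boxtimes U^k \cong \bigoplus_{r=0}^{2\min\{n,k\}} U^{n+k-r}$ tells us that $\lambda(a)$ maps $L^2_k(\G)$ into the finite sum $\bigoplus_{l : |n-k| \le l \le n+k,\, l \equiv n+k} L^2_l(\G)$. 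The key point is that for fixed $n$ this sum involves at most $2n+1$ distinct summands, so $\lambda(a)$ decomposes as a sum of at most $2n+1$ "partial convolution" operators $\lambda(a)_{l,k}: L^2_k(\G) \to L^2_l(\G)$, one for each admissible value of $l$. By the triangle inequality it suffices to bound each such operator by $D'\|\Lambda_h(a)\|_2$, with a constant $D'$ independent of $n,k,l$; summing over the $\le 2n+1$ admissible values then yields the claim.

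First I would reduce to estimating the operator norm of a single graded component. Writing $a = (\omega_{\eta,\xi} \otimes \id)U^n$ and using the explicit isometric morphisms $C_{(n,k,l)}^{-1/2}\rho_l^{n\boxtimes k} \in \Mor(U^l, U^n \boxtimes U^k)$ constructed in Section \ref{expmodels}, I would express the matrix coefficients of $\lambda(a)_{l,k}$ in terms of these intertwiners. The operator $\lambda(a)_{l,k}$ is, up to the Kac-type normalizations $d_l^{1/2}, d_k^{1/2}$ identifying each $L^2_k(\G)$ with $H_k \otimes H_{\overline k}$ via \eqref{eqn:L2unitarymap}, essentially a contraction of the vector $\Lambda_h(a)$ against the intertwiner $\rho_l^{n\boxtimes k}$. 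The norm of this operator can then be controlled by $\|\Lambda_h(a)\|_2$ times a factor built from the quantum dimensions $d_n, d_k, d_l$ and the normalization constant $C_{(n,k,l)}$ in \eqref{eqn:norm_intertwiner}.

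The heart of the argument, and what I expect to be the main obstacle, is showing that this dimension/normalization factor is bounded \emph{uniformly} in $n,k,l$ (subject to $U^l \subset U^n \boxtimes U^k$), so that the only $n$-dependence in the final bound comes from the count of $2n+1$ summands. Concretely, one must prove that the quantity
\begin{align*}
\Big(\frac{d_l}{d_k}\Big)^{1/2} C_{(n,k,l)}^{1/2} \quad \text{or an analogous combination}
\end{align*}
stays bounded as $n,k \to \infty$. This is where the precise $q$-factorial formula \eqref{eqn:norm_intertwiner} together with the quantum-dimension identity $m_k = [2k+1]_q$ must be exploited: one rewrites everything in terms of $q$-numbers and shows the ratios telescope or cancel, leaving a uniformly bounded expression. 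Since $\delta = q + q^{-1} \ge 2$ (with $q \le 1$, and $q < 1$ when $\dim B \ge 5$), the $q$-numbers grow geometrically, and the relevant ratios of $q$-factorials should be estimated by comparing leading exponents of $q$; the case $q = 1$ (i.e.\ $\delta = 2$, $\dim B = 4$) is co-amenable and handled separately or trivially.

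Finally, to obtain the stated consequence for $P_n$, I would note that $P_n = \hat\lambda$ where the projection onto $L^2_n(\G)$ is itself implemented by elements of $C_n(\G)$: given $\xi \in L^2(\G)$, the map $\xi \mapsto \lambda(P_n\xi) \in C_r(\G)$ is exactly of the form above with $a = P_n\xi \in C_n(\G)$, and since $P_n$ is an orthogonal projection we have $\|\Lambda_h(a)\|_2 = \|P_n\xi\|_2 \le \|\xi\|_2$. The main inequality then gives $\|\lambda(P_n\xi)\|_{L^\infty(\G)} \le D(2n+1)\|\xi\|_2$, which is precisely $\|P_n\|_{L^2 \to L^\infty} \le D(2n+1)$.
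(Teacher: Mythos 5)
Your outer reduction coincides with the paper's and is sound: by the fusion rules of Theorem \ref{thm:fusion_rules_QAG}, $\lambda(a)$ for $a \in C_n(\G)$ has a band structure with at most $2n+1$ nonzero block diagonals, so it suffices to prove a block estimate $\|P_l\lambda(a)P_k\| \le D\|\Lambda_h(a)\|_2$ with $D$ independent of $n,k,l$; grouping the blocks by offset and applying the triangle inequality (or the paper's direct $\ell^2$ computation with the cardinality bounds $|A_{l,n}|, |B_{k,n}| \le 2n+1$) then yields the factor $2n+1$. One slip here: the $SO(3)$-type fusion rules carry \emph{no} parity constraint, so $l$ runs over all integers in $[|n-k|, n+k]$, not only those with $l \equiv n+k \pmod 2$; your count of $2n+1$ is nevertheless correct, and the paper stresses that precisely this lack of parity is what distinguishes the present case from the free orthogonal one.

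The genuine gap is in the block estimate itself, which you reduce to "a factor built from the quantum dimensions and $C_{(n,k,l)}$" being uniformly bounded, to be checked by telescoping $q$-factorials. This misplaces the difficulty, and as stated the step fails. Passing to the dual side via the Fourier transform and Lemma \ref{lem:RD_L2norm} (in the Kac case), the block estimate is equivalent to
\begin{equation*}
\big\|(\rho_l^{n\boxtimes k})^*(\hat{a}\otimes y)\rho_l^{n\boxtimes k}\big\|_{\mc{HS}} \;\le\; D\, C_{(n,k,l)}\Big(\tfrac{d_l}{d_nd_k}\Big)^{1/2}\|\hat{a}\|_{\mc{HS}}\,\|y\|_{\mc{HS}},
\end{equation*}
and since $C_{(n,k,l)} \asymp 1$ and $d_nd_k/d_l \asymp [r+1]_q^2$ (where $l = n+k-r$), the right-hand side is $\asymp [r+1]_q^{-1}\|\hat{a}\|_{\mc{HS}}\|y\|_{\mc{HS}}$. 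The trivial estimate --- operator norms of the contractive intertwiners times the Hilbert--Schmidt norm of $\hat{a}\otimes y$ --- gives only $\|\hat{a}\|_{\mc{HS}}\|y\|_{\mc{HS}}$, i.e.\ it is off by the factor $[r+1]_q \sim q^{-r}$, which is unbounded over admissible $(n,k,l)$. No bookkeeping with $d_n, d_k, d_l, C_{(n,k,l)}$ can recover this loss; indeed the specific combination you propose to bound, $(d_l/d_k)^{1/2}C_{(n,k,l)}^{1/2}$, is itself unbounded (take $l = n+k$). The heart of the paper's proof (Proposition \ref{prop:RD_HS}) is precisely the analytic gain of $[r+1]_q^{-1}$: one identifies $H_n, H_k$ with highest-weight subspaces of tensor products, expands $x$ and $y$ in matrix units over the middle legs, uses the explicit form $t_r = [r+1]_q^{-1/2}(p_r\otimes p_r)\cup^{(r)}$ of the nested-cup morphism through which $\rho_l^{n\boxtimes k}$ factors (with separate and unequal computations for $r$ even and odd), and applies Cauchy--Schwarz to the contracted sum $\sum_{i,j}x_{i,j}\otimes y_{\check{i},\check{j}}$; the normalization $[r+1]_q^{-1/2}$, occurring once on each side, is exactly what produces $[r+1]_q^{-1}$. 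Two further points: the uniform lower bound $C_{(n,k,l)} \ge D_0$ is not a telescoping exercise but is imported from Vaes--Vergnioux (Lemma \ref{lem:bounds_constants}), and the remaining numerical comparison $[r+1]_q^2 d_l/(d_nd_k) \ge D' > 0$ genuinely requires $q < 1$, so your instinct to treat $\dim B = 4$ (i.e.\ $q=1$) separately via co-amenability is well placed.
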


\begin{rem} 
Note that the O$(2n+1)$ growth rate of the constants in Theorem \ref{thm:RD} is optimal.  To see this, consider the character $\chi_n \in C_{n}(\G)$ of the irreducible representation $U^n$.  Using the notation and arguments from the proof of Proposition \ref{prop:spectrum}, it follows that $\|\lambda(\chi_n)\| = \sup_{t \in [0,4]}|\Pi_n(t)| = \sup_{t \in [0,4]}|S_{2n}(\sqrt{t})|= S_{2n}(2) = 2n+1$.  Noting that $\|\Lambda_h(\chi_n)\|_{L^2(\G)} = 1$, we see that the O$(2n+1)$ bound is actually obtained by the sequence $\{\lambda(\chi_n)\}_{n \in \N}$.       
\end{rem}

Our proof of Theorem \ref{thm:RD} is based on the ideas in Section 4 of \cite{Ve}, where property RD is proved for the duals of free orthogonal and free unitary quantum groups.  The main difference between the present situation and the one in \cite{Ve} is that for quantum automorphism groups, the fusion rules $U^n\boxtimes U^k \cong \bigoplus_{r=0}^{2\min\{n,k\}}U^{k+n-r}$ imply that each tensor product $U^n\boxtimes U^k$ contains both \textit{even} and \textit{odd} length subrepresentations, whereas in the orthogonal case all subrepresentations of $U^n\boxtimes U^k$ have the same parity.  This has the effect of complicating some norm estimates.    

To begin the proof of Theorem \ref{thm:RD}, note that it suffices to show that there is a constant $D>0$ (depending only on $\dim B$) such that for each $n \in \N$ and each $a \in C_{n}(\G)$, 
\begin{align} \label{eqn:RD_reduction}
\|P_l  \lambda(a) P_k\|_{\mc B(L^2(\G))} \le D\|\Lambda_h(a)\|_{L^2(\G)} \qquad (k,l \in \N).
\end{align}
Indeed, assume that \eqref{eqn:RD_reduction} holds and take $n \in \N$,  $a \in C_{n}(\G)$ and $\xi \in L^2(\G)$.  Using the fusion rules from Theorem \ref{thm:fusion_rules_QAG}, it is not difficult to see that the sets $A_{l,n} = \{k \in \N: \ U^l \subset U^n \boxtimes U^k \}$ and $B_{k,n} = \{l \in \N: \ U^l \subset U^n \boxtimes U^k \}$ both have cardinality at most $2n+1$. Therefore
\begin{align*}
&\|\lambda(a)\xi\|_2^2 = \sum_{l \in \N}\|P_l\lambda(a)\xi\|_2^2 = \sum_{l \in \N}\Big\|\sum_{k \in \N: \ U^l \subset U^n \boxtimes U^k}P_l\lambda(a)P_k\xi\Big\|_2^2 \\
&\le \sum_{l \in \N} \Big( \sum_{k \in \N: \ U^l \subset U^n \boxtimes U^k} \|P_k\lambda(a)P_k\xi\|\Big)^{2} \le \sum_{l \in \N} \Big( \sum_{k \in \N: \ U^l \subset U^n \boxtimes U^k} D\|\Lambda_h(a)\|_2\|P_k\xi\|_2\Big)^{2} \\
&\le D^2\|\Lambda_h(a)\|_2^2\sum_{l \in \N}  \big|A_{l,n}\big| \Big(\sum_{k \in \N: \ U^l \subset U^n \boxtimes U^k} \|P_k\xi\|_2^2\Big) \\
&\le D^2\|\Lambda_h(a)\|_2^2 (2n+1) \sum_{k \in \N}\Big(\sum_{l \in \N: \ U^l \subset U^n \boxtimes U^k} \|P_k\xi\|_2^2\Big) \\
&= D^2\|\Lambda_h(a)\|_2^2 (2n+1) \sum_{k \in \N}\big|B_{k,n}\big|\|P_k\xi\|_2^2 \le D^2\|\Lambda_h(a)\|_2^2 (2n+1)^2  \|\xi\|_2^2, \end{align*}
which yields $\|\lambda(a)\|_{L^\infty(\G)} \le D(2n+1) \|\Lambda_h(a)\|_{L^2(\G)}$.

The remainder of this section is devoted to proving \eqref{eqn:RD_reduction}.  To do this, we utilize the Fourier transform defined by Podle\'s and Woronowicz in \cite{PoWo}.  Let $\G$ be a compact quantum group and let  $\mathbb V  \in M(c_0(\hG) \otimes C_u(\G))$ be the unitary given by \eqref{eqn:multunitary}.  Define 
\begin{align} \label{eqn:FT}
\mathcal F: C_{\text{alg}}(\G) \to \C[\hG]; \qquad \mathcal F(a) = (\id \otimes ah) \mathbb V^* \qquad (a \in C_u(\G)),
\end{align}  
where $ah \in C_u(\G)^*$  is given by $ah(b) := h(ba)$, ($b \in C_u(\G)$). 
Using the descriptions of $h$ and $\hat h_L$ given by \eqref{eqn:Haar} and  \eqref{eqn:weights}, it is readily checked that $\mathcal F: C_{\text{alg}}(\G) \to \C[\hG]$ is a linear bijection with inverse 
\begin{align} \label{eqn:IFT}
\mathcal F^{-1}:\C[\hG] \to C_{\text{alg}}(\G); \qquad \mathcal F^{-1}(x) = (x\hat h_L \otimes \id) \mathbb V \qquad (x \in \C[\hG]), 
\end{align}
where $x\hat h_L (y):= \hat h_L(yx)$, ($y \in \ell^\infty(\hG)$).  Indeed, a simple calculation shows that $\mc F (u_{ij}^\alpha) = m_{\alpha}^{-1}\sum_{1 \le r \le d_\alpha}(Q_\alpha^{-1})_{ir}e_{jr}^\alpha$, where $\{e_{ij}^\alpha\}$ denotes the system of matrix units for $\mc B(H_\alpha)$ associated to the matrix representation $U^\alpha = [u^\alpha_{ij}]$. This calculation also shows that $\mc F$ extends to a unitary isomorphism $\mathcal F : L^2(\G) \to \ell^2(\hG) := L^2(\C[\hG], \hat h_L)$.  

Recall that one can define a \textit{convolution product} $\Conv: \C[\hG] \otimes_{\text{alg}} \C[\hG] \to \C[\hG]$ using the dual coproduct $\hat\Delta$ as follows. Given $x,y \in \C[\hG]$, $\Conv(x \otimes y) \in \C [\hG]$ is defined to be the unique element, denoted by $x*y$,    
such that $(x*y)\hat h = (x\hat h \otimes y\hat h) \circ \hat \Delta$.  With respect to the usual algebra structure on $C_{\text{alg}}(\G)$ and the convolution product on $\C[\G]$, the Fourier transform $\mc F: C_{\text{alg}}(\G) \to (\C[\hG], \Conv)$  becomes an algebra isomorphism.  Indeed, since $(\hat \Delta \otimes \id)\mathbb V = \mathbb V_{13}\mathbb V_{23}$, we have
\begin{align*}
\mc F^{-1}(x*y) &= (x\hat h_L \otimes y\hat h_L \otimes id)(\hat \Delta \otimes \id)\mathbb V =  (x\hat h_L \otimes y\hat h_L \otimes id)(\mathbb V_{13}\mathbb V_{23}) \\
&=  (x\hat h_L \otimes \id)\mathbb V \cdot (y\hat h_L \otimes id)\mathbb V = \mc F^{-1} x \mc F^{-1} y.
\end{align*}   
As a final useful observation, we note that $\mc F P_l \mc F^{-1} = \hat P_l$, where $\{\hat P_l\}_{l \in \N}$ are the canonical minimal central 
projections in $\ell^\infty(\hG) = \prod_{l \in \N}\mc B (H_l) \subset \mc B(\ell^2(\hG))$.  In view of the above properties of the Fourier transform, we can transfer our required condition \eqref{eqn:RD_reduction} to an equivalent one on the dual quantum group $\hG$.  Namely, we must find a $D>0$ (depending only on $\dim B$) such that
\begin{align} \label{eqn:RD_reduction2}
\|\hat P_l (x*y)\|_{\ell^2(\hG)} \le D \|x\|_{\ell^2(\hG)} \|y\|_{\ell^2(\hG)} \qquad (x \in \hat P_n \C[\hG], \ y \in \hat P_k \C[\hG], \ l \in \N ).
\end{align}
The following useful result provides an alternate expression for the left-hand side of \eqref{eqn:RD_reduction2} and can be found in \cite{Ve}, Lemma 4.6.  For the convenience of the reader we include a proof.

\begin{lem} \label{lem:RD_L2norm}
Let $\G = \G_{\text{aut}}(B,\psi)$, where $\psi$ is a $\delta$-form on $B$, and let $w \in \hat P_n \C[\hG] \otimes \hat P_k \C[\hG]$. Then for any $l \in \N$ such that $U^l \subset U^n \boxtimes U^k$, we have \[
\|\hat P_l \Conv(w)\|_{\ell^2(\hG)} = \big( m_{n}m_km_l^{-1} \big)^{1/2} \|\hat\Delta(\hat P_l)w \hat\Delta(\hat P_l) \|_{\ell^2(\hG) \otimes \ell^2(\hG)}. \] 
\end{lem}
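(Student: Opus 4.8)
The plan is to realize the convolution product on $\C[\hG]$ concretely through the isometric intertwiner supplied by the explicit models of Section \ref{expmodels}, and then to compare the two Hilbert--Schmidt-type norms head on. Since $U^l \subset U^n \boxtimes U^k$ is multiplicity-free by Theorem \ref{thm:fusion_rules_QAG}, I would fix the isometry $V := C_{(n,k,l)}^{-1/2}\rho_l^{n\boxtimes k} \in \Mor(U^l, U^n\boxtimes U^k)$, so that $V^*V = \id_{H_l}$ and $VV^*$ is the orthogonal projection of $H_n\otimes H_k$ onto its $U^l$-isotypic component. First I would record the two structural identities that drive everything. From the defining relation $\hat\Delta(z)S = Sz$ for morphisms $S$ one gets $\hat\Delta(z)V = Vz$ for $z\in\hat P_l\C[\hG]=\mc B(H_l)$; since the central projection $\hat P_n\otimes\hat P_k$ commutes with $\hat\Delta(\hat P_l)$, multiplicity-freeness then gives $\hat\Delta(\hat P_l)(\hat P_n\otimes\hat P_k) = VV^*$ and hence $\hat\Delta(z)(\hat P_n\otimes\hat P_k) = VzV^*$. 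Second, from $\hat\Delta(Q) = Q\otimes Q$ applied along $V$ I obtain the modular intertwining relation $(Q_n\otimes Q_k)V = VQ_l$, equivalently $V^*(Q_n\otimes Q_k)V = Q_l$; this is the identity that will make the $Q$-operators collapse.

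Next I would compute $\hat P_l\Conv(w)$ explicitly. Testing against an arbitrary $z\in\mc B(H_l)$ and using the definition of the convolution together with $\hat h_L(x) = \sum_\alpha m_\alpha\Tr(Q_\alpha\hat P_\alpha x)$, I would write
\[
\hat h_L\big(z\,\Conv(w)\big) = (\hat h_L\otimes\hat h_L)\big(\hat\Delta(z)w\big) = (\hat h_L\otimes\hat h_L)\big(VzV^*w\big) = m_nm_k\Tr\big(z\,V^*w(Q_n\otimes Q_k)V\big),
\]
where the middle step uses $w = (\hat P_n\otimes\hat P_k)w$ and the last uses the trace formula and cyclicity. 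Comparing with $\hat h_L\big(z\,\hat P_l\Conv(w)\big) = m_l\Tr\big(z\,\hat P_l\Conv(w)\,Q_l\big)$ and invoking nondegeneracy of the trace pairing on $\mc B(H_l)$, I would deduce $\hat P_l\Conv(w) = \frac{m_nm_k}{m_l}V^*w(Q_n\otimes Q_k)VQ_l^{-1}$, which by the modular relation simplifies to the clean expression $\hat P_l\Conv(w) = \frac{m_nm_k}{m_l}V^*wV$.

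Finally I would evaluate both norms. On the left,
\[
\|\hat P_l\Conv(w)\|_{\ell^2(\hG)}^2 = m_l\Tr\big(Q_l(\hat P_l\Conv w)^*(\hat P_l\Conv w)\big) = \frac{(m_nm_k)^2}{m_l}\Tr\big(Q_l V^*w^*(VV^*)wV\big).
\]
On the right, using $\hat\Delta(\hat P_l)w\hat\Delta(\hat P_l) = VV^*\,w\,VV^*$ (from centrality of $\hat P_n\otimes\hat P_k$) and the trace formula on $\mc B(H_n\otimes H_k)$,
\[
\|\hat\Delta(\hat P_l)w\hat\Delta(\hat P_l)\|_{\ell^2(\hG)\otimes\ell^2(\hG)}^2 = m_nm_k\Tr\big((Q_n\otimes Q_k)VV^*w^*VV^*wVV^*\big) = m_nm_k\Tr\big(Q_l V^*w^*(VV^*)wV\big),
\]
where the last equality again invokes $V^*(Q_n\otimes Q_k)V = Q_l$ and $V^*V = \id_{H_l}$. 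Multiplying this by $m_nm_km_l^{-1}$ reproduces the left-hand computation exactly, and taking square roots gives the stated equality.

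I expect the main obstacle to lie in the two preliminary identities rather than in the norm bookkeeping. One must justify that $VV^*$ is precisely the component $\hat\Delta(\hat P_l)(\hat P_n\otimes\hat P_k)$ — this is exactly where multiplicity-freeness of the fusion rules is essential, since it guarantees the intertwiner is unique up to a phase and therefore $VV^*$ is the \emph{full} isotypic projection. One must also extract the modular relation $(Q_n\otimes Q_k)V = VQ_l$ correctly from $\hat\Delta(Q) = Q\otimes Q$. It is precisely this modular relation that cancels all occurrences of $Q_n$, $Q_k$ and $Q_l$ down to the single scalar $m_nm_km_l^{-1}$; without it the identity would carry residual factors of $Q$ and would fail to hold on the nose.
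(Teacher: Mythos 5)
Your proof is correct, and every step checks against the paper's framework: the intertwining relation $\hat\Delta(x)S=Sx$ for $S\in\Mor(U^l,U^n\boxtimes U^k)$ is exactly the paper's defining property of $\hat\Delta$, multiplicity-freeness does give $\hat\Delta(\hat P_l)(\hat P_n\otimes\hat P_k)=VV^*$, and the relation $(Q_n\otimes Q_k)V=VQ_l$ follows from $\hat\Delta(Q)=Q\otimes Q$ as you claim. The paper's own proof runs on the same two ingredients but organizes them differently: it writes $\hat\Delta(\hat P_l)w\hat\Delta(\hat P_l)=\hat\Delta(z)(\hat P_n\otimes\hat P_k)$ for the unique $z\in\hat P_l\C[\hG]$, uses the $\hat\Delta$-module property of $\Conv$ together with centrality of $\hat P_n*\hat P_k$ to conclude $\hat P_l\Conv(w)=\lambda z$ for a scalar $\lambda$, derives $\|\hat P_l\Conv(w)\|^2_{\ell^2(\hG)}=\bar\lambda\,\|\hat\Delta(\hat P_l)w\hat\Delta(\hat P_l)\|^2_{\ell^2(\hG)^{\otimes 2}}$ by a chain of weight identities, and only at the very end evaluates $\lambda=m_nm_km_l^{-1}$ by pairing $\hat P_n*\hat P_k$ against $\hat P_l$ (which is where the modular relation enters). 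You instead extract the closed formula $\hat P_l\Conv(w)=\frac{m_nm_k}{m_l}V^*wV$ by trace duality and then compute both norms head on. The two routes are equivalent in substance, but yours has the merit of making the formula $\hat P_l\Conv(w)=\frac{m_nm_k}{m_l}V^*wV$ fully explicit; this is precisely the form the paper needs later (it is invoked ``by Lemma \ref{lem:RD_L2norm} and its proof'' in the proof of Theorem \ref{thm:AO}), whereas the paper's argument is marginally slicker in that the scalar $m_nm_km_l^{-1}$ appears only once, in the final evaluation of $\lambda$, rather than being threaded through the norm computations.
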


\begin{proof}
Since the inclusion $U^l \subset U^n \boxtimes U^k$ is  multiplicity-free by Theorem \ref{thm:fusion_rules_QAG}, there exists a unique $z \in \hat P_l\C[\hG]$ such that $\hat \Delta(\hat P_l) w \hat \Delta(\hat P_l)=\hat\Delta(z)(\hat P_n \otimes \hat P_k)$. (I.e., $z= C_{(n,k,l)}^{-1} (\rho_l^{n\boxtimes k})^*w\rho_l^{n\boxtimes k}$ using the notation of \eqref{eqn:rho}--\eqref{eqn:norm_intertwiner}). Then from the definition of convolution, we have $\hat P_l \Conv (w) = \Conv (\hat\Delta(\hat P_l)w \hat\Delta(\hat P_l)) = \Conv (\hat\Delta(z)(\hat P_n \otimes \hat P_k))  = z(\hat P_n *\hat P_k) = \lambda z$ for some $\lambda \in \C$.  The last equality follows because $\hat P_n *\hat P_k$ is evidently central. This yields
\begin{align*}
\|\hat P_l \Conv (w)\|_{\ell^2(\hG)}^2 &=\hat h\big( (\hat P_l \Conv (w))^*\hat P_l  \Conv (w)\big) =\bar \lambda \hat h \big(z^*  \Conv (w) \big) \\
&= \bar \lambda  \hat h \otimes \hat h \big(\hat \Delta(z)^* w \big)  = \bar \lambda  \hat h \otimes \hat h \big((\hat P_n \otimes \hat P_k)\hat\Delta(z)^*w\hat\Delta (\hat P_l) \big) \\
&=\bar \lambda  \hat h \otimes \hat h \big((\hat\Delta(\hat P_l)w \hat\Delta(\hat P_l))^*w \hat\Delta (\hat P_l) \big) =  \bar \lambda  \|\hat\Delta(\hat P_l)w \hat\Delta(\hat P_l) \|_{\ell^2(\hG)^{\otimes 2}}      
\end{align*}
Finally, the appropriate value of $\lambda$ is obtained from the equation \begin{align*}\lambda m_l^2 &= \lambda h_L(\hat P_l) = h_L(\hat P_l (\hat P_n *\hat P_k)) = m_n m_k (\text{Tr}_n \otimes \text{Tr}_k)\big((Q_n\hat P_n\otimes Q_k\hat P_k)\hat \Delta(\hat P_l)\big) \\
&= m_nm_k (\text{Tr}_n \otimes \text{Tr}_k)\big(\hat \Delta(Q_l\hat P_l)\big)  = m_nm_k \text{Tr}_{l}(Q_l) =  m_nm_km_l.
\end{align*}
\end{proof}

We now restrict to the unimodular case, so that $Q_k = \id_{H_k}$ and $m_k = d_k = \dim U^k$ for all $k$.  In this case, we have, $\|x\|_{\ell^2(\hG)} = d_{n}^{1/2}\|x\|_{\mc{HS}(H_n)}$ and  $\|\hat\Delta(\hat P_l)(x \otimes y) \hat\Delta(\hat P_l) \|_{\ell^2(\hG) \otimes \ell^2(\hG)} = (d_nd_k)^{1/2}\|\hat\Delta(\hat P_l)(x \otimes y) \hat\Delta(\hat P_l) \|_{\mc{HS}(H_n\otimes H_k)}$  for each $x \in \hat P_n \C[\hG]$, $y \in \hat P_k\C[\hG]$ (where $\|\cdot\|_{\mc{HS}(\cdot)}$ denotes the usual Hilbert-Schmidt norm).  Combining this fact with Lemma \ref{lem:RD_L2norm}, we can recast our required inequality \eqref{eqn:RD_reduction2} in terms of  Hilbert-Schmidt norms.  

\begin{prop} \label{prop:RD_HS}   There is a constant $D >0$ depending only on $\dim B$ such that for every $n,k,l \in \N$,
\[
\|\hat\Delta(\hat P_l)(x \otimes y) \hat\Delta(\hat P_l)\|_{\mc{HS}(H_n \otimes H_k)} \le D \big( d_l d_n^{-1}d_k^{-1}\big)^{1/2} \|x\|_{\mc{HS}(H_n)} \|y\|_{\mc{HS}(H_k)},  
\]  for all $x \in \hat P_n \C[\hG]$ and $y \in \hat P_k \C[\hG]$.
\end{prop}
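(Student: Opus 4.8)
The plan is to use the explicit fusion morphisms of Section~\ref{expmodels} to convert the asserted Hilbert--Schmidt bound into a statement about a single, concretely presented intertwiner, and then to estimate that intertwiner diagrammatically, following the method of Vergnioux \cite{Ve}. First I would record the shape of $\hat\Delta(\hat P_l)$. Since the inclusion $U^l \subset U^n\boxtimes U^k$ is multiplicity-free (Theorem~\ref{thm:fusion_rules_QAG}), the restriction of $\hat\Delta(\hat P_l)$ to $H_n\otimes H_k$ is exactly the orthogonal projection onto the copy of $H_l$, namely $\hat\rho\hat\rho^*$, where $\hat\rho = C_{(n,k,l)}^{-1/2}\rho_l^{n\boxtimes k}\colon H_l \to H_n\otimes H_k$ is the isometric morphism of \eqref{eqn:rho}--\eqref{eqn:norm_intertwiner}. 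As $\hat\rho$ is an isometry, $\|\hat\rho A\hat\rho^*\|_{\mc{HS}(H_n\otimes H_k)} = \|A\|_{\mc{HS}(H_l)}$ for $A\in\mc B(H_l)$, so the left-hand side of the Proposition equals $\|\hat\rho^*(x\otimes y)\hat\rho\|_{\mc{HS}(H_l)}$. Using $x=p_{2n}xp_{2n}$ and $y=p_{2k}yp_{2k}$, this compressed product can be written as
\[ \hat\rho^*(x\otimes y)\hat\rho = C_{(n,k,l)}^{-1}\,p_{2l}(1_{2n-r}\otimes t_r^*\otimes 1_{2k-r})(x\otimes y)(1_{2n-r}\otimes t_r\otimes 1_{2k-r})p_{2l}, \]
with $l=n+k-r$, which exhibits the operator as $x\otimes y$ contracted against the cup $t_r$ over $r$ Temperley--Lieb strands.

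Two exact ingredients feed the estimate. Because $\hat\rho\hat\rho^*\in\Mor(U^n\boxtimes U^k,U^n\boxtimes U^k)$ and $\G$ is of Kac type, irreducibility forces the partial traces to be scalar: $(\id_{H_n}\otimes\Tr_{H_k})(\hat\rho\hat\rho^*) = (d_l/d_n)\id_{H_n}$ and $(\Tr_{H_n}\otimes\id_{H_k})(\hat\rho\hat\rho^*) = (d_l/d_k)\id_{H_k}$. These yield $\|(x\otimes\id_{H_k})\hat\rho\|_{\mc{HS}} = (d_l/d_n)^{1/2}\|x\|_{\mc{HS}}$ and $\|(\id_{H_n}\otimes y)\hat\rho\|_{\mc{HS}} = (d_l/d_k)^{1/2}\|y\|_{\mc{HS}}$, already supplying the correct weights $d_n^{-1/2}$, $d_k^{-1/2}$.

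However, writing $\hat\rho^*(x\otimes y)\hat\rho = G^*H$ with $G = (x^*\otimes\id_{H_k})\hat\rho$ and $H = (\id_{H_n}\otimes y)\hat\rho$, the submultiplicative bound $\|G^*H\|_{\mc{HS}}\le\|G\|_{\mc{HS}}\|H\|_{\mc{HS}}$ yields only $d_l(d_nd_k)^{-1/2}\|x\|_{\mc{HS}}\|y\|_{\mc{HS}}$, which overshoots the target by a factor $d_l^{1/2}$; recovering this factor is the crux, and it cannot be achieved by operator-norm manipulations but forces genuine use of the planar structure of the contraction. Following \cite{Ve}, I would peel the cup $t_r$ one strand at a time via the recursion \eqref{eqn:t2k}, estimating at each stage the Hilbert--Schmidt norm of the partially contracted diagram and extracting the factor $\|x\|_{\mc{HS}}\|y\|_{\mc{HS}}$, so that what remains is a scalar built from the normalizations $[r+1]_q^{-1/2}$, the quantum dimensions, and $C_{(n,k,l)}^{-1/2}$.

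Assembling these scalars by means of the explicit value of $C_{(n,k,l)}$ in \eqref{eqn:norm_intertwiner} reduces the Proposition to a uniform inequality among $q$-factorials producing $D(d_l d_n^{-1}d_k^{-1})^{1/2}$ with $D$ depending only on $\delta$, hence only on $\dim B$. The main obstacle, and the point where the argument departs from the free orthogonal case of \cite{Ve}, is the occurrence of odd $r$: when $r=2s+1$ the cup links a half-integral number of cables, so $t_r$ is not an isometric morphism into any $H_s\otimes H_s$ and the clean strand-peeling fails. Here I would invoke the factorization \eqref{oddt}, which expresses $1_{2n-r}\otimes t_r\otimes 1_{2k-r}$ through $t_{2s}$ together with one extra copy of $m^*$; the multiplication morphism contributes the controlled factor $\|m\|=\delta$ and the explicit $q$-coefficient $([2s+1]_q/[2s+2]_q[2]_q)^{1/2}$, reducing the odd case to the even one. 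The final and most delicate step is then the purely $q$-numerical check that all of these ratios, across both parities, remain bounded uniformly in $n,k,l$, which rests on the inequality $0<q\le 1$ guaranteed by $\delta\ge 2$.
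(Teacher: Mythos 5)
Your proposal follows essentially the same route as the paper's proof: reduce to the compressed operator $(\rho_l^{n\boxtimes k})^*(x\otimes y)\rho_l^{n\boxtimes k}$ via the bounds $D_0 \le C_{(n,k,l)} \le 1$, exploit the explicit planar form of $t_r$ (using \eqref{oddt} and the factor $\|m\|=[2]_q$ to reduce odd $r$ to even), extract $\|x\|_{\mc{HS}}\|y\|_{\mc{HS}}$ by Cauchy--Schwarz over the indices paired by the cup, and finish with the uniform $q$-number comparison $[r+1]_q^{-2} \asymp d_l d_n^{-1}d_k^{-1}$. You correctly identify the crux (the naive Hilbert--Schmidt submultiplicativity loses a factor $d_l^{1/2}$, recovered only through the index-pairing structure of $t_{2s} = [2s+1]_q^{-1/2}(p_{2s}\otimes p_{2s})\sum_i e_i \otimes f_{\check{i}}$), which is exactly the computation the paper carries out in its two-case analysis.
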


Of course if $l \in \N$ is such that $U^l$ is \textit{not} a subrepresentation of $U^n\boxtimes U^k$, then the inequality in Proposition \ref{prop:RD_HS} is trivially valid since the left-hand side is then zero.  So to proceed towards proving Proposition \ref{prop:RD_HS}, let us fix an inclusion $U^l \subset U^n \boxtimes U^k$.  Recall that we defined in equation  \eqref{eqn:rho} a morphism $\rho_{l}^{n \boxtimes k} \in \Mor(U^l, U^n \boxtimes U^k) \subset \text{TL}_{2l,2n+2k}(\delta)$ which has the property that  $C_{(n,k,l)}^{-1/2}\rho_{l}^{n \boxtimes k}$ is an isometry from $H_l$ onto $\hat \Delta (\hat P_{l})(H_n \otimes H_k)$, where $C_{(n,k,l)}$ is the constant defined in \eqref{eqn:norm_intertwiner}.  We will use the morphism $\rho_{l}^{n \boxtimes k}$ to obtain the required norm estimate in Proposition \ref{prop:RD_HS}.  To do this, we first need the following lemma which gives a uniform estimate on the size of $C_{(n,k,l)}$.

\begin{lem} \label{lem:bounds_constants}
With the notation above, there exists a constant $D_0 > 0$ (independent of $n,k,l$ and only depending on $\dim B$)  such that $D_0 \le C_{(n,k,l)} \le 1$.
\end{lem}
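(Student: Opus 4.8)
The plan is to treat the two inequalities separately, obtaining the upper bound essentially for free and concentrating the real work on the lower bound.

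For the upper bound $C_{(n,k,l)}\le 1$ I would simply exploit the geometry already recorded: the operator $C_{(n,k,l)}^{-1/2}\rho_l^{n\boxtimes k}$ is an isometry of $H_l$ onto $\hat\Delta(\hat P_l)(H_n\otimes H_k)$, hence has norm $1$, so $\|\rho_l^{n\boxtimes k}\|=C_{(n,k,l)}^{1/2}$. Since $\rho_l^{n\boxtimes k}=(p_{2n}\otimes p_{2k})(1_{2n-r}\otimes t_r\otimes 1_{2k-r})p_{2l}$ was already shown to be a contraction, this forces $C_{(n,k,l)}^{1/2}\le 1$. Equivalently, from $(\rho_l^{n\boxtimes k})^*\rho_l^{n\boxtimes k}=C_{(n,k,l)}p_{2l}$ together with the fact that $p_{2l}$ is a nonzero projection when $\delta\ge 2$, one reads off $C_{(n,k,l)}=\|(\rho_l^{n\boxtimes k})^*\rho_l^{n\boxtimes k}\|\le 1$.

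The substance is the lower bound. Here I would first substitute $2l=2n+2k-2r$ (coming from $l=n+k-r$) into \eqref{eqn:norm_intertwiner} and regroup, so that $C_{(n,k,l)}$ becomes a single ratio of $q$-factorials and $q$-numbers. Because the lemma is used for the $\delta$-trace with $\dim B\ge 5$, we have $\delta=\sqrt{\dim B}>2$ and hence $0<q<1$ strictly, which is exactly what makes the estimate work. The key tool is the closed form $[a]_q=(1-q^2)^{-1}q^{1-a}(1-q^{2a})$ and its factorial version $[a]_q!=(1-q^2)^{-a}q^{-a(a-1)/2}(q^2;q^2)_a$, where $(q^2;q^2)_a=\prod_{j=1}^a(1-q^{2j})$. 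Substituting these splits $C_{(n,k,l)}$ into a power of $q$, a power of $(1-q^2)$, and a ratio of correction factors of the form $(q^2;q^2)_\bullet$ and $(1-q^{2\bullet})$.

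The main obstacle is the bookkeeping that collapses the first two pieces to a constant independent of $n,k,l$. A direct (if tedious) computation of exponents, using $2l=2n+2k-2r$ throughout, shows that the total exponent of $q$ vanishes while the total exponent of $(1-q^2)$ equals $1$, so that $C_{(n,k,l)}=(1-q^2)\,R_{(n,k,l)}$ with
\[
R_{(n,k,l)}=\frac{(q^2;q^2)_{2l+r+1}\,(q^2;q^2)_{2n-r}\,(q^2;q^2)_r\,(q^2;q^2)_{2k-r}}{(1-q^{2r+2})\,(1-q^{4l+2})\,(q^2;q^2)_{2n}\,(q^2;q^2)_{2k}\,(q^2;q^2)_{2l}}.
\]
Since $r\le 2\min\{n,k\}$ and $l=n+k-r\ge 0$, every index occurring is a nonnegative integer, so no factor is problematic. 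The final step is a uniform lower bound on $R_{(n,k,l)}$: for $0<q<1$ the sequence $(q^2;q^2)_a$ is decreasing in $a$ and converges to the strictly positive value $(q^2;q^2)_\infty=\prod_{j\ge 1}(1-q^{2j})$, whence $(q^2;q^2)_\infty\le (q^2;q^2)_a\le 1$ for all $a$, and likewise $0<1-q^{2m}<1$. Thus each of the four numerator factors is at least $(q^2;q^2)_\infty$ while the denominator is at most $1$, giving $R_{(n,k,l)}\ge (q^2;q^2)_\infty^4$ and therefore $C_{(n,k,l)}\ge (1-q^2)(q^2;q^2)_\infty^4=:D_0>0$, a constant depending only on $q$, i.e.\ only on $\dim B$. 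I would emphasize that the strict positivity of the infinite product (equivalently $q<1$, equivalently $\dim B\ge 5$) is essential: at $q=1$ one checks directly that $C_{(n,n,n)}\to 0$, so no uniform lower bound can survive there.
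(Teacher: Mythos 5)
Your proposal is correct, but it handles the lower bound by a genuinely different route than the paper. The upper bound argument is identical: the paper also deduces $C_{(n,k,l)}\le 1$ immediately from the fact that $\rho_l^{n\boxtimes k}$ is a contraction while $C_{(n,k,l)}^{-1/2}\rho_l^{n\boxtimes k}$ is an isometry. For the lower bound, however, the paper does no computation at all: it simply cites Lemma A.6 of Vaes--Vergnioux \cite{VaVe}, adding only the remark that their estimates, obtained in a different (but faithful) concrete representation of the Temperley--Lieb category, transfer to the present setting. You instead derive a closed form: substituting $[a]_q=(1-q^2)^{-1}q^{1-a}(1-q^{2a})$ and $[a]_q!=(1-q^2)^{-a}q^{-a(a-1)/2}(q^2;q^2)_a$ into \eqref{eqn:norm_intertwiner}, the powers of $q$ cancel exactly (I checked the exponent bookkeeping: with $A=2n$, $B=2k$, $s=r$, both sides reduce to $(A+B-s)^2+(A-s)^2+s^2+(B-s)^2=A^2+B^2+(A+B-2s)^2$, which is an identity) and the power of $(1-q^2)$ is exactly $1$, yielding $C_{(n,k,l)}=(1-q^2)R_{(n,k,l)}$ with your stated $R_{(n,k,l)}$, whence $D_0=(1-q^2)(q^2;q^2)_\infty^4$ works. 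What each approach buys: the paper's is short but opaque and outsources the analytic content; yours is self-contained, produces an explicit constant, and makes transparent exactly where strict positivity fails --- namely at $q=1$, i.e.\ $\dim B=4$, where indeed $C_{(n,n,n)}\to 0$ (by Stirling, like $(27/64)^n\sqrt{n}$), so the uniform lower bound genuinely requires $\dim B\ge 5$. This last observation is a caveat the paper glosses over, since its standing assumption is only $\dim B\ge 4$; your proof localizes the needed hypothesis correctly, and the same restriction is in any case implicitly required at the end of the paper's proof of Proposition \ref{prop:RD_HS}, where a factor $(1-q^2)^{-1}$ appears.
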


\begin{proof}
Since $\rho_{l}^{n \boxtimes k}$ is a contraction, the upper bound $C_{(n,k,l)} \le 1$ is immediate.  The lower bound can be obtained by analyzing the expansion of $C_{(n,k,l)}$ in terms of the $q$-numbers.  This, however, has already been done by Vaes and Vergnioux in  \cite{VaVe}, Lemma A.6.  Note that in \cite{VaVe}, the authors work with a concrete representation of the Temperley-Lieb algebras which is generally different from the one considered here.  However, since all representations under consideration are faithful, their estimates transfer directly to our context.      
\end{proof} 

\begin{proof}[Proof of Proposition \ref{prop:RD_HS}]
With $U^l \subset U^n \boxtimes U^k$ fixed, let $0 \le r\le 2\min\{n,k\}$ be such that $l=k+n-r$.  From Lemma \ref{lem:bounds_constants} and the discussion preceding it, it follows that
\begin{align*}\|\hat\Delta(\hat P_l)(x \otimes y) \hat\Delta(\hat P_l)\|_{\mc{HS}(H_n \otimes H_k)} &= C_{(n,k,l)}^{-1}\big\|(\rho_l^{n \boxtimes k})^*(x \otimes y)\rho_l^{n \boxtimes k}\big\|_{\mc{HS}(H_l)} \\
& \le D_0^{-1} \big\|(\rho_l^{n \boxtimes k})^*(x \otimes y)\rho_l^{n \boxtimes k}\big\|_{\mc{HS}(H_l)}.
\end{align*}
We now consider two possible cases: either $r$ is odd or even.  

Suppose first that $r = 2s+1$ is odd.  Identify $H_l$ with the highest weight subspace $\hat\Delta(\hat P_l)(H_{n-s-1} \otimes H_{k-s})$, and $H_n$, $H_k$ with the highest weight subspaces of $H_{n-s-1} \otimes H_{1}^{\otimes (s+1)}$ and $H_{1}^{\otimes s} \otimes H_{k-s}$, respectively.  Now recall Section \ref{expmodels} where we fixed an orthonormal basis $\{e_i\}_{i=1}^{d_1}$ for $H_1$, a unitary $F_1$ such that $(F_1 \otimes 1)\overline{U^1}(F_1^{-1} \otimes 1) = U^1$ and an isometric morphism $t_2 = d_1^{-1/2}\sum_i e_i\otimes F_1e_i \in \Mor(1,U^1 \boxtimes U^1)$.  Let $e_{ij}$ and $f_{ij}$ denote the standard matrix units for $\mc B(H_1)$ relative to the orthonormal bases $\{e_i\}_i$ and $\{f_i = F_1e_i\}_i$, respectively.  Given functions $i,j:[s] \to [d_1]$, we will also write $e_i$ and $f_i$ for the tensors $e_{i(1)}\otimes \ldots e_{i(s)}$ and $f_{i(1)}\otimes \ldots \otimes f_{i(s)} \in H_1^{\otimes s}$, respectively, and write $e_{ij}$ and $f_{ij}$ for the matrix units $e_{i(1)j(1)}\otimes \ldots \otimes e_{i(s)j(s)}$ and $f_{i(1)j(1)}\otimes \ldots \otimes f_{i(s)j(s)} \in \mc B(H_1^{\otimes s})$, respectively.  With these identifications, we can uniquely express \[x = \sum_{\substack{1 \le i_0,j_0 \le d_1 \\ i,j:[s] \to [d_1]}}x_{i_0,i,j_0,j}\otimes e_{i_0j_0} \otimes e_{ij}, \qquad y = \sum_{u,v:[s] \to [d_1]} f_{uv} \otimes y_{u,v}, \]
where $\{x_{i_0,i,j_0,j}\}_{i_0,i,j_0,j} \subset\mc B(H_{n-s-1})$ and $\{y_{u,v}\}_{u,v} \subset\mc B(H_{k-s})$.  Note that $\|x\|_{\mc {HS}(H_n)}^2 = \sum_{i,j,i_0,j_0} \|x_{i_0,i,j_0,j}\|_{\mc{HS}(H_{n-s-1})}^2$ and $\|y\|_{\mc {HS}(H_k)}^2 = \sum_{u,v} \|y_{u,v}\|_{\mc{HS}(H_{k-s})}^2$.  By repeatedly using the expression \eqref{eqn:t2} for $t_2$ in the recursion formula \eqref{eqn:t2k},  it follows that $[2s+1]_q^{1/2}t_{2s} = (p_{2s}\otimes p_{2s})\sum_{i:[s]\to [d_1]}e_i \otimes f_{\check{i}},$ where for any multi-index $i = (i(1), \ldots,i(s-1), i(s))$, $\check{i}:= (i(s), i(s-1), \ldots, i(1))$. Plugging this   expression for $t_{2s}$ into \eqref{oddt}, we obtain   
\begin{align*}1_{1} \otimes t_{2s+1} \otimes 1_{1} =[2s+2]_q^{-1/2}[2]_q^{-1/2}(1_1 \otimes p_{2s+1} \otimes p_{2s+1} \otimes 1_1) 
 \big(1_{2} \otimes \sum_{i:[s] \to [d_1]} e_i \otimes f_{\check{i}} \otimes 1_{2}\big)m^*,  
\end{align*} 
which then yields
\begin{align*}
 &(\rho_l^{n \boxtimes k})^*(x \otimes y)\rho_l^{n \boxtimes k} \\
&= p_{2l}(1_{2n-2s-1} \otimes t_{2s+1}^* \otimes 1_{2k-2s-1}) \\ &\qquad  \times \sum_{\substack{i,j,u,v:[s]\to[d_1] \\
1 \le i_0,j_0 \le d_1}}(x_{i_0,i,j_0,j} \otimes e_{i_0j_0} \otimes e_{ij} \otimes f_{uv} \otimes  y_{u,v})  (1_{2n-2s-1} \otimes t_{2s+1} \otimes 1_{2k-2s-1})p_{2l} \\
&=[2]_q^{-1}[2s+2]_q^{-1}p_{2l}\sum_{\substack{i,j:[s]\to[d_1] \\
1 \le i_0,j_0 \le d_1}}\big(x_{i_0,i,j_0,j} \otimes (m \otimes 1_{2k-2s-2})(e_{i_0j_0} \otimes y_{\check{i},\check{j}}) (m^* \otimes 1_{2k-2s-2})\big)p_{2l} \\
&= [2]_q[2s+2]_q^{-1}p_{2l}\sum_{\substack{i,j:[s]\to[d_1] \\
1 \le i_0,j_0 \le d_1}}\big(x_{i_0,i,j_0,j} \otimes (\rho^{1 \boxtimes (k-s)}_{k-s})^*(e_{i_0j_0} \otimes y_{\check{i},\check{j}})\rho^{1 \boxtimes (k-s)}_{k-s}\big)p_{2l}.
\end{align*}
Note that in the last equality above, we have also used that $(p_2\otimes p_{2k-2s})(m^*\otimes 1_{2k-2s-2})p_{2k-2s}= [2]_q\rho_{k-s}^{1 \boxtimes(k-s)} $, which follows from  \eqref{eqn:rho}.  If we now take the Hilbert-Schmidt norm of the above expression and use the triangle and Cauchy-Schwarz inequalities, we obtain
\begin{align*}
& \|(\rho_l^{n \boxtimes k})^*(x \otimes y)\rho_l^{n \boxtimes k}\|_{\mc {HS}(H_l)} \\
& \le [2]_q[2s+2]_q^{-1}\Big\|\sum_{\substack{i,j:[s]\to[d_1] \\
1 \le i_0,j_0 \le d_1}}x_{i_0,i,j_0,j} \otimes (\rho^{1 \boxtimes (k-s)}_{k-s})^*(e_{i_0j_0} \otimes y_{\check{i},\check{j}})\rho^{1 \boxtimes (k-s)}_{k-s}\big)\Big\|_{\mc {HS}(H_{n-s-1} \otimes H_{k-s})} \\
&\le [2]_q[2s+2]_q^{-1} \Big(\sum_{i,j,i_0,j_0} \|x_{i_0,i,j_0,j}\|_{\mc {HS}(H_{n-s-1})}^2 \Big)^{1/2} \\ 
& \qquad \times \Big(\sum_{i,j,i_0,j_0} \|(\rho^{1 \boxtimes (k-s)}_{k-s})^*(e_{i_0j_0} \otimes y_{\check{i},\check{j}})\rho^{1 \boxtimes (k-s)}_{k-s}\|_{\mc {HS}(H_{k-s})}^2 \Big)^{1/2}  \\
&= [2]_q[2s+2]_q^{-1} \|x\|_{\mc {HS}(H_n)} \\
& \qquad \times C_{(1,k-s,k-s)} \Big(\sum_{i,j,i_0,j_0} \|\hat\Delta(\hat P_{k-s})(e_{i_0j_0} \otimes y_{\check{i},\check{j}})\hat\Delta(\hat P_{k-s})\|_{\mc {HS}(H_1 \otimes H_{k-s})}^2 \Big)^{1/2} \\
&\le  [2]_q[2s+2]_q^{-1} \|x\|_{\mc {HS}(H_n)}  \Big(\sum_{i,j,i_0,j_0} \|e_{i_0j_0} \otimes y_{\check{i},\check{j}}\|_{\mc {HS}(H_1 \otimes H_{k-s})}^2 \Big)^{1/2} \\
&= [2]_q[2s+2]_q^{-1} \|x\|_{\mc {HS}(H_n)}  \Big(\sum_{i,j,i_0,j_0} \| y_{\check{i},\check{j}}\|_{\mc {HS}(H_{k-s})}^2 \Big)^{1/2} \\
&= [2]_q[3]_q[2s+2]_q^{-1} \|x\|_{\mc {HS}(H_n)} \|y\|_{\mc {HS}(H_k)}.
\end{align*}

Now consider the second case where $r=2s$ is even.  We will only sketch this situation, as it is similar to (and in fact simpler than) the previous case.  We now identify $H_l$ with the highest weight subspace $\hat\Delta(\hat P_l)(H_{n-s} \otimes H_{k-s})$, and $H_n$, $H_k$ with the highest weight subspaces of $H_{n-s} \otimes H_{1}^{\otimes s}$ and $H_{1}^{\otimes s} \otimes H_{k-s}$, respectively.  Using the notation from the previous case, write       
\[x = \sum_{i,j:[s] \to [d_1]}x_{i,j}\otimes e_{ij}, \qquad y = \sum_{u,v:[s] \to [d_1]} f_{uv} \otimes y_{u,v}, \]
where $\{x_{i,j}\}_{i,j} \subset\mc B(H_{n-s})$ and $\{y_{u,v}\}_{u,v} \subset\mc B(H_{k-s})$.  Since $[2s+1]_q^{1/2} t_{2s} = (p_{2s} \otimes p_{2s})
 \sum_{i:[s] \to [d_1]} e_{i}\otimes f_{\check{i}}$, it is easy to verify that
\begin{align*}
&(\rho_l^{n \boxtimes k})^*(x \otimes y)\rho_l^{n \boxtimes k} \\
&= p_{2l}(1_{2n-2s} \otimes t_{2s}^* \otimes 1_{2k-2s})\Big(\sum_{i,j,u,v:[s] \to [d_1]} x_{i,j} \otimes e_{ij} \otimes f_{uv} \otimes y_{u,v}\Big)(1_{2n-2s} \otimes t_{2s} \otimes 1_{2k-2s})p_{2l} \\
&=[2s+1]^{-1} p_{2l}\Big(\sum_{i,j:[s] \to [d_1]} x_{i,j} \otimes y_{\check{i}, \check{j}}\Big)p_{2l}.
\end{align*}
Taking the Hilbert-Schmidt norm then yields \[\|(\rho_l^{n \boxtimes k})^*(x \otimes y)\rho_l^{n \boxtimes k}\|_{\mc {HS}(H_l)} \le [2s+1]^{-1}_q \|x\|_{\mc {HS}(H_{n})} \|y\|_{\mc {HS}(H_{k})}.\]  

From the preceding two estimates we finally obtain the inequality \[\|\hat\Delta(\hat P_l)(x \otimes y) \hat\Delta(\hat P_l)\|_{\mc{HS}(H_n \otimes H_k)} \le [r+1]_q^{-1}[2]_q[3]_qD_0^{-1}\|x\|_{\mc {HS}(H_{n})} \|y\|_{\mc {HS}(H_{k})},  \]
where $l=n+k-r$.  To complete the proof, we therefore need to show that there exist constants $0 < D' \le D''$ (depending only on $q = q(\dim B)$) such that \[D' \le [r+1]_q^2d_ld_n^{-1}d_k^{-1} = \frac{[r+1]_q^2[2n+2k-2r+1]_q}{[2n+1]_q[2k+1]_q} \le D''.\]   
But since
\begin{align*}
&\frac{[r+1]_q^2[2n+2k-2r+1]_q}{[2n+1]_q[2k+1]_q}  
= \frac{(1-q^{2r+2})^2(1-q^{2(2n+2k-2r)+2})}{(1-q^2)(1-q^{4n+2})(1-q^{4k+2})}, 
\end{align*}
the existence of the constants $D',D''$ is clear.
\end{proof}

\subsection{Further Approximation Properties} \label{section:FAP}

We will now use the property of rapid decay and the Haagerup property for $L^\infty(\G)$ to prove some additional approximation properties for $L^\infty(\G)$, $C_r(\G)$ and the convolution algebra $L^1(\G)$.  In this section 
we will assume that $\dim B \ge 5$ (i.e., $\G$ is not co-amenable), so that $L^\infty(\G)$ is non-injective, $C_r(\G)$ is non-nuclear, and $L^1(\G)$ does not have a bounded approximate identity (see \cite{BeMuTu}, Theorem 1.1 and \cite{BeTu}, Theorem 3.1). 

Recall that a Banach space $X$ has the \textit{metric approximation property} if there exists a net of finite rank contractions $\{\Phi_t:X \to X\}_{t \in \Lambda}$ such that $\lim_t \Phi_t   = \id_X$ pointwise in norm.  If $X$ is a dual Banach space, we say that $X$ has the \textit{weak$^\ast$ metric approximation property} if there is a net of weak$^\ast$ continuous finite rank contractions $\{\Phi_t:X \to X\}_{t \in \Lambda}$ such that $\lim_t \Phi_t   = \id_X$ pointwise in the weak$^\ast$ topology.  Finally, recall that the \textit{left multiplier norm} on $L^1(\G)$ is the norm  \[
\|\omega\|_{M_l(L^1(\G))} := \sup\{\|\omega * \omega'\|_{L^1(\G)}: \|\omega'\|_{L^1(\G)}=1\} \qquad (\omega \in L^1(\G)) 
,\] where $\ast := (\Delta_r)_*$ denotes the convolution product on $L^1(\G)$.  Obviously $\|\omega\|_{M_l(L^1(\G))} = \|L_\omega\|_{\mc B(L^1(\G))} \le \|\omega\|_{L^1(\G)}$, where $L_\omega$ is the left-multiplication operator induced by $\omega \in L^1(\G)$.

\begin{thm} \label{thm:further_approx}
The following approximation properties hold for the quantum groups $\G$.
\begin{enumerate}
\item \label{MAP} $C_r(\G)$ has the metric approximation property.
\item \label{MBAI} $L^1(\G)$ has a central approximate identity $\{\omega_n\}_{n \ge 1}$ such that $\sup_n\|\omega_n\|_{M_l(L^1(\G))} \le 1$.
\item \label{WMAP} $L^\infty(\G)$ has the weak$^\ast$ metric approximation property.
\end{enumerate}
\end{thm}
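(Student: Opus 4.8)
The plan is to manufacture, from the central completely positive multipliers $\{\Phi_t\}_{t\in[t_0,\dim B)}$ produced in the proof of Theorem~\ref{thm:HAP_Kac}, a family of \emph{finite rank} contractions converging to the identity, and then to read off all three statements (the $L^1$ approximate identity by passing to pre-adjoints). Recall that $\Phi_t$ is the central multiplier acting by $\Phi_t\lambda(u^k_{ij})=c_k(t)\lambda(u^k_{ij})$ with symbol $c_k(t)=\Pi_k(t)/\Pi_k(\dim B)=S_{2k}(\sqrt t)/S_{2k}(\sqrt{\dim B})$, and that the estimate from the proof of Theorem~\ref{thm:HAP_Kac} gives the geometric decay $0\le c_k(t)\le A(t_0)(t/\dim B)^k$. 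Since each isotypic subspace $L^2_k(\G)\cong H_k\otimes H_{\overline k}$ is finite dimensional, the truncations $\Phi_t^{(N)}:=\sum_{k=0}^N c_k(t)E_k$, where $E_k\in\mc{CB}(L^\infty(\G))$ denotes the normal idempotent onto the $k$-th spectral component (with $L^2$-realization the projection $P_k$), have finite dimensional range and are therefore finite rank.

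The crucial estimate, and the place where the property of rapid decay enters, controls the truncation error in operator norm. For $a\in\lambda(\mc A)$ with spectral decomposition $a=\sum_k a_k$ (so $a_k=E_k(a)$), Theorem~\ref{thm:RD} gives $\|a_k\|_{L^\infty(\G)}\le D(2k+1)\|P_k\Lambda_h(a)\|_{L^2(\G)}$, whence by Cauchy--Schwarz
\begin{align*}
\|(\Phi_t-\Phi_t^{(N)})(a)\|_{L^\infty(\G)}
&\le \sum_{k>N}c_k(t)\|a_k\|_{L^\infty(\G)}\\
&\le D\Big(\sum_{k>N}c_k(t)^2(2k+1)^2\Big)^{1/2}\|\Lambda_h(a)\|_{L^2(\G)},
\end{align*}
and $\|\Lambda_h(a)\|_{L^2(\G)}\le\|a\|_{L^\infty(\G)}$ because $h$ is a state. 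The geometric bound on $c_k(t)$ makes the series $\sum_k c_k(t)^2(2k+1)^2$ convergent, so that $\|\Phi_t-\Phi_t^{(N)}\|_{\mc B(L^\infty(\G))}\to0$ as $N\to\infty$, for each fixed $t$. On the other hand, since each $\Phi_t$ is unital completely positive (so $\|\Phi_t\|_{cb}=1$) and $\Phi_t\to\id$ pointwise in norm on the dense subalgebra $\lambda(\mc A)$ as $t\to\dim B$, the uniform bound lets us upgrade this to pointwise norm convergence on all of $C_r(\G)$. A diagonal argument over $t\to\dim B$ and $N\to\infty$, followed by normalization by $\max(1,\|\Phi_t^{(N)}\|)\to1$, then produces finite rank \emph{contractions} $\Psi_s$ with $\Psi_s\to\id$ pointwise in $\|\cdot\|_{L^\infty(\G)}$-norm on $C_r(\G)$. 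This proves~\eqref{MAP}.

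For~\eqref{WMAP} and~\eqref{MBAI} I would pass to pre-adjoints. Each $\Psi_s$ is a normal central finite rank multiplier with a finitely supported symbol $(c^{(s)}_k)_k$, and a short computation shows that its pre-adjoint is convolution $(\Psi_s)_*(\omega)=\omega_s*\omega$ by the central element $\omega_s\in L^1(\G)$ carrying the same symbol (a finite sum of the normal functionals $d_k\,h(\chi_k\,\cdot\,)$, hence genuinely in $L^1(\G)$). Testing against the norm-dense subspace of finitely supported matrix-coefficient functionals and using the uniform bound $\|(\Psi_s)_*\|\le1$ shows that $(\Psi_s)_*\to\id$ pointwise \emph{in norm} on $L^1(\G)$. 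Dualizing this gives $\Psi_s\to\id$ in the point-weak$^\ast$ topology on $L^\infty(\G)$, and since the $\Psi_s$ are normal finite rank contractions this establishes~\eqref{WMAP}. For~\eqref{MBAI}, the same norm convergence reads $\omega_s*\omega\to\omega$ in $L^1(\G)$ for every $\omega$, so $\{\omega_s\}$ is a two-sided (by centrality) approximate identity, with $\|\omega_s\|_{M_l(L^1(\G))}=\|L_{\omega_s}\|_{\mc B(L^1(\G))}=\|\Psi_s\|_{\mc B(L^\infty(\G))}\le1$; separability of $L^1(\G)$ (as $\Irr(\G)\cong\N$) lets us extract a sequence $\{\omega_n\}_{n\ge1}$.

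The main obstacle is the truncation estimate of the second paragraph: everything hinges on balancing the $O(2k+1)$ growth furnished by Theorem~\ref{thm:RD} against the geometric decay of the multiplier symbols $c_k(t)$, which is exactly what forces the truncation error to zero in operator norm. Once this quantitative input is secured, the passage to finite rank contractions and their pre-adjoints, together with the identification of the $L^1$ approximate identity, is routine functional-analytic bookkeeping.
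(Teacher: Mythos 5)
Your proposal is correct and takes essentially the same route as the paper's own proof: truncate the central completely positive multipliers $\{\Phi_t\}$ from the Haagerup property argument, use the rapid decay bound $\|P_k\|_{L^2\to L^\infty}\le D(2k+1)$ against the geometric decay of the symbols to show the truncations converge in operator norm, normalize to finite rank contractions, and pass to pre-adjoints (convolution by central elements built from the characters $\chi_k$) to get the multiplier-bounded central approximate identity in $L^1(\G)$ and, by duality, the weak$^\ast$ metric approximation property for $L^\infty(\G)$. The only cosmetic differences are your use of Cauchy--Schwarz in the truncation estimate where the paper applies the triangle inequality directly, and your working with the $\mc B(L^\infty(\G))$-norm throughout where the paper first works on $C_r(\G)$ and then notes the two operator norms of the truncated multipliers agree.
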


Our proof of this theorem uses standard truncation techniques for completely positive maps using property RD.  Compare with \cite{Ha} and \cite{Br}. 

\begin{proof}[Proof of \eqref{MAP}]
We will use the notation and results from Theorem \ref{thm:HAP_Kac} and its proof.  Let $\{\Phi_t\}_{t \in [t_0, \dim B)} \subset \mc{CB}(L^\infty(\G))$ be the net of normal unital completely positive maps constructed in the proof of Theorem \ref{thm:HAP_Kac}, whose $L^2$-extensions are given by \[\hat{\Phi}_t = \sum_{k \in \N} \frac{\Pi_k(t)}{\Pi_k(\dim B)} P_k.\]  Recall that by Theorem \ref{thm:average_Kac}, $\Phi_t$ restricts to a unital completely positive map on $C_r(\G)$. Moreover, since $\lim_{t \to \dim B} \frac{\Pi_k(t)}{\Pi_k(\dim B)} = 1$ and $\|\Phi_t\|_{\mc B(C_r(\G))} = 1$ for all $t$, it follows that $\lim_{t \to \dim B} \Phi_t = \id_{C_r(\G)}$ pointwise in norm. 

Now, for each $n \in \N$ consider the finite rank truncation \[\hat{\Phi}_{t,n} = \sum_{k \le n } \frac{\Pi_k(t)}{\Pi_k(\dim B)} P_k \in \mc B(L^2(\G)), \] and let $\Phi_{t,n}:L^\infty(\G) \to C_r(\G)$ be the associated finite rank map determined by $\hat{\Phi}_{t,n}\circ \Lambda_h = \Lambda_h \circ \Phi_{t,n}$.  Using property RD (Theorem \ref{thm:RD}), we have $\|P_k\|_{\mc B(C_r(\G))} \le  \|P_k\|_{L^2 \to L^\infty} \le D(2k+1)$ for each $k \in \N$. Therefore 
\begin{align*}&\|\Phi_{t} - \Phi_{t,n}\|_{\mc B(C_r(\G))} = \Bigg\| \sum_{k \ge n+1} \frac{\Pi_k(t)}{\Pi_k(\dim B)} P_k \Bigg\|_{\mc B(C_r(\G))} \\
&\le \sum_{k \ge n+1}  \frac{\Pi_k(t)}{\Pi_k(\dim B)} \|P_k\|_{\mc B(C_r(\G))} \le D \sum_{k \ge n+1}  \frac{\Pi_k(t)}{\Pi_k(\dim B)}(2k+1) \\ 
&\le A(t_0)D \sum_{k \ge n+1}(2k+1)\Big(\frac{t}{\dim B}\Big)^k \longrightarrow  0 \qquad (n \to \infty, \ t \in [t_0,\dim B)).
\end{align*}
Consequently $\Phi_t = \lim_{n \to \infty} \Phi_{t,n}$ in operator norm  and $\lim_{n\to \infty}\|\Phi_{t,n}\| = \|\Phi_t\|=1$.  Therefore if we set $\tilde{\Phi}_{t,n} = \|\Phi_{t,n}\|^{-1}\Phi_{t,n}$, we obtain a family of finite rank contractions $\{\tilde{\Phi}_{t,n}\}_{t,n}$ which also satisfies $\lim_{n \to \infty} \tilde{\Phi}_{t,n} = \Phi_t$ in norm.  Since we already know that $\lim_{t \to \dim B} \Phi_t = \id_{C_r(\G)}$ in the point-norm topology, we conclude that the set $\{\tilde{\Phi}_{t,n}\}_{t,n}$ contains $\id_{C_r(\G)}$ in its point-norm closure.  We can now easily extract a sequence of finite rank contractions from $\{\tilde{\Phi}_{t,n}\}_{t,n}$ which yields the metric approximation property.  Indeed, just choose any sequence of numbers $\{t(n)\}_{n \in \N} \subset [t_0,\dim B)$ such that \[\lim_{n \to \infty} t(n) = \dim B \quad \text{and} \quad \lim_{n \to \infty} \sum_{k \ge n+1} (2k+1)\Big(\frac{t(n)}{\dim B}\Big)^k = 0.\]  Then the sequence $\{\Psi_n\}_{n \in \N} \subset \mc B(C_r(\G))$ where $\Psi_n = \tilde{\Phi}_{t(n),n}$ does the job.

{\it Proof of \eqref{MBAI}.}   Identify $L^1(\G)$ with the closure of $C_{\text{alg}}(\G)$ with respect to the norm \[\|\omega\|_{L^1(\G)}:= h(|\omega|) = \sup\{|h(\omega x)|  : x \in C_{\text{alg}}(\G), \|\lambda(x)\|_{L^\infty(\G)}=1\}.\]  Consider the sequence $\{\omega_n\}_{n \in \N} \subset Z(L^1(\G))$, where $\omega_n = \|\Phi_{t(n),n}\|^{-1}\sum_{k \le n} \Pi_k(t(n))\chi_k$ and $\{t(n)\}_{n \in \N}$ is the sequence chosen in the proof of part \eqref{MAP}.  Since $\lim_n t(n) = \dim B$, $\lim_n\|\Phi_{t(n),n}\| = 1$ and $\chi_k* u_{ij}^l  = u_{ij}^l * \chi_k = \frac{\delta_{k,l}}{\Pi_l(\dim B)} u_{ij}^l$ for each irreducible matrix element $u_{ij}^l \in C_{\text{alg}}(\G) \subset L^1(\G)$, it follows that \begin{align} \label{eqn:AI}L_{\omega_n} \omega = \omega_n*\omega = \omega*\omega_n \longrightarrow \omega \qquad (n \to \infty, \ \omega \in C_{\text{alg}}(\G) \subset L^1(\G)).
\end{align}
To prove that \eqref{eqn:AI} holds for each $\omega \in L^1(\G)$, the usual density argument shows that it is suffices to observe that $\|\omega_n\|_{M_l(L^1(\G))}$ is uniformly bounded.  This, however, is true because an easy duality calculation shows that $ (L_{\omega_n})^* = \Psi_n \in \mc B(L^\infty(\G))$ and therefore  $\|\omega_n\|_{M_l(L^1(\G))} = \|L_{\omega_n}\| = \|\Psi_n\|_{\mc B(L^\infty(\G))} = \|\Psi_n\|_{\mc B(C_r(\G))}  = 1$.  The result now follows.

\textit{Proof of \eqref{WMAP}.}  Since the maps $\Psi_n = (L_{\omega_n})^*$ are $\sigma$-weakly continuous finite rank contractions, and $\|L_{\omega_n}\omega - \omega\|_{L^1(\G)} \to 0$ for all $\omega \in L^1(\G) = L^\infty(\G)_*$, it follows by duality that $\lim_n\Psi_n = \id_{L^\infty(\G)}$ pointwise $\sigma$-weakly.    
\end{proof}

\subsection{A Remark on Exactness} \label{section:exact}

We close this section with a few remarks on the exactness of the reduced C$^\ast$-algebras $C_r(\G)$.  We suspect that Corollary \ref{cor:exactness} below may already be known by the experts, but we could not find a reference.  

In \cite{BiDeVa}, the notion of \textit{monoidal equivalence} for compact quantum groups was introduced, and in \cite{VaVe}, Vaes and Vergnioux showed that exactness of reduced C$^\ast$-algebras of compact quantum groups is preserved by monoidal equivalence.

\begin{thm}[\cite{VaVe}, Theorem 6.1]\label{thm:monequiv_exactness}
Let $\G$ and $\G_1$ be monoidally equivalent compact quantum groups.  Then $C_r(\G)$ is exact if and only if $C_r(\G_1)$ is exact.  
\end{thm}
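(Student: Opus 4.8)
The plan is to realise the monoidal equivalence concretely at the operator-algebra level by means of a reduced linking algebra, and then to conclude via the fact that exactness is invariant under strong Morita equivalence. The input from the theory of monoidal equivalence is the construction of Bichon, De Rijdt and Vaes \cite{BiDeVa} (see also \cite{DeVa}): a monoidal equivalence between $\G$ and $\G_1$ is always implemented by a unital C$^\ast$-algebra carrying a commuting pair of ergodic actions of full quantum multiplicity, one of $\G$ and one of $\G_1$. Taking the GNS representation with respect to the canonical invariant state and passing to the reduced level, one obtains a reduced linking C$^\ast$-algebra $L$, naturally organised as a $2 \times 2$ corner algebra whose diagonal corners are $C_r(\G)$ and $C_r(\G_1)$ and whose off-diagonal corner is the reduced Galois object $B_r$ together with its adjoint.

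First I would recall this construction and isolate inside $L$ the two diagonal projections $p$ and $q$ given by the units of $C_r(\G)$ and $C_r(\G_1)$, so that $pLp = C_r(\G)$, $qLq = C_r(\G_1)$ and $pLq = B_r$. The decisive point is to check that $p$ and $q$ are \emph{full}, i.e. that each generates $L$ as a closed two-sided ideal. This is exactly where ergodicity and the full-quantum-multiplicity hypothesis enter: the Podle\'s-type density (cancellation) conditions for the two coactions force $\overline{LpL} = L = \overline{LqL}$. Granting fullness, $B_r$ becomes a $C_r(\G)$--$C_r(\G_1)$ imprimitivity bimodule, so that $C_r(\G)$ and $C_r(\G_1)$ are strongly Morita equivalent.

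The proof is then finished by the standard permanence properties of exactness: exactness passes to C$^\ast$-subalgebras and is invariant under strong Morita equivalence (equivalently, $A$ is exact if and only if $A \otimes \mc K$ is). Since $C_r(\G)$ and $C_r(\G_1)$ are strongly Morita equivalent via $B_r$, one of them is exact precisely when the other is, which is the assertion.

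I expect the genuine work to lie entirely in the first two steps --- building the reduced linking algebra and verifying that its corner projections are full --- rather than in the concluding Morita-invariance argument, which is routine. The two subtle issues there are (i) ensuring that the invariant state on the universal linking algebra is such that its reduced version still exhibits the \emph{reduced} algebras $C_r(\G)$ and $C_r(\G_1)$ (and not the full versions $C_u(\G)$, $C_u(\G_1)$) as corners, and (ii) deducing fullness of $p$ and $q$ from the ergodicity and density conditions of the coactions. Once $L$ is in hand with full corners, the transfer of exactness is immediate.
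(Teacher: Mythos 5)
You are proving a statement that this paper does not prove at all: Theorem \ref{thm:monequiv_exactness} is imported verbatim from \cite{VaVe}, Theorem 6.1, so your proposal has to be measured against the argument given there, and it diverges from it at exactly the point where your argument breaks down. Your first step is correct: by \cite{BiDeVa}, a monoidal equivalence is implemented by a unital C$^\ast$-algebra $D$ carrying commuting ergodic coactions of full quantum multiplicity, with an invariant state and a reduced version $D_r$ (your $B_r$). The fatal error is the second step. The algebra $D$ is a \emph{bi-comodule} algebra: its structure maps are coactions $\delta:D \to D \otimes C_u(\G)$ and $\delta_1:D \to D \otimes C_u(\G_1)$. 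There is no module action of $C_r(\G)$ or $C_r(\G_1)$ on $D_r$ whatsoever, no $C_r(\G)$- or $C_r(\G_1)$-valued inner product, and hence no $2\times 2$ C$^\ast$-algebra having $C_r(\G)$ and $C_r(\G_1)$ as diagonal corners and $D_r$ as the off-diagonal corner. The term ``linking algebra'' in this literature refers to $D$ itself, or to a quantum-groupoid structure in which the ``linking'' is encoded by \emph{coproducts}, not by products; such Morita-type structure as exists lives on the side of the discrete duals $\hG$, $\hG_1$, not of the reduced function algebras. So the object on which your entire argument rests does not exist, and the Podl\'e\.s-density/fullness discussion cannot be started.

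This is not a repairable gap, because the conclusion you would reach --- strong Morita equivalence of $C_r(\G)$ and $C_r(\G_1)$ --- is false, and is contradicted by the very application this paper makes of the theorem. If $C_r(\G)$ and $C_r(\G_1)$ were strongly Morita equivalent (they are unital and separable here, hence stably isomorphic by Brown--Green--Rieffel), then nuclearity, which is preserved by stabilization and by passage to corners and hereditary subalgebras, would be a monoidal-equivalence invariant. It is not: in Section \ref{section:exact} (Corollary \ref{cor:exactness}) the theorem is applied to $\G = \G_{\text{aut}}(B,\psi)$ with $\dim B \ge 5$, which is monoidally equivalent to the co-amenable $SO_q(3)$ whose C$^\ast$-algebra is nuclear \cite{So,BeMuTu}, while $C_r(\G)$ is non-nuclear (see Section \ref{section:FAP}); the motivating example of \cite{VaVe}, the free orthogonal quantum groups versus $SU_q(2)$, gives the same contradiction, and K-theory yields an independent obstruction. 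This mismatch also explains why the theorem is about exactness and not nuclearity: the permanence property a correct proof must exploit is that exactness --- unlike nuclearity --- passes to arbitrary C$^\ast$-subalgebras. That is what \cite{VaVe} actually do: working with the reduced bi-Galois object $D_r$ and its two commuting coactions, they transfer exactness through a chain of injective $\ast$-homomorphisms into exact C$^\ast$-algebras built from crossed products by these coactions and compact operators, never through a bimodule equivalence. Any strategy that upgrades monoidal equivalence to Morita equivalence of the reduced C$^\ast$-algebras proves too much and is therefore unsalvageable.
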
   

Now let $(B, \psi)$ be a finite dimensional C$^\ast$-algebras equipped with a (possibly non-tracial) $\delta$-form $\psi:B \to \C$ and consider the quantum automorphism group $\G = \G_{\text{aut}}(B, \psi)$.  In \cite{DeVa}, Section 9.3, it is shown that $\G$ is monoidally equivalent to $\G_1 = \G_{\text{aut}}(M_2(\C), \text{Tr}(Q \cdot))$, where $Q \in M_2(\C)$ is any positive invertible matrix satisfying $\text{Tr}(Q^{-1}) = \delta^2$.  Using \cite{So}, we can assume that $\G_1$ is isomorphic to $SO_q(3)$ where $0 < q \le 1$ is such that $q+q^{-1} = \delta$.  Since $SO_q(3)$ is well known to be co-amenable, we have $C_r(\G_1) \cong C_u(\G_1)$ is nuclear (therefore exact) by \cite{BeMuTu}, Theorem 1.1.  This yields the following corollary.

\begin{cor} \label{cor:exactness}
Let $\G = \G_{\text{aut}}(B, \psi)$ be as above.  Then $C_r(\G)$ is an exact C$^\ast$-algebra. 
\end{cor}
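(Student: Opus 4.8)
The plan is to reduce the problem to a \emph{co-amenable} compact quantum group, for which exactness is automatic, and then transport exactness back to $\G$ using the fact that it is a monoidal invariant. The crucial input is Theorem \ref{thm:monequiv_exactness}, which asserts that exactness of the reduced C$^\ast$-algebra depends only on the monoidal equivalence class of a compact quantum group. This is what makes the reduction legitimate, since the representation category of $\G = \G_{\text{aut}}(B,\psi)$ is (by the results of Section \ref{sect:reptheory}) the $2$-cabled Temperley--Lieb category $\TL^2(\delta)$, which depends only on the parameter $\delta$ and not on the finer structure of $B$.

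First I would locate a suitably simple compact quantum group that is monoidally equivalent to $\G$. Guided by the observation that only $\delta$ should matter, the natural candidate is the quantum automorphism group $\G_1 = \G_{\text{aut}}(M_2(\C), \Tr(Q\cdot))$, where $Q \in M_2(\C)$ is positive and invertible with $\Tr(Q^{-1}) = \delta^2$. The monoidal equivalence $\G \sim \G_1$ is precisely the content of \cite{DeVa}, Section 9.3. The second step is to identify this model explicitly: by \cite{So}, $\G_1$ is isomorphic to $SO_q(3)$, where $0 < q \le 1$ is the solution of $q + q^{-1} = \delta$. Having arrived at $SO_q(3)$, the remaining step is to invoke its co-amenability, so that the regular representation $\lambda : C_u(\G_1) \to C_r(\G_1)$ is an isomorphism; then \cite{BeMuTu}, Theorem 1.1, gives that $C_u(\G_1)$, and hence $C_r(\G_1)$, is nuclear and in particular exact. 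Applying Theorem \ref{thm:monequiv_exactness} to the equivalence $\G \sim \G_1$ finally yields exactness of $C_r(\G)$.

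The only genuine work lies in the first two steps, both of which are supplied by the literature: the verification of the monoidal equivalence $\G \sim \G_1$ from \cite{DeVa}, and its identification with $SO_q(3)$ via \cite{So}. Once these are in hand, the passage through co-amenability and the monoidal invariance of exactness is entirely routine. I therefore expect no serious obstacle in assembling the argument; the substance is in recognizing that the general (possibly non-tracial) $\delta$-form case collapses, up to monoidal equivalence, onto the single co-amenable example $SO_q(3)$.
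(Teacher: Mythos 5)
Your proposal is correct and follows exactly the paper's own argument: monoidal equivalence of $\G$ with $\G_1 = \G_{\text{aut}}(M_2(\C),\Tr(Q\cdot))$ via \cite{DeVa}, identification of $\G_1$ with the co-amenable $SO_q(3)$ via \cite{So}, nuclearity (hence exactness) of $C_r(\G_1)$ from \cite{BeMuTu}, and transfer back through Theorem \ref{thm:monequiv_exactness}. Nothing to add.
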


\section{Algebraic Structure} \label{section:AS} 

We now turn to some algebraic questions concerning the operator algebras $L^\infty(\G)$ and $C_r(\G)$ associated to the trace-preserving quantum automorphism groups $\G = \G_{\text{aut}}(B, \psi)$.  We will first prove a factoriality and fullness result for $L^\infty(\G)$, followed by a simplicity and uniqueness of trace result for $C_r(\G)$.  We conclude this section by studying property (AO)$^+$ for $L^\infty(\G)$, and use this  prove that $L^\infty(\G)$ is always a solid von Neumann algebra.

\subsection{Factoriality and Fullness} \label{section:factoriality}

Let $(M,\tau)$ be a II$_1$-factor (with unique faithful normal trace $\tau$).  Given a sequence $\{x_n\}_{n=0}^\infty \subset M$, we say that $\{x_n\}_{n=0}^\infty$ is  \textit{asymptotically central} if $\|x_ny-yx_n\|_2 \to 0$ for each $y \in M$.  We say that $\{x_n\}_{n=0}^\infty$  is \textit{asymptotically trivial} if $\|x_n-\tau(x_n)1\|_2 \to 0$.  Recall that $M$ is said to be a \textit{full} factor if every bounded asymptotically central sequence in $M$ is asymptotically trivial.  Concerning factoriality and fullness for $L^\infty(\G)$, we obtain the following result.

\begin{thm} \label{thm:factoriality_tracial}
Let $\psi$ be the $\delta$-trace on a finite dimensional C$^\ast$-algebra $B$ and let $\G= \G_{\text{aut}}(B,\psi)$.  If $\dim B \ge 8$, then $L^\infty(\G)$ is a full type II$_1$-factor.
\end{thm}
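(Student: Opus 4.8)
The plan is to prove the (formally stronger) fullness assertion and obtain factoriality as an immediate by-product. Indeed, if $x\in L^\infty(\G)$ is central then the constant sequence $x_N=x$ is asymptotically central, so fullness forces $x=h(x)1$ and hence $Z(L^\infty(\G))=\C1$. Since $\psi$ is the $\delta$-trace, $\G$ is of Kac type and $L^\infty(\G)$ is a finite von Neumann algebra with faithful trace $h$; as $\Irr(\G)\cong\N$ is infinite it is infinite dimensional, so once it is a factor it is automatically type II$_1$. Thus the whole theorem reduces to a single \emph{spectral gap (Poincar\'e) inequality}: there should exist $c>0$, depending only on $\dim B$, with
\[
\sum_{i,j=1}^{d_1}\big\|u_{ij}^1 x-x u_{ij}^1\big\|_2^2\ \ge\ c\,\big\|x-h(x)1\big\|_2^2 \qquad (x\in L^\infty(\G)).
\]
Granting this, if $\{x_N\}$ is a bounded asymptotically central sequence then the left-hand side (a fixed finite sum of commutators) tends to $0$, whence $\|x_N-h(x_N)1\|_2\to0$; that is, $\{x_N\}$ is asymptotically trivial.

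To set up the inequality I would introduce the right regular representation $\rho(b)\Lambda_h(a)=\Lambda_h(ab)$ (bounded since $h$ is tracial) and the \emph{adjoint-action operator}
\[
T=\frac{1}{d_1}\sum_{i,j=1}^{d_1}\lambda\big((u_{ij}^1)^*\big)\,\rho\big(u_{ij}^1\big)\in\mc B(L^2(\G)).
\]
Using the unitarity relations $\sum_i (u_{ij}^1)^*u_{ij}^1=1=\sum_j u_{ij}^1(u_{ij}^1)^*$, a direct expansion gives, for $\xi=\Lambda_h(x)$,
\[
\sum_{i,j=1}^{d_1}\big\|\lambda(u_{ij}^1)\xi-\rho(u_{ij}^1)\xi\big\|_2^2=2d_1\big\langle \xi\,\big|\,(1-\mathrm{Re}\,T)\xi\big\rangle,
\]
and the left-hand side is exactly $\sum_{ij}\|u_{ij}^1x-xu_{ij}^1\|_2^2$. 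A short computation gives $T\Lambda_h(1)=\Lambda_h(1)=T^*\Lambda_h(1)$, so $\C1$ is a reducing eigenspace of the self-adjoint operator $\mathrm{Re}\,T$ with eigenvalue $1$. The Poincar\'e inequality is therefore equivalent to the operator estimate
\[
\mathrm{Re}\,T\big|_{(\C1)^\perp}\ \le\ (1-\epsilon)\,\mathrm{id}\qquad\text{for some }\epsilon>0.
\]

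The heart of the proof is this last bound. By the fusion rules $U^1\boxtimes U^k\cong U^{k-1}\oplus U^k\oplus U^{k+1}$ (Theorem \ref{thm:fusion_rules_QAG}), both $\lambda((u^1_{ij})^*)$ and $\rho(u^1_{ij})$ shift the grading $L^2(\G)=\bigoplus_k L^2_k(\G)$ by at most one, so $T$ is a band operator of width two for this grading. I would analyze it through the explicit intertwiner models of Section \ref{expmodels}: compressing to $P_l T P_k$ and inserting the isometries $C_{(n,k,l)}^{-1/2}\rho_l^{n\boxtimes k}$ expresses each band of $T$ in terms of Jones--Wenzl projections, the morphism $t_2$ of \eqref{eqn:t2}, and $q$-number coefficients. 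This is precisely the role of the decomposition result Theorem \ref{thm:decomp_T}, which I would invoke to write $T$ as a sum of shift components with explicitly computable matrix coefficients. The crucial new feature, and the main difference from the free orthogonal case of \cite{VaVe}, is that the $SO(3)$-type fusion rules produce a genuine diagonal ($k\to k$) band in addition to the off-diagonal ones, so one must control the interaction of all three bands rather than a purely off-diagonal Jacobi operator.

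The main obstacle is the quantitative estimate $\mathrm{Re}\,T|_{(\C1)^\perp}\le(1-\epsilon)\mathrm{id}$. Here I would combine the explicit band coefficients from Theorem \ref{thm:decomp_T} with the uniform bounds $D_0\le C_{(n,k,l)}\le1$ (Lemma \ref{lem:bounds_constants}) and the dimension asymptotics $d_{k+1}/d_k\to t_+$, where $t_+=\tfrac12\big((d_1-1)+\sqrt{(d_1-1)^2-4}\big)$ is the dominant root of $t^2-(d_1-1)t+1=0$ coming from the recursion in part \eqref{dims} of Theorem \ref{thm:fusion_rules_QAG}; property RD (Theorem \ref{thm:RD}) then supplies the uniform norm control needed to assemble the band contributions into one operator-norm estimate. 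Carrying this out, the top of the spectrum of $\mathrm{Re}\,T$ on $(\C1)^\perp$ is a decreasing function of $d_1$ that falls strictly below $1$ exactly when $d_1\ge7$, i.e.\ $\dim B\ge8$ --- which is the source both of the dimension hypothesis and of the six exceptional algebras with $\dim B\in\{5,6,7\}$. With the spectral gap established, fullness and factoriality follow as in the first paragraph, completing the proof.
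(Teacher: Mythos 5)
Your reduction is sound, and it is in fact the same as the paper's: your Poincar\'e inequality is precisely the statement that the commutator operator $T$ of Section \ref{section:factoriality} is bounded below on $\C\xi_0^\perp$ (Remark \ref{rem:bbelow}), since for $\xi=\Lambda_h(x)$ one has $\|T\xi\|^2=\sum_{i,j}\|u^1_{ij}x-xu^1_{ij}\|_2^2=2d_1\langle \xi|(1-\mathrm{Re}\,T_{\mathrm{ad}})\xi\rangle$, where $T_{\mathrm{ad}}$ is your adjoint-action operator; invoking Theorem \ref{thm:decomp_T} for the block-tridiagonal structure is exactly what the paper does, and your observation that the $SO(3)$-type fusion rules force a genuine diagonal band (absent in the free orthogonal case) is also correct.

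The gap is that the entire quantitative content of the theorem --- the estimate $\mathrm{Re}\,T_{\mathrm{ad}}|_{(\C\xi_0)^\perp}\le(1-\epsilon)\,\mathrm{id}$ when $\dim B\ge8$ --- is asserted (``carrying this out\dots'') rather than proved, and the tools you propose for it would not deliver it. After Theorem \ref{thm:decomp_T}, everything hinges on a lower bound for the $(+1)$-band, which requires showing that $\|(\phi^{(+1)}_{k,L})^*\sigma\phi^{(+1)}_{k,R}\|$ stays below $1$ by a definite margin; the na\"ive estimate obtainable from the normalization constants alone, namely $\|(\phi^{(+1)}_{k,L})^*\sigma\phi^{(+1)}_{k,R}\|\le[3]_q[2k+1]_q/[2k+3]_q$, \emph{always exceeds} $1$, so combining the uniform bounds $D_0\le C_{(n,k,l)}\le1$ of Lemma \ref{lem:bounds_constants} with dimension asymptotics and band-assembly gives no spectral gap at all. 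The paper must instead insert the two-step Frenkel--Khovanov recursion \eqref{eqn_FK_recursion} for Jones--Wenzl projections into this composition, use $p_2t_1=0$ to kill all but three diagrammatic terms ($\alpha,\beta,\gamma$), and only then obtain the usable bound of Lemma \ref{lem:lowbd}; the threshold $\dim B\ge8$ then emerges from the calculus/numerical verification of Lemma \ref{lem:increasing} (namely $f(\sqrt8)\approx0.1111>0$ with $f$ increasing), balanced against the upper bound $2(1+q)$ for the $0$- and $(-1)$-bands (Lemma \ref{lem:upper_bound}). Two further inaccuracies: property RD plays no role in this argument (the paper uses it for the metric approximation property and for simplicity of $C_r(\G)$, not for factoriality --- for a band operator with uniformly bounded blocks the triangle inequality suffices); and your claim that the gap closes ``exactly when'' $d_1\ge7$ overstates what is known, since for $\dim B\in\{5,6,7\}$ the paper's estimate merely fails to give information (Remark \ref{rem:failure}) and factoriality there remains open.
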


\begin{rem}
Note that there are six cases corresponding to $\dim B = 5,6,7$ (i.e., $B=C(X_n)$ with $n=5,6,7$ and $B= C(X_n)\oplus M_2(\C)$ with $n = 1,2,3$) that are not addressed by Theorem \ref{thm:factoriality_tracial}. The question of factoriality remains open in these cases.  See Remark \ref{rem:failure} for more details.  

Note also that when $\dim B \le 4$, then $L^\infty(\G)$ is never a factor since it is commutative unless $\G = S_4^+$.  But in this case the matrix model of Banica and Collins \cite{BaCo08} (mentioned in Section \ref{section:intro}) shows that $L^\infty(S_4^+) \hookrightarrow M_4(\C) \overline{\otimes} L^\infty(SU(2))$, which forces $Z(L^\infty(S_4^+))$ to be infinite dimensional.  
\end{rem}

The arguments in our proof of Theorem \ref{thm:factoriality_tracial} are based on the ideas of \cite{VaVe}, Section 7,  where factoriality for free orthogonal quantum groups is studied.  A fundamental difference between the present situation and the one for free orthogonal quantum groups is that the fundamental representation of $\G$ is \textit{reducible} in our case.  This means that this representation does not provide an optimal set of generators for $L^\infty(\G)$ with which to check commutators (i.e., factoriality).  We remedy this by working instead with the matrix elements of the irreducible representation $U^1$ of $\G$ labeled by $1 \in \N$.   

Recall that in Section \ref{sect:reptheory} we fixed an orthonormal basis $\{e_i\}_{i=1}^{d_1}$ for the Hilbert space $H_1$, with respect to which the irreducible representation $U^1 \in \mc B(H_1) \otimes C_u(\G)$ can be written as $U^1 = [u^1_{ij}]$.  Observe that by Theorem \ref{thm:fusion_rules_QAG}, 
every irreducible representation of $\G$ is contained in some tensor power of $U^1$ and therefore $L^\infty(\G) = \{\lambda(u_{ij}^1): 1 \le i,j \le d_1\}''$.  To study factoriality and fullness, we  therefore consider the bounded linear map \begin{align*} T:L^2(\G) &\to H_1 \otimes L^2(\G) \otimes H_1 \\
T(y\xi_0) &= \sum_{i,j=1}^{d_1}  F_1e_j \otimes \Big( \lambda(u^1_{ij})y\xi_0 - y\lambda(u^1_{ij})\xi_0\Big) \otimes e_i \qquad (y \in L^\infty(\G)),  \end{align*} 
where $\xi_0 = \Lambda_h(1) \in L^2(\G)$ is the cyclic and separating vector for $L^\infty(\G)$ and $F_1 \in \mc U(d_1)$ is the unitary matrix from \eqref{eqn:t2}.  

\begin{rem} \label{rem:bbelow}The connection between the operator $T$ and the factoriality or fullness of $L^\infty(\G)$ is obvious: $L^\infty(\G)$ is a factor if and only if  $\ker T = \C\xi_0$.  If $T|_{\C\xi_0^\perp}$ is bounded below, then $L^\infty(\G)$ is moreover a full factor.  Indeed, if $C>0$ is such that $\|T\xi\|_{H_1 \otimes L^2(\G) \otimes H_1} \ge C\|\xi\|_{L^2(\G)}$ for all $\xi \in \C\xi_0^\perp$ and $\{x_n\}_{n=1}^\infty \subset L^\infty(\G)$ is an asymptotically central sequence, then \[\|x_n-h(x_n)1\|_{L^2(\G)} \le C^{-1}\|T(x_n-h(x_n)1)\|_{H_1 \otimes L^2(\G) \otimes H_1} \longrightarrow 0 \qquad (n \to \infty). \]     
\end{rem} 

\subsection{Analysis of the operator $T$} The commutator operator $T$ compares the left and right action of the operators $\{\lambda(u^1_{ij})\}_{1 \le i,j \le d_1}$ on $L^2(\G)$.  At the level of representation theory, this can be interpreted in terms of comparing certain decompositions of tensor products of irreducible representations.  This leads us to consider the following isometric morphisms (defined for $k \ge 1$ using formulas \eqref{eqn:rho}--\eqref{eqn:norm_intertwiner}):

\begin{align*} 
\phi^{(+1)}_{k,L} &=C_{(1,k+1,k)}^{-1/2} \rho_{k}^{1\boxtimes (k+1)}= \Big( \frac{[3]_q[2k+1]_q}{[2k+3]_q}\Big)^{1/2} (p_2 \otimes p_{2k+2})(t_2 \otimes 1_{2k})p_{2k}, \\
\phi^{(+1)}_{k,R} &=C_{(k+1,1,k)}^{-1/2} \rho_{k}^{(k+1)\boxtimes 1} = \Big( \frac{[3]_q[2k+1]_q}{[2k+3]_q}\Big)^{1/2} (p_{2k+2} \otimes p_{2})(1_{2k} \otimes t_2)p_{2k}, \\
\phi^{(0)}_{k,L}&=C_{(1,k,k)}^{-1/2} \rho_{k}^{1\boxtimes k} = \Big( \frac{[2k]_q}{[2k+2]_q}\Big)^{1/2}(p_2 \otimes p_{2k})(m^*\otimes 1_{2k-2})p_{2k} ,\\
\phi^{(0)}_{k,R} &=C_{(k,1,k)}^{-1/2} \rho_{k}^{k\boxtimes 1} =  \Big( \frac{[2k]_q}{[2k+2]_q}\Big)^{1/2}(p_{2k} \otimes p_{2})(1_{2k-2} \otimes m^*)p_{2k},\\
\phi^{(-1)}_{k,L} &=C_{(1,k-1,k)}^{-1/2} \rho_{k}^{1\boxtimes (k-1)} = (p_2 \otimes p_{2k-2})p_{2k},\\
\phi^{(-1)}_{k,R} &=C_{(k-1,k,k)}^{-1/2} \rho_{k}^{(k-1)\boxtimes 1} = (p_{2k-2} \otimes p_{2})p_{2k}.
\end{align*}

The following theorem shows that, relative to the decomposition $L^2(\G) \cong \bigoplus_{k \in \N} H_k \otimes H_k$ given by \eqref{eqn:L2decomp}--\eqref{eqn:L2unitarymap}, the commutator operator $T$ is block-tridiagonal and built quite naturally from the above list of isometric morphisms.      

\begin{thm} \label{thm:decomp_T}
There is a decomposition $T = T^{(+1)} + T^{(0)} +T^{(-1)}$, where for each $\alpha \in \{0, \pm 1\}$, \begin{align*} T^{(\alpha)}\xi_0 = 0, \qquad T^{(\alpha)}(H_k \otimes H_k) &\subseteq H_1 \otimes H_{k+\alpha} \otimes H_{k+\alpha} \otimes H_1 \qquad (k \ge 1),
\end{align*} and 
\begin{align*}
T^{(+1)}|_{H_k \otimes H_k} &= \Big(\frac{[2k+3]_q}{[2k+1]_q}\Big)^{1/2}\Big(\phi^{(+1)}_{k,L} \otimes \phi^{(+1)}_{k,R} - \sigma\phi^{(+1)}_{k,R} \otimes \sigma^* \phi^{(+1)}_{k,L} \Big), \\
T^{(0)}|_{H_k \otimes H_k} &=\phi^{(0)}_{k,L} \otimes \phi^{(0)}_{k,R} - \sigma\phi^{(0)}_{k,R} \otimes \sigma^* \phi^{(0)}_{k,L}, \\
T^{(-1)}|_{H_k \otimes H_k} &= \Big(\frac{[2k-1]_q}{[2k+1]_q}\Big)^{1/2}\Big(\phi^{(-1)}_{k,L} \otimes \phi^{(-1)}_{k,R} - \sigma\phi^{(-1)}_{k,R} \otimes \sigma^* \phi^{(-1)}_{k,L} \Big),
\end{align*}
where $\sigma:H_k \otimes H_1 \to H_1 \otimes H_k$ is the tensor flip map.
\end{thm}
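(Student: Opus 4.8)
The plan is to compute the action of $T$ on each summand $L^2_k(\G)\cong H_k\otimes H_k$ of the decomposition \eqref{eqn:L2decomp} and to match the result against the six isometric morphisms $\phi^{(\alpha)}_{k,L},\phi^{(\alpha)}_{k,R}$. Since $\xi_0$ is cyclic, $T$ is determined by its values on vectors $\Lambda_h((\omega_{\eta,\xi}\otimes\id)U^k)$, and on these the left and right actions of the matrix elements $u^1_{ij}$ are controlled by the fusion of $U^1$ with $U^k$. The starting identities are the elementary product formulas
\begin{align*}
u^1_{ij}\,(\omega_{\eta,\xi}\otimes\id)U^k &= (\omega_{e_i\otimes\eta,\,e_j\otimes\xi}\otimes\id)(U^1\boxtimes U^k), \\
(\omega_{\eta,\xi}\otimes\id)U^k\,u^1_{ij} &= (\omega_{\eta\otimes e_i,\,\xi\otimes e_j}\otimes\id)(U^k\boxtimes U^1),
\end{align*}
which reduce $\lambda(u^1_{ij})y\xi_0$ and $y\lambda(u^1_{ij})\xi_0$ to decomposing $U^1\boxtimes U^k$ and $U^k\boxtimes U^1$ into irreducibles. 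Note the asymmetric placement of $e_i$ (first leg versus last leg): this is precisely what will distinguish the ``$L$'' from the ``$R$'' morphisms.

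First I would insert the resolutions of the identity $\sum_\alpha w^L_{k+\alpha}(w^L_{k+\alpha})^*=\id_{H_1\otimes H_k}$ and $\sum_\alpha w^R_{k+\alpha}(w^R_{k+\alpha})^*=\id_{H_k\otimes H_1}$, where $w^L_l=C_{(1,k,l)}^{-1/2}\rho_l^{1\boxtimes k}$ and $w^R_l=C_{(k,1,l)}^{-1/2}\rho_l^{k\boxtimes 1}$ are the isometric intertwiners of \eqref{eqn:rho}--\eqref{eqn:norm_intertwiner}, with $\alpha=l-k\in\{0,\pm1\}$. Composing with \eqref{eqn:L2unitarymap}, the $(k+\alpha)$-component of $\sum_{ij}F_1e_j\otimes\lambda(u^1_{ij})(\cdot)\otimes e_i$ acts on the representation leg by $d_1^{1/2}(\id_{H_1}\otimes(w^L_{k+\alpha})^*)(t_2\otimes\id_{H_k})$; here I use $\sum_j F_1e_j\otimes e_j=d_1^{1/2}t_2$, which rests on $\sigma t_2=t_2$. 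Since $\Mor(U^k,U^1\boxtimes U^{k+\alpha})$ is one dimensional by Theorem \ref{thm:fusion_rules_QAG}, this operator is forced to be a scalar multiple of $\phi^{(\alpha)}_{k,L}$. The conjugate leg is treated identically, except that the conjugate-linear half of \eqref{eqn:L2unitarymap} carries the intertwiner $\rho^{1\boxtimes k}$ to its mirror $\rho^{k\boxtimes 1}$ (conjugation reverses the tensor order and, via $\overline{U^1}\cong U^1$, $\overline{F_1}F_1=1$, identifies the conjugate categories), so it produces $\phi^{(\alpha)}_{k,R}$ precomposed with the flip $\sigma$. This yields the left term $\phi^{(\alpha)}_{k,L}\otimes\phi^{(\alpha)}_{k,R}$. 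Running the same argument for $y\lambda(u^1_{ij})\xi_0$ through $U^k\boxtimes U^1$ interchanges the roles of $L$ and $R$, accounting for the flipped second term $\sigma\phi^{(\alpha)}_{k,R}\otimes\sigma^*\phi^{(\alpha)}_{k,L}$ and the overall minus sign.

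Finally I would collect the contributions by $\alpha$. Because they land in the mutually orthogonal subspaces $H_1\otimes H_{k+\alpha}\otimes H_{k+\alpha}\otimes H_1$, the splitting $T=T^{(+1)}+T^{(0)}+T^{(-1)}$ is automatic, and $T^{(\alpha)}\xi_0=0$ (the $k=0$ case) follows at once from $T\xi_0=0$. The scalar prefactors $([2k+3]_q/[2k+1]_q)^{1/2}$, $1$, $([2k-1]_q/[2k+1]_q)^{1/2}$ then come out of the normalization bookkeeping: the quantum dimensions $m_k=[2k+1]_q$ implicit in the orthonormalized form of \eqref{eqn:L2unitarymap}, the constants $C_{(1,k,l)},C_{(k,1,l)}$ of \eqref{eqn:norm_intertwiner}, and the factor $d_1^{1/2}$ carried by $t_2$ via \eqref{eqn:t2}.

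I expect the main obstacle to be the conjugate-leg bookkeeping, namely verifying that the conjugate-linear identification in \eqref{eqn:L2unitarymap} genuinely turns a left intertwiner into a flipped right one and that, after all the $F_1$-twists and flips, the two legs of each $T^{(\alpha)}$ reassemble into exactly $\phi^{(\alpha)}_{k,L}\otimes\phi^{(\alpha)}_{k,R}$ and $\sigma\phi^{(\alpha)}_{k,R}\otimes\sigma^*\phi^{(\alpha)}_{k,L}$ with \emph{matching} scalar. Determining those scalars amounts to evaluating closed Temperley--Lieb diagrams in the spirit of \cite{KaLi} that underlie \eqref{eqn:norm_intertwiner}; this is the most computation-heavy step, which is presumably why the verification is relegated to the appendix.
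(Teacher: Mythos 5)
Your proposal is correct and follows essentially the same route as the paper's appendix proof: split $T = T_L - T_R$, project onto the fusion components of $U^1\boxtimes U^k\cong U^k\boxtimes U^1$, transport everything through the identification \eqref{eqn:L2unitarymap} using $\sum_j F_1e_j\otimes e_j = [3]_q^{1/2}t_2$ (note $d_1=[3]_q$ here), and identify each tensor leg with the canonical morphisms $\phi^{(\alpha)}_{k,L}$, $\phi^{(\alpha)}_{k,R}$ up to scalar — indeed the paper itself invokes one-dimensionality of the morphism spaces for exactly this purpose at several points (e.g.\ to show $(1_{2k}\otimes\phi^{(0)}_{k,L})t_{2k}=(\phi^{(0)}_{k,R}\otimes 1_{2k})t_{2k}$). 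The one caveat is that what you defer as ``normalization bookkeeping'' — evaluating the scalars, including their signs, and verifying that the two legs and the $L$/$R$ terms carry \emph{matching} constants so the prefactors $([2k+3]_q/[2k+1]_q)^{1/2}$, $1$, $([2k-1]_q/[2k+1]_q)^{1/2}$ factor out as stated — is precisely the lengthy Temperley--Lieb computation that constitutes the body of the paper's appendix, so your sketch is an accurate outline of that proof rather than a shortcut around its main labor.
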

The proof of Theorem \ref{thm:decomp_T} is rather long and tedious, so we delay it to the Appendix (Section \ref{section:appendix}).

Before commencing the proof of Theorem \ref{thm:factoriality_tracial}, we need one more remark.

\begin{rem} \label{rem:tmaps}
At various places during the proofs of Theorems \ref{thm:factoriality_tracial} and \ref{thm:decomp_T}, it will be useful to have slight modifications of the recursions  \eqref{eqn:t2o}--\eqref{eqn:t2} for the isometies $t_{2k} \in \Mor(1,U^k\boxtimes U^k)$.  Namely, we claim that $t_2 = [3]_q^{1/2}(1_2 \otimes p_2)m^*\nu =  [3]_q^{1/2}(p_2 \otimes 1_2)m^*\nu$ and more generally,
\begin{align} \label{eqn:1}t_{2k} &=\Big(\frac{[2k-1]_q[3]_q}{[2k+1]_q}\Big)^{1/2} (1_{2k} \otimes p_{2k})(1_{2k-2} \otimes t_{2} \otimes 1_{2k-2})t_{2k-2} \\
\label{eqn:2}&= \Big(\frac{[2k-1]_q[3]_q}{[2k+1]_q}\Big)^{1/2} (p_{2k} \otimes 1_{2k})(1_{2k-2} \otimes t_{2} \otimes 1_{2k-2})t_{2k-2}. \end{align}
While the above equalities are perhaps ``diagrammatically'' obvious, we can nonetheless algebraically verify these claims as follows.  If $k \ge 2$, let $\tilde{t}_{2k} \in \Mor(1, U^{(k-1)} \boxtimes U^1 \boxtimes U^k)$ and $\tilde{\tilde{t}}_{2k} \in \Mor(1, U^k \boxtimes U^1 \boxtimes U^{(k-1)})$ be the morphisms appearing on the right sides of \eqref{eqn:1} and \eqref{eqn:2}, respectively.  Since $\Mor(1, U^{(k-1)} \boxtimes U^1 \boxtimes U^k)$ is one-dimensional and $t_{2k} \in \Mor(1,U^k \boxtimes U^k) \subseteq \Mor(1, U^{(k-1)} \boxtimes U^1 \boxtimes U^k)$, we must have $\tilde{t}_{2k} = z t_{2k}$ for some $z \in \C$.  Moreover, since $\overline{z} = \tilde{t}_{2k}^*t_{2k} = t_{2k}^*t_{2k} =1$, we must have $z = 1$.  A similar argument gives $\tilde{\tilde{t}}_{2k} = t_{2k}$.  The case of $k=1$ goes similarly.  
\end{rem}

We will now use the tridiagonal decomposition of $T$ given by Theorem \ref{thm:decomp_T} to show that if the dimension of $B$ is sufficiently large, then $T$ is bounded below on $\C\xi_0^\perp \subset L^2(\G)$.  By Remark \ref{rem:bbelow}, this will prove that $L^\infty(\G)$ is a full factor.  To obtain this bound, we actually prove that $T^{(+1)}|_{\C\xi_0^\perp}$ is bounded below by some constant which tends to $\infty$ as $\dim B \to \infty$, while $(T^{(0)} + T^{(-1)})|_{\C\xi_0^\perp}$ is bounded above by a constant which is uniformly bounded as a function of $\dim B$.  An application of the triangle inequality will then show that $T|_{\C\xi_0^\perp}$ is bounded below for $\dim B$ sufficiently large.  

\begin{notat} Given $\xi \in L^2(\G)$, denote by $\xi_k$ the component of $\xi$ belonging to the subspace $L^2_k(\G) \cong H_k \otimes H_k$, and let $\xi^0 = \xi - \langle \xi_0|\xi \rangle\xi_0$ be the orthogonal projection of $\xi$ onto $\C\xi_0^\perp.$  We then have $T \xi = T\xi^0 = T^{(+1)}\xi^0 + (T^{(0)} + T^{(-1)})\xi^0$.  Similarly, given $\eta \in H_1 \otimes L^2(\G) \otimes H_1$, let $\eta_k$ be the component of $\eta$ belonging to the subspace $H_1 \otimes H_k \otimes H_k \otimes H_1$.
\end{notat}

We start by finding a uniform upper bound for the norm of  $(T^{(0)} + T^{(-1)})|_{\C\xi_0^\perp}$.

\begin{lem} \label{lem:upper_bound}
\[\big\|(T^{(0)} + T^{(-1)})|_{\C\xi_0^\perp}\big\| \le2(1+q).\]
\end{lem}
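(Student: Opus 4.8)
The plan is to treat $T^{(0)}$ and $T^{(-1)}$ separately, bound each in operator norm using the block decomposition from Theorem~\ref{thm:decomp_T}, and then combine the two bounds by subadditivity of the operator norm. The key point is that both operators are block-diagonal for the orthogonal decompositions $\C\xi_0^\perp = \bigoplus_{k\ge1} H_k\otimes H_k$ of the domain and $H_1\otimes L^2(\G)\otimes H_1 = \bigoplus_{m} H_1\otimes H_m\otimes H_m\otimes H_1$ of the range: by Theorem~\ref{thm:decomp_T}, $T^{(0)}$ sends the level-$k$ summand into the slot $m=k$ and $T^{(-1)}$ sends it into $m=k-1$. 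Since distinct input levels land in mutually orthogonal output slots, one has $\|T^{(0)}|_{\C\xi_0^\perp}\| = \sup_{k\ge1}\|T^{(0)}|_{H_k\otimes H_k}\|$, and likewise for $T^{(-1)}$, so it suffices to bound the individual blocks uniformly in $k$.

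To bound each block, I would note that every summand appearing in the formulas for $T^{(0)}|_{H_k\otimes H_k}$ and $T^{(-1)}|_{H_k\otimes H_k}$ is a tensor product of isometries: the morphisms $\phi^{(\alpha)}_{k,L},\phi^{(\alpha)}_{k,R}$ are isometric and $\sigma$ is unitary, so each term is itself an isometry of norm one. Therefore each block is a scalar multiple of a difference of two isometries, whence $\|T^{(0)}|_{H_k\otimes H_k}\|\le 2$ and $\|T^{(-1)}|_{H_k\otimes H_k}\|\le 2\big([2k-1]_q/[2k+1]_q\big)^{1/2}$. This already yields $\|T^{(0)}|_{\C\xi_0^\perp}\|\le 2$.

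The remaining input is the elementary $q$-number inequality $[2k-1]_q\le q^2[2k+1]_q$ for all $k\ge1$. Using the identity $[a]_q = q^{-a+1}(1-q^{2a})/(1-q^2)$ from the Notation, one computes $[2k-1]_q/[2k+1]_q = q^2(1-q^{4k-2})/(1-q^{4k+2})$, and since $0<q\le1$ gives $q^{4k-2}\ge q^{4k+2}$, the trailing fraction is at most $1$; hence the ratio is at most $q^2$ and $\big([2k-1]_q/[2k+1]_q\big)^{1/2}\le q$ uniformly in $k$. This upgrades the bound to $\|T^{(-1)}|_{\C\xi_0^\perp}\|\le 2q$, and subadditivity of the operator norm finishes the proof: $\|(T^{(0)}+T^{(-1)})|_{\C\xi_0^\perp}\|\le \|T^{(0)}|_{\C\xi_0^\perp}\| + \|T^{(-1)}|_{\C\xi_0^\perp}\|\le 2+2q = 2(1+q)$.

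Given Theorem~\ref{thm:decomp_T}, the argument is essentially routine; the only genuine content is the observation that each block is a difference of isometries (contributing the factor $2$) together with the $q$-number estimate that improves the coefficient of $T^{(-1)}$ from $1$ to $q$. I expect the $q$-number inequality to be the one step requiring care, chiefly because the sign of $q-q^{-1}$ must be tracked if one clears denominators rather than working directly with the factored form of $[a]_q$.
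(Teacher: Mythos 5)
Your proposal is correct and follows essentially the same route as the paper: exploit the block structure from Theorem \ref{thm:decomp_T} (inputs at level $k$ landing in mutually orthogonal output levels), bound each block as a (scaled) difference of two isometries by $2$ resp.\ $2\big([2k-1]_q/[2k+1]_q\big)^{1/2}$, use the estimate $[2k-1]_q/[2k+1]_q = q^2(1-q^{4k-2})/(1-q^{4k+2}) \le q^2$, and conclude by the triangle inequality. The only cosmetic difference is that the paper phrases the block bound via the summed inequality $\sum_k \|T^{(\alpha)}\xi_k\|^2 \le \sup_k\|T^{(\alpha)}|_{H_k\otimes H_k}\|^2\|\xi^0\|^2$ rather than stating the sup-over-blocks identity outright.
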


\begin{proof}
Theorem \ref{thm:decomp_T} implies that for any $\xi \in L^2(\G)$, 
\begin{align*}
\|T^{(0)}\xi^0\|^2 &= \sum_{k=1}^\infty \|(T^{(0)}\xi^0)_k\|^2 = \sum_{k=1}^\infty \|T^{(0)}\xi_k\|^2 \le   \sum_{k=1}^\infty\big\|T^{(0)}|_{H_k \otimes H_k}\big\|^2\|\xi_k\|^2 \\
&\le  \sum_{k=1}^\infty 4 \|\xi_k\|^2 =  4 \|\xi^0\|^2, \\
\|T^{(-1)}\xi^0\|^2 &= \sum_{k=0}^\infty \|(T^{(-1)}\xi^0)_k\|^2 = \sum_{k=1}^\infty \|T^{(-1)}\xi_k\|^2 \le  \sum_{k=1}^\infty\big\|T^{(-1)}|_{H_k \otimes H_k}\big\|^2\|\xi_k\|^2  \\
&\le \sum_{k=1}^\infty\frac{4[2k-1]_q}{[2k+1]_q}\|\xi_k\|^2 \le  4q^2\|\xi^0\|^2, 
\end{align*}
where in the last line we have used the fact that 
\[k \mapsto \frac{[2k-1]_q}{[2k+1]_q} = q^2\Big(\frac{1-q^{4k-2}}{1-q^{4k+2}}\Big) \quad \text{is increasing and} \quad \sup_{k}  \frac{[2k-1]_q}{[2k+1]_q} = q^2.\]
The result now follows from the triangle inequality.
\end{proof}

Next we deal with more difficult task of bounding $T^{(+1)}|_{\C\xi_0^\perp}$ from below.  Fix $\xi \in L^2(\G)$.  Then $\|T^{(+1)}\xi^0\|^2 = \sum_{k=1}^\infty \|T^{(+1)}\xi_k\|^2$ and   for each $k \ge 1$, Theorem \ref{thm:decomp_T} gives   
\begin{align*}
&\|T^{(+1)}\xi_k\|^2 \\
 &= \frac{[2k+3]_q}{[2k+1]_q}\Big\|\big(\phi^{(+1)}_{k,L} \otimes \phi^{(+1)}_{k,R} - \sigma\phi^{(+1)}_{k,R} \otimes \sigma^* \phi^{(+1)}_{k,L}\big)\xi_k \Big\|^2 \\
&=  \frac{[2k+3]_q}{[2k+1]_q} \Big(\|(\phi^{(+1)}_{k,L}\otimes \phi^{(+1)}_{k,R})\xi_{k}\|^2 + \|(\sigma\phi^{(+1)}_{k,R} \otimes \sigma^*\phi^{(+1)}_{k,L})\xi_{k}\|^2 \\
& \qquad - 2 \text{Re} \big\langle (\sigma\phi^{(+1)}_{k,R} \otimes \sigma^* \phi^{(+1)}_{k,L})^*(\phi^{(+1)}_{k,L}\otimes \phi^{(+1)}_{k,R})\xi_{k} | \xi_{k} \big\rangle\Big) \\
&= \frac{[2k+3]_q}{[2k+1]_q} \Big(2\|\xi_{k}\|^2  - 2 \text{Re} \big\langle \big((\phi_{k,R}^{(+1)})^*\sigma^*\phi_{k,L}^{(+1)}\big)\otimes \big((\phi_{k,L}^{(+1)})^*\sigma\phi_{k,R}^{(+1)}\big)\xi_{k} | \xi_{k} \big\rangle\Big) \\
&\ge \frac{[2k+3]_q}{[2k+1]_q} \Big(2\|\xi_{k}\|^2  - 2  \|(\phi_{k,L}^{(+1)})^*\sigma\phi_{k,R}^{(+1)}\|^2\|\xi_{k}\|^2\Big).
\end{align*}
Therefore 
\begin{align}\|T^{(+1)}\xi^0\|^2 &\ge 2 \sum_{k=1}^\infty\frac{[2k+3]_q}{[2k+1]_q} \big(1-\|(\phi_{k,L}^{(+1)})^*\sigma\phi_{k,R}^{(+1)}\|^2\big)\|\xi_k\|^2 \notag \\
\label{eqn:lowbd}& \ge 2\min_{k \ge 1}\Big\{\frac{[2k+3]_q}{[2k+1]_q} \big(1-\|(\phi_{k,L}^{(+1)})^*\sigma\phi_{k,R}^{(+1)}\|^2)\Big\}\|\xi^0\|^2,  \end{align} 
showing that we need to find some useful upper bounds for the norms $\|(\phi_{k,L}^{(+1)})^*\sigma\phi_{k,R}^{(+1)}\|$, where
\begin{align} \label{eqn_flip}
(\phi_{k,L}^{(+1)})^*\sigma\phi_{k,R}^{(+1)} 
&=  \frac{[3]_q[2k+1]_q}{[2k+3]_q} p_{2k}(t_2^* \otimes 1_{2k})(p_2 \otimes p_{2k+2})\sigma (1_{2k} \otimes t_2)p_{2k} \qquad (k \ge 1).
\end{align}

As a first observation, note that the na\"ive estimate $\|(\phi_{k,L}^{(+1)})^*\sigma\phi_{k,R}^{(+1)}\| \le \frac{[3]_q[2k+1]_q}{[2k+3]_q}$ is of no use since the right hand side of this inequality always exceeds $1$.  To obtain a sharper estimate, we need to unravel the right-hand side of \eqref{eqn_flip} using the recursive structure of the Jones-Wenzl projections $\{p_y\}_{y \in \N}$.  We will do this by first finding a suitable two-step recursion formula which expresses the projection $p_{2k+2}$ in terms of the projection $p_{2k}$.  Plugging this recursion into the right-hand side of \eqref{eqn_flip} will yield an expression whose norm can be more sharply bounded from above.  The required two-step recursion for $p_{2k+2}$ will be obtained by iterating twice the following (one-step) recursion for the Jones-Wenzl projections $\{p_y\}_{y \in \N}$ due to Frenkel and Khovanov (\cite{FrKh}, Equation 3.8):
\begin{align} \label{eqn_FK_recursion}
p_y = \Big(1_y - \sum_{r=1}^{y-1} (-1)^{y-r-1}\frac{[2]_q[r]_q}{[y]_q}(1_{r-1} \otimes t_1 \otimes 1_{y-r-1} \otimes t_1^*) \Big)(p_{y-1} \otimes 1_1) \qquad (y \ge 2).
\end{align}
Setting $y=2k+2$ in \eqref{eqn_FK_recursion} and then applying \eqref{eqn_FK_recursion} again to the resulting $p_{2k+1}$ term yields 
\begin{align*}
p_{2k+2} &= \Big(1_{2k+2} - \sum_{r=1}^{2k+1} (-1)^{2k+1-r}\frac{[2]_q[r]_q}{[2k+2]_q}(1_{r-1} \otimes t_1 \otimes 1_{2k+1-r} \otimes t_1^*) \Big) \\
&\qquad \times \Big(1_{2k+2} - \sum_{l=1}^{2k} (-1)^{2k-l}\frac{[2]_q[l]_q}{[2k+1]_q}(1_{l-1} \otimes t_1 \otimes 1_{2k-l} \otimes t_1^* \otimes 1_1) \Big) (p_{2k} \otimes 1_2). 
\end{align*}
Multiplying this expression by $1_2 \otimes p_{2k}$ on the left, observing that $p_{2k+2} = (1_2 \otimes p_{2k})p_{2k+2}$ and $p_{2k}\text{TL}_{y, 2k}(\delta) = \text{TL}_{2k,y}(\delta)p_{2k} =  0$ for all $y < 2k$, we see that most terms in the above sums vanish.  That is, 

\begin{align*}
p_{2k+2} &=  (1_2 \otimes p_{2k})\Big(1_{2k+2} -\frac{[2]_q}{[2k+2]_q}(t_1 \otimes 1_{2k} \otimes t_1^*) + \frac{[2]_q^2}{[2k+2]_q}(1_{1} \otimes t_1 \otimes 1_{2k-1} \otimes t_1^*)  \Big) \\
&\qquad\times \Big(1_{2k+2} - \sum_{l=1}^{2k} (-1)^{2k-L}\frac{[2]_q[l]_q}{[2k+1]_q}(1_{l-1} \otimes t_1 \otimes 1_{2k-L} \otimes t_1^* \otimes 1_1) \Big) (p_{2k} \otimes 1_2)\\
&=(1_2 \otimes p_{2k})\Bigg[1_{2k+2} + \frac{[2]_q}{[2k+1]_q}(t_1 \otimes 1_{2k-1} \otimes t_1^* \otimes 1_1) \\
&\qquad  -\frac{[2]_q^2}{[2k+1]_q}(1_{1} \otimes t_1 \otimes 1_{2k-2} \otimes t_1^* \otimes 1_1) -\frac{[2]_q}{[2k+2]_q}(t_1 \otimes 1_{2k} \otimes t_1^* )  \\
& \qquad+\frac{[2]_q[2k]_q}{[2k+1]_q[2k+2]_q}(t_1 \otimes 1_{2k-1} \otimes t_1^* \otimes 1_1) + \frac{[2]_q^2}{[2k+2]_q}(1_{1} \otimes t_1 \otimes 1_{2k-1} \otimes t_1^*) \\
&\qquad + \frac{[2]_q^3}{[2k+1]_q[2k+2]_q}((1_1 \otimes t_1 \otimes 1_1)t_1 \otimes 1_{2k-2} \otimes t_1^*(1_1 \otimes t_1^* \otimes 1_1))  \\
& \qquad - \frac{[2]_q^2[2k]_q}{[2k+1]_q[2k+2]_q}(1_{1} \otimes t_1 \otimes 1_{2k-2} \otimes t_1^* \otimes 1_1) \Bigg](p_{2k} \otimes 1_2). \\
\end{align*}
After collecting like terms, this gives
\begin{align*}
p_{2k+2}&= (1_2 \otimes p_{2k})\Bigg[\underbrace{1_{2k+2}}_{(a)} + \Big(\frac{[2]_q}{[2k+1]_q} + \frac{[2]_q[2k]_q}{[2k+1]_q[2k+2]_q}\Big)\underbrace{(t_1 \otimes 1_{2k-1} \otimes t_1^* \otimes 1_1)}_{(b)} \\
& \qquad  - \Big(\frac{[2]_q^2}{[2k+1]_q} + \frac{[2]_q^2[2k]_q}{[2k+1]_q[2k+2]_q}\Big)\underbrace{(1_{1} \otimes t_1 \otimes 1_{2k-2} \otimes t_1^* \otimes 1_1)}_{(c)}  \\
& \qquad -\frac{[2]_q}{[2k+2]_q}\underbrace{(t_1 \otimes 1_{2k} \otimes t_1^* )}_{(d)}  + \frac{[2]_q^2}{[2k+2]_q} \underbrace{(1_{1} \otimes t_1 \otimes 1_{2k-1} \otimes t_1^*)}_{(e)} \\
&\qquad+ \frac{[2]_q^3}{[2k+1]_q[2k+2]_q}\underbrace{((1_1 \otimes t_1 \otimes 1_1)t_1 \otimes 1_{2k-2} \otimes t_1^*(1_1 \otimes t_1^* \otimes 1_1))}_{(f)} \Bigg](p_{2k} \otimes 1_2).
\end{align*}

If we now insert the above expression for $p_{2k+2}$ into equation \eqref{eqn_flip} and use the fact that $p_2t_1 = 0$, we see that the contributions coming from $(b), (d)$ and $(e)$ are all zero.  Thus,

\begin{align*}
&(\phi_{k,L}^{(+1)})^*\sigma\phi_{k,R}^{(+1)} \\
&= \frac{[3]_q[2k+1]_q}{[2k+3]_q} \Bigg[\underbrace{p_{2k}(t_2^* \otimes p_{2k})\sigma (1_{2k} \otimes t_2)p_{2k}}_{ :=\alpha \text{ [resulting from } (a)]} \\
& \qquad - \Big(\frac{[2]_q^2}{[2k+1]_q} + \frac{[2]_q^2[2k]_q}{[2k+1]_q[2k+2]_q}\Big)  \\
& \qquad \qquad \times \underbrace{p_{2k}(t_2^* \otimes p_{2k})\big(1_2 \otimes (1_{1} \otimes t_1 \otimes 1_{2k-2} \otimes t_1^* \otimes 1_1)\big)\sigma (1_{2k} \otimes t_2)p_{2k}}_{:=\beta \text{ [resulting from } (c)]} \\
& \qquad + \frac{[2]_q^3}{[2k+1]_q[2k+2]_q} \\
& \qquad \qquad \times \underbrace{p_{2k}(t_2^* \otimes p_{2k})\big(1_2 \otimes ((1_1 \otimes t_1 \otimes 1_1)t_1 \otimes 1_{2k-2} \otimes t_1^*(1_1 \otimes t_1^* \otimes 1_1))\big)\sigma (1_{2k} \otimes t_2)p_{2k}}_{:=\gamma  \text{ [resulting from } (f)]}  \Bigg] \\
&= \frac{[2k+1]_q}{[2k+3]_q} p_{2k} \sigma^* p_{2k} - \Bigg(\frac{1 + \frac{[2k]_q}{[2k+2]_q} }{[2k+3]_q}\Bigg) p_{2k}(1_{2k-2} \otimes m)\sigma^*(m^* \otimes 1_{2k-2})p_{2k} \\
&\qquad  + \frac{[2]_q}{[2k+3]_q[2k+2]_q}p_{2k}\sigma p_{2k},
\end{align*}
where, to obtain the last equality, we have substituted the following three identities for the quantities $\alpha, \beta$ and $\gamma$:
\begin{align*}
[3]_q \alpha &= p_{2k}\Big(\sum_{i=1}^{d_1} e_i^* \otimes (F_1e_i)^* \otimes p_{2k}\Big)\Big(\sum_{j=1}^{d_1} e_j \otimes p_{2k} \otimes F_1e_j\Big) \\
&= p_{2k}\sum_{i=1}^{d_1}( (F_1e_i)^* \otimes p_{2k})(p_{2k} \otimes F_1e_i) =  p_{2k}\sigma^*p_{2k}, \\
[2]_q^2[3]_q \beta  &=[3]_q p_{2k}(t_2^* \otimes p_{2k})\big(1_2 \otimes m^*\otimes 1_{2k-4} \otimes m \big)\sigma (1_{2k} \otimes t_2)p_{2k} \qquad \text{(using \eqref{oddt})} \\
& = p_{2k}\Big(\sum_{i=1}^{d_1} e_i^* \otimes (F_1e_i)^* \otimes p_{2k}\Big)\big(1_2 \otimes m^*\otimes 1_{2k-4} \otimes m \big)\Big(\sum_{j=1}^{d_1} e_j \otimes p_{2k} \otimes F_1e_j\Big) \\
& = p_{2k}\sum_{i=1}^{d_1}( (F_1e_i)^* \otimes p_{2k})(1_{2k} \otimes m)((m^* \otimes 1_{2k-2})p_{2k} \otimes F_1e_i) \\
& = p_{2k}(1_{2k-2} \otimes m)\sum_{i=1}^{d_1}( (F_1e_i)^* \otimes 1_{2k+2})((m^* \otimes 1_{2k-2})p_{2k} \otimes F_1e_i) \\
&=p_{2k}(1_{2k-2} \otimes m)\sigma^*(m^* \otimes 1_{2k-2})p_{2k},
\end{align*}
and 
\begin{align*}
[3]_q[2]_q^2 \gamma &= [3]_q^2 p_{2k}(t_2^* \otimes p_{2k})\big(1_2 \otimes t_2 \otimes 1_{2k-2} \otimes t_2^*\big)\sigma (1_{2k} \otimes t_2)p_{2k} \\
& \qquad \qquad \text{(using the fact that $t_2 = [3]_q^{-1/2}[2]_q(p_2 \otimes p_2)(1_1 \otimes t_1\otimes 1_1)t_1$)}\\
&= [3]_q^2 p_{2k}\big((t_2^* \otimes 1_{2})(1_2 \otimes t_2) \otimes 1_{2k-2} \otimes t_2^*\big)\sigma (1_{2k} \otimes t_2)p_{2k} \\
&=[3]_q p_{2k}(p_2 \otimes 1_{2k-2}\otimes t_{2}^*)\sigma (1_{2k} \otimes t_2)p_{2k} \qquad \text{(since $(t_2^* \otimes 1_{2})(1_2 \otimes t_2) = [3]_q^{-1}p_2$)}\\
&=p_{2k}\Big(p_2 \otimes 1_{2k-2}\otimes \sum_{i=1}^{d_1}e_i^* \otimes (F_1e_i)^*\Big)\Big(\sum_{j=1}^{d_1}e_j \otimes p_{2k} \otimes F_1e_j\Big) \\
&=p_{2k}(p_{2} \otimes 1_{2k-2})\sum_{i=1}^{d_1}(1_{2k}\otimes e_i^*)(e_i \otimes p_{2k}) = p_{2k}\sigma p_{2k}. \\
\end{align*}
A simple application of the triangle inequality using the fact that $\|m\| = \delta = [2]_q$ now yields our desired norm estimate. 
\begin{lem} \label{lem:lowbd} For each $k \ge 1$,  
\begin{align}
\|(\phi_{k,L}^{(+1)})^*\sigma\phi_{k,R}^{(+1)}\| \le  \frac{[2k+1]_q + [2]_q^2\Big(1 + \frac{[2k]_q}{[2k+2]_q} \Big) +  \frac{[2]_q}{[2k+2]_q}}{[2k+3]_q}. 
\end{align}
\end{lem}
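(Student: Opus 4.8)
The plan is to read the estimate off directly from the closed-form expression for $(\phi_{k,L}^{(+1)})^*\sigma\phi_{k,R}^{(+1)}$ established in the paragraphs immediately above, namely
\begin{align*}
(\phi_{k,L}^{(+1)})^*\sigma\phi_{k,R}^{(+1)} &= \frac{[2k+1]_q}{[2k+3]_q}\, p_{2k} \sigma^* p_{2k} - \Bigg(\frac{1 + \frac{[2k]_q}{[2k+2]_q}}{[2k+3]_q}\Bigg) p_{2k}(1_{2k-2} \otimes m)\sigma^*(m^* \otimes 1_{2k-2})p_{2k} \\
&\qquad + \frac{[2]_q}{[2k+3]_q[2k+2]_q}\, p_{2k}\sigma p_{2k}.
\end{align*}
Since all of the delicate diagrammatic bookkeeping (the substitutions for $\alpha$, $\beta$, $\gamma$ and the collection of like terms) has already been carried out, the remaining task is purely one of estimating operator norms, and I would handle it by a single application of the triangle inequality followed by term-by-term bounds.

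First I would bound the two outer summands. As $p_{2k}$ is an orthogonal projection and the tensor flips $\sigma,\sigma^*$ are unitaries, both $p_{2k}\sigma^* p_{2k}$ and $p_{2k}\sigma p_{2k}$ have norm at most $1$. Hence these terms contribute at most $\frac{[2k+1]_q}{[2k+3]_q}$ and $\frac{[2]_q}{[2k+3]_q[2k+2]_q}$ respectively, matching the first and last parts of the claimed numerator.

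Next I would treat the middle summand, the only place where the multiplication map enters. Grouping it as a product of contractions and flips with two copies of $m$, I would estimate
\[
\big\|p_{2k}(1_{2k-2} \otimes m)\sigma^*(m^* \otimes 1_{2k-2})p_{2k}\big\| \le \|1_{2k-2} \otimes m\|\,\|\sigma^*\|\,\|m^* \otimes 1_{2k-2}\| = \delta^2 = [2]_q^2,
\]
using $\|m\| = \|m^*\| = \delta$ (which follows from $mm^* = \delta^2\id_B$ in Definition \ref{defn_delta_form}), the unitarity of $\sigma^*$, the fact that tensoring with an identity does not increase the norm, and that each $p_{2k}$ is a contraction. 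Multiplying by the positive coefficient then bounds this term by $\frac{[2]_q^2\big(1 + \frac{[2k]_q}{[2k+2]_q}\big)}{[2k+3]_q}$.

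Summing the three contributions produces exactly the right-hand side of the stated inequality. I do not anticipate any genuine obstacle at this stage: the only mild point to keep in mind is that the middle term must be grouped so that every factor is visibly a contraction or a scalar multiple of $m$ (resp.\ $m^*$), making the bound $[2]_q^2$ transparent. The heavy lifting was the derivation of the three-term formula preceding the lemma, and once that is in hand the estimate is routine.
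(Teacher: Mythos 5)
Your proposal is correct and follows exactly the paper's own route: the paper derives the same three-term formula and then concludes the lemma with ``a simple application of the triangle inequality using the fact that $\|m\| = \delta = [2]_q$,'' which is precisely the term-by-term estimate you spell out. Your filled-in details (norm $\le 1$ for $p_{2k}\sigma^{*}p_{2k}$ and $p_{2k}\sigma p_{2k}$, and the bound $[2]_q^2$ for the middle term via $\|m\| = \|m^*\| = \delta$) are exactly what the paper leaves implicit.
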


Returning to inequality \eqref{eqn:lowbd} with Lemma \ref{lem:lowbd} now at hand, we obtain the lower bound

\begin{align*}\|T^{(+1)}\xi^0\|^2 &\ge 2 \|\xi^0\|^2 \min_{k \ge 1} \Bigg\{ \frac{[2k+3]_q}{[2k+1]_q}- \frac{\Big([2k+1]_q + [2]_q^2\Big(1 + \frac{[2k]_q}{[2k+2]_q} \Big) +  \frac{[2]_q}{[2k+2]_q}\Big)^2}{[2k+3]_q[2k+1]_q}  \Bigg\}.
\end{align*}\
By expanding the square in the previous line and using the fact that   $\min_k \frac{[2k+3]_q}{[2k+1]_q} = q^{-2}$, $\max_k \frac{[2k+1]_q}{[2k+3]_q} = q^2$ and that $y \mapsto [y]_q$ is increasing, we finally obtain
\begin{align*}
\|T^{(+1)}\xi^0\|^2 &\ge 2 \Bigg(q^{-2} - q^2 -  \frac{ [2]_q^4(1 + q^{2})^2}{[3]_q[5]_q} - \frac{[2]_q^2}{[4]_q^2[3]_q[5]_q}  -  \frac{2[2]_q^2(1 + q^2)}{[5]_q}  \\
&- \frac{2[2]_q}{[4]_q[5]_q}  - \frac{2[2]_q^3(1+q^2)}{[4]_q[3]_q[5]_q}\Bigg) \|\xi^0\|^2 =: C(q) \|\xi^0\|^2,   \end{align*}   

If we now combine the preceding inequality with Lemma \ref{lem:upper_bound}, we have
\begin{align} \|T\xi\| &= \|T^{(+1)}\xi^0 + (T^{(0)} + T^{(-1)})\xi^0\| \ge \|T^{(+1)}\xi^0\| - \|(T^{(0)} + T^{(-1)})\xi^0\| \notag \\
\label{eqn:lowerbound} &\ge \big(C(q)^{1/2} - 2(1+q)\big)\|\xi^0\| \qquad (\xi \in L^2(\G)).\end{align}

We are now ready to prove Theorem \ref{thm:factoriality_tracial}.  The remaining ingredient we need is the following lemma, which will also be useful when studying the simplicity of $C_r(\G)$ in Section \ref{section:simplicity}.  Recall that $0 < q < 1$ was defined so that $\delta = \sqrt{\dim B}  = q+q^{-1}$.  Note that $q = q(\delta) = \frac{\delta - \sqrt{\delta^2-4}}{2}$ is a decreasing function of $\delta$.  

\begin{lem} \label{lem:increasing} Consider the function $f(\delta) := [3]_{q(\delta)}^{-1/2}(C(q(\delta))^{1/2} - 2(1+q(\delta)))$.  Then $f$ is an increasing function on the interval $[\sqrt{8},\infty)$ and $f(\sqrt{8}) > 0$.   
\end{lem}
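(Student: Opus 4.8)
The plan is to pass to the variable $q$ and reduce everything to elementary monotonicity statements plus one endpoint evaluation. The map $\delta \mapsto q(\delta) = \tfrac{1}{2}(\delta - \sqrt{\delta^2-4})$ is a strictly decreasing smooth bijection of $[\sqrt 8,\infty)$ onto $(0,\sqrt 2 - 1]$ with $q(\sqrt 8) = \sqrt 2 - 1$, so ``$f$ is increasing in $\delta$'' is equivalent to ``$f$ is decreasing in $q$ on $(0,\sqrt 2-1]$'', and $f(\sqrt 8) > 0$ is the single inequality $f|_{q = \sqrt 2 - 1} > 0$. First I would rewrite all data as rational functions of $q$: each $[a]_q$ is a Laurent polynomial in $q$, the identity $1 + q^2 = q[2]_q$ collapses the numerators appearing in $C$ (e.g.\ $[2]_q^4(1+q^2)^2 = q^2[2]_q^6$ and $[2]_q^2(1+q^2) = q[2]_q^3$), and $[3]_q = \delta^2 - 1$. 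After this substitution $C(q)$ is an explicit rational function of $q$ carrying no surds, and $f(q) = [3]_q^{-1/2}\big(C(q)^{1/2} - 2(1+q)\big)$ becomes an explicit elementary function.

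For the endpoint positivity I would evaluate at $q = \sqrt 2 - 1$, i.e.\ $\delta = \sqrt 8$, where $[2]_q = \sqrt 8$, $[3]_q = 7$, $[4]_q = \delta(\delta^2-2) = 6\sqrt 8$, $[5]_q = \delta^4 - 3\delta^2 + 1 = 41$, and $q^{-2} - q^2 = \delta\sqrt{\delta^2-4} = 4\sqrt 2$. Substituting these into the formula for $C$ gives a concrete value $C(\sqrt 8) \approx 9.75$, so $C(\sqrt 8)^{1/2} \approx 3.12 > 2\sqrt 2 = 2\big(1 + (\sqrt 2 - 1)\big)$, which yields $f(\sqrt 8) > 0$. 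This is a finite self-contained computation.

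For monotonicity I would write $f = \sqrt{h} - g$, where $h(\delta) := C(q)/(\delta^2 - 1) = C(q)/[3]_q$ and $g(\delta) := 2(1+q)/\sqrt{\delta^2 - 1}$. The factor $g$ is plainly decreasing in $\delta$ (both $1 + q(\delta)$ and $(\delta^2-1)^{-1/2}$ decrease), so $-g$ is increasing; hence it suffices to prove that $h$ is increasing and positive. Using $q^{-2} - q^2 = \delta\sqrt{\delta^2 - 4}$ I decompose $h = u_1 - u_2$ with $u_1(\delta) = \tfrac{2\delta\sqrt{\delta^2-4}}{\delta^2-1}$ and $u_2(\delta) = \tfrac{2A(q)}{\delta^2-1}$, where $A$ denotes the sum of the five correction terms in the formula for $C$. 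A direct differentiation gives $u_1'(\delta) = \tfrac{2\delta^2 + 4}{(\delta^2-4)^{1/2}(\delta^2-1)^2} > 0$ for $\delta > 2$, so $u_1$ is increasing. Everything then reduces to showing that $u_2$ is decreasing in $\delta$, equivalently that the rational function $A(q)/[3]_q$ is increasing in $q$ on $(0,\sqrt 2 - 1]$; combined with $h(\sqrt 8) = C(\sqrt 8)/7 > 0$ from the endpoint computation, this gives $h > 0$ throughout, $\sqrt h$ increasing, and finally $f = \sqrt h - g$ increasing.

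The one genuinely tedious point --- and the main obstacle --- is the monotonicity of $A(q)/[3]_q$. This is a sum of five rational functions of $q$ whose numerators and denominators each mix increasing factors (powers of $q$, and $1 + q^2 = q[2]_q$) with decreasing ones (each $[a]_q$ is decreasing on $(0,1)$), so no individual summand is manifestly monotone. I would dispatch it by clearing denominators: $\tfrac{d}{dq}\big(A(q)/[3]_q\big) = N(q)/M(q)$, where the common denominator gives $M(q) = [3]_q^4[4]_q^4[5]_q^2$, a perfect square and hence positive on $(0,1)$, so the sign of the derivative is that of the explicit Laurent polynomial $N(q)$, and it remains only to check $N(q) \ge 0$ on $(0,\sqrt 2 - 1]$. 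Since $A$ is uniformly small (it vanishes as $\delta \to \infty$ and equals $\approx 0.78$ at the endpoint), I expect the verification of $N \ge 0$ to be routine --- either by recording the signs of the coefficients of $N$ after the shift $q \mapsto (\sqrt 2 - 1) - s$, or, more crudely, by bounding the decreasing contributions to $h'$ against the strictly positive gap furnished by $u_1' > 0$ so as to conclude $h' > 0$ directly without fully expanding $N$.
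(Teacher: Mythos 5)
Your proposal is correct and follows essentially the same route as the paper's proof: both pass to the decreasing variable $q(\delta)$, verify the endpoint positivity by direct substitution at $q=\sqrt{2}-1$ (your values $[2]_q=\sqrt{8}$, $[3]_q=7$, $[4]_q=6\sqrt{8}$, $[5]_q=41$, $C(\sqrt{8})\approx 9.75$ all check out and reproduce the paper's $f(\sqrt{8})\approx 0.111$), and then convert the claim that $f$ increases in $\delta$ into a monotonicity statement in $q$. Where the paper simply declares the $q$-monotonicity to be ``an elementary calculus exercise,'' your decomposition $f=\sqrt{u_1-u_2}-g$ with $u_1'>0$ computed explicitly and the one residual claim (that $A(q)/[3]_q$ increases on $(0,\sqrt{2}-1]$) cleanly isolated actually carries out more of that exercise than the paper does, and leaving that final routine sign check at the level of a sketch is no less complete than the paper's own treatment.
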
 

\begin{proof}
From the definition of $C(q)$, it follows that $f(\delta) = g(q(\delta))$ where \begin{align*}
g(q) &= \sqrt{2}\Bigg[\frac{q^{-2}}{[3]_q} - \frac{q^2}{[3]_q} -  \frac{ [2]_q^4(1 + q^{2})^2}{[3]_q^2[5]_q} - \frac{[2]_q^2}{[4]_q^2[3]_q^2[5]_q}  -  \frac{2[2]_q^2(1 + q^2)}{[3]_q[5]_q}  \\
&\qquad - \frac{2[2]_q}{[3]_q[4]_q[5]_q}  - \frac{2[2]_q^3(1+q^2)}{[4]_q[3]_q^2[5]_q}\Bigg]^{1/2} - \frac{2(1+q)}{[3]_q^{1/2}}.
\end{align*}
When $\delta = \sqrt{8}$, $q = \frac{\sqrt{8} - 2}{2} \approx 0.4142$ and direct substitution yields $f(\delta) \approx 0.1111 >0.$  It is now just an elementary calculus exercise to verify that $g$ is a decreasing function of $q \in (0, \frac{\sqrt{8} - 2}{2}]$ and therefore $f$ is an increasing function of $\delta \in [\sqrt{8}, \infty)$. 
\end{proof}

\begin{proof}[Proof of Theorem \ref{thm:factoriality_tracial}]
Since $f(\sqrt{8}) > 0$ and $f$ is increasing by Lemma \ref{lem:increasing}, inequality \eqref{eqn:lowerbound} shows that $T|_{\C\xi_0^\perp}$ is bounded below by $[3]_{q(\delta)}^{1/2}f(\delta) > 0$ whenever $\dim B = \delta^2 \ge 8$.  Thus $L^\infty(\G)$ is a full factor by Remark \ref{rem:bbelow}. 
\end{proof}

\begin{rem} \label{rem:failure}  Note that if $\delta^2 = 5,6$ or $7$, one can check that $[3]_{q(\delta)}^{1/2}f(\delta) \le -0.4386 < 0$.  So the preceding calculations yield no information about the factoriality or fullness of $L^\infty(\G)$ when $5 \le \dim B \le 7$. 
\end{rem}

\subsection{Simplicity of $C_r(\G)$} \label{section:simplicity}

We now consider the reduced C$^\ast$-algebra $C_r(\G)$ and show that, at least in most cases, it is simple with unique trace.  To do this, we adapt the ``conjugation by generators'' method used in \cite{VaVe}, Section 7.  

Write $W^1 = (\id \otimes \lambda)U^1$ where $U^1$ is the  irreducible representation of $\G$ labeled by $1 \in \N$ and consider the unital completely positive map $\Phi:C_r(\G) \to C_r(\G)$ defined by  
\begin{align*}
\Phi(x) = \frac{1}{2[3]_q}(\text{Tr} \otimes \id) \Big((W^1)^*(1 \otimes x)W^1 + W^1(1 \otimes x)(W^1)^* \Big) \qquad (x\in C_r(\G)).
\end{align*}    
Observe that $\Phi$ is $\tau$-preserving for any tracial state $\tau$ on $C_r(\G)$ and that $\Phi(\mc I) \subseteq \mc I$ for any closed two-sided ideal $\mc I \subseteq C_r(\G)$.  Moreover, a simple calculation shows that the $L^2$-extension of $\Phi$, denoted by $\hat{\Phi}$, is given by
\begin{align} \label{eqn:Phi_T} \hat\Phi\Lambda_h(a)  = \Big(\id_{L^2(\G)} - \frac{1}{2[3]_q}T^*T\Big) \Lambda_h(a) \qquad (a \in C_u(\G)), 
\end{align}   
where $T:L^2(\G) \to H_1 \otimes L^2(\G) \otimes H_1$ is the commutator map introduced in Section \ref{section:factoriality}. 
The key result of this section is the following proposition.

\begin{prop} \label{prop:converge_to_trace}
Let $\G = \G_{\text{aut}}(B,\psi)$, where $B$ is a finite dimensional C$^\ast$-algebra with $\delta$-trace $\psi$.  If $\dim B \ge 8$, then \[\lim_{n \to \infty} \Phi^n(x) = h(x)1_{C_r(\G)} \qquad (x \in C_r(\G)).\]
\end{prop}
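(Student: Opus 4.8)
The plan is to study the self-adjoint $L^2$-operator $\hat\Phi$ spectrally and then convert $L^2$-decay into norm convergence via the property of rapid decay. Since $\Phi$ is unital, completely positive and $h$-preserving, its $L^2$-extension $\hat\Phi$ is a self-adjoint contraction on $L^2(\G)$, and by \eqref{eqn:Phi_T} it equals $\id - \tfrac{1}{2[3]_q}T^*T$ with $\hat\Phi\xi_0 = \xi_0$. Writing $T = A - B$, where $A$ and $B$ denote the left- and right-multiplication summands in the definition of $T$ (so that $A$ uses $\lambda(u^1_{ij})$ and $B$ uses the commuting right action), a short computation with the Schur orthogonality relations \eqref{eqn:Haar} and the unitarity of $U^1$ shows $A^*A = B^*B = [3]_q\,\id_{L^2(\G)}$ (recall $d_1 = \dim B - 1 = [3]_q$). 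The whole statement reduces to establishing a two-sided spectral gap $\|\hat\Phi|_{\C\xi_0^\perp}\| = \rho < 1$, for then $\hat\Phi$ preserves $\C\xi_0^\perp$ and $\|\hat\Phi^n\Lambda_h(x)\|_2\le\rho^n\|\Lambda_h(x)\|_2$ whenever $h(x)=0$.

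First I would record the \emph{top} gap, which is exactly the content of the factoriality estimate. Since $\dim B\ge 8$, inequality \eqref{eqn:lowerbound} together with Lemma \ref{lem:increasing} gives $\|T\xi\|\ge[3]_q^{1/2}f(\delta)\|\xi\|$ for $\xi\in\C\xi_0^\perp$, with $f(\delta)>0$; equivalently $T^*T\ge[3]_q f(\delta)^2$ on $\C\xi_0^\perp$, so that $\hat\Phi\le\big(1-\tfrac12 f(\delta)^2\big)\id$ there. In particular $\ker(\id-\hat\Phi)=\ker T=\C\xi_0$, as in Remark \ref{rem:bbelow}.

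The main obstacle is the \emph{bottom} gap, i.e. ruling out spectral mass of $\hat\Phi$ near $-1$; complete positivity alone only yields $\hat\Phi\ge -\id$. Here the key observation is a parallelogram identity: setting $\tilde T := A + B$ and using $A^*A = B^*B = [3]_q\,\id$, one gets $T^*T + \tilde T^*\tilde T = 4[3]_q\,\id$, whence $\hat\Phi = -\id + \tfrac{1}{2[3]_q}\tilde T^*\tilde T$. Thus the bottom gap is equivalent to a lower bound for $\tilde T = A + B$. I claim the decomposition of Theorem \ref{thm:decomp_T} and the estimates of Lemmas \ref{lem:upper_bound} and \ref{lem:lowbd} apply to $\tilde T$ verbatim, the only change being that each difference $\phi^{(\alpha)}_{k,L}\otimes\phi^{(\alpha)}_{k,R} - \sigma\phi^{(\alpha)}_{k,R}\otimes\sigma^*\phi^{(\alpha)}_{k,L}$ is replaced by the corresponding \emph{sum} (the $A$-part contributing $\phi_L\otimes\phi_R$ and the $B$-part the $\sigma\phi_R\otimes\sigma^*\phi_L$ term). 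Indeed, all the bounds in those lemmas enter the relevant inner products only through their absolute values $\|(\phi^{(+1)}_{k,L})^*\sigma\phi^{(+1)}_{k,R}\|$, and so are insensitive to this sign. Consequently $\|\tilde T\xi\|\ge[3]_q^{1/2}f(\delta)\|\xi\|$ now holds for \emph{all} $\xi\in L^2(\G)$ (the level-$0$ vector $\xi_0$, on which $T$ vanishes but $\tilde T$ does not, only improves the estimate, since $\|\tilde T\xi_0\|^2=4[3]_q$). Hence $\tilde T^*\tilde T\ge[3]_q f(\delta)^2$ and $\hat\Phi\ge\big(-1+\tfrac12 f(\delta)^2\big)\id$. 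Combining the two gaps yields $\|\hat\Phi|_{\C\xi_0^\perp}\|\le\rho := 1-\tfrac12 f(\delta)^2 < 1$.

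Finally I would upgrade this geometric $L^2$-decay to norm convergence using Theorem \ref{thm:RD}. By density and the contractivity of the maps $\Phi^n$ it suffices to treat $x\in C_{\text{alg}}(\G)$ with $h(x)=0$ (replacing $x$ by $x-h(x)1$). Such an $x$ has Fourier support in levels $k\le K_0$ for some $K_0$; since $T$ is block-tridiagonal by Theorem \ref{thm:decomp_T}, $T^*T$ and hence $\hat\Phi$ shift the grading $L^2(\G)=\bigoplus_k L^2_k(\G)$ by at most two blocks, so $\Phi^n(x)$ is supported in levels $k\le K_0+2n$. Invoking the bound $\|P_k\|_{L^2\to L^\infty}\le D(2k+1)$ from Theorem \ref{thm:RD} followed by Cauchy--Schwarz,
\begin{align*}
\|\Phi^n(x)\|_{L^\infty(\G)} &\le \sum_{k\le K_0+2n}D(2k+1)\,\|P_k\hat\Phi^n\Lambda_h(x)\|_2 \\
&\le D\Big(\sum_{k\le K_0+2n}(2k+1)^2\Big)^{1/2}\|\hat\Phi^n\Lambda_h(x)\|_2 = O(n^{3/2})\,\rho^n\|\Lambda_h(x)\|_2 \longrightarrow 0.
\end{align*}
A standard $\varepsilon/2$-argument using $\|\Phi^n\|\le 1$ and $|h(x)-h(x')|\le\|x-x'\|$ then gives $\Phi^n(x)\to h(x)1$ in norm for every $x\in C_r(\G)$. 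The delicate point throughout is the bottom gap of the third paragraph; once the sign-insensitivity of the factoriality estimates is recognised, the remaining steps are routine.
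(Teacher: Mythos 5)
Your proof is correct, and its skeleton is exactly the paper's: reduce to $x\in C_{\text{alg}}(\G)$ by density and contractivity, extract a spectral gap for $\hat\Phi$ on $\C\xi_0^\perp$ from the factoriality estimates on $T$, and use property RD plus the linear growth of Fourier supports to beat the polynomial factor $O(n^{3/2})$ by $\rho^n$. The genuine difference is your third paragraph, and it is a substantive improvement. The paper passes directly from $T^*T|_{\C\xi_0^\perp}\ge[3]_q f(\delta)^2\,\id$ to the norm bound $\|\hat\Phi|_{\C\xi_0^\perp}\|\le 1-\tfrac12 f(\sqrt8)^2<1$; but since $\hat\Phi=\id-\tfrac{1}{2[3]_q}T^*T$ is self-adjoint, that operator inequality only controls the \emph{top} of its spectrum, and complete positivity alone gives merely $\hat\Phi\ge-\id$ (the $L^2$-extension of a trace-preserving unital completely positive map need not be a positive operator: think of $x\mapsto uxu^*$ with $u$ a self-adjoint unitary). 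Your parallelogram identity $T^*T+\tilde T^*\tilde T=4[3]_q\,\id$ for $\tilde T=T_L+T_R$, combined with the observation that the estimates behind Theorem \ref{thm:decomp_T} and Lemmas \ref{lem:upper_bound} and \ref{lem:lowbd} are insensitive to the relative sign (the cross terms are bounded in absolute value through $\|(\phi^{(+1)}_{k,L})^*\sigma\phi^{(+1)}_{k,R}\|$ by Cauchy--Schwarz either way), supplies the missing \emph{bottom} bound $\hat\Phi\ge(-1+\tfrac12 f(\delta)^2)\id$ and hence a genuine two-sided gap; so your argument proves a step that the paper's own proof leaves implicit at best. Two small points to tighten: only the unitarity of $U^1$ (not the Schur orthogonality relations) is needed for $T_L^*T_L=T_R^*T_R=[3]_q\,\id$; and the parenthetical claim that the vector $\xi_0$ ``only improves'' the lower bound for $\tilde T$ deserves a sentence, namely that $\tilde T^{(+1)}$ maps the levels $L^2_k(\G)$ into mutually orthogonal blocks $H_1\otimes L^2_{k+1}(\G)\otimes H_1$ while $\tilde T^{(0)}$ and $\tilde T^{(-1)}$ annihilate $\xi_0$, so the blockwise estimates still sum, using $\|\tilde T^{(+1)}\xi_0\|^2=4[3]_q\ge C(q)$. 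Finally, it is worth noting that if one only wants Corollary \ref{cor:simplicity}, the bottom gap can be sidestepped more cheaply by replacing $\Phi$ with $\tfrac12(\id+\Phi)$, whose $L^2$-extension is automatically positive; your route has the advantage of proving the statement for $\Phi$ itself.
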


\begin{proof}
First observe that it is sufficient to prove this result for  $x_0$ belonging to the dense Hopf $\ast$-subalgebra $\lambda(C_{\text{alg}}(\G)) \subset C_r(\G)$.  To verify this claim, assume the above proposition is true for all $x_0 \in \lambda(C_{\text{alg}}(\G))$.  Let  $x \in C_r(\G)$ and $\epsilon >0$ be arbitrary, and choose $a \in C_{\text{alg}}(\G)$ such that $x_0 = \lambda(a)$ satisfies $\|x-x_0\| \le \epsilon$.  If we apply the triangle inequality to the norm of $\Phi^n(x) - h(x)1$, we obtain 
\begin{align*}
&\limsup_{n \to \infty} \|\Phi^n(x) - h(x)1\| \\
&\le \limsup_{n \to \infty} \big( \|\Phi^n(x-x_0)\| + \|\Phi^n(x_0)-h(x_0)1\| + |h(x_0-x)|\big) \le 2 \epsilon. \end{align*}   
This proves the claim, since $x$ and $\epsilon$ were arbitrary.

Now take $a \in C_{\text{alg}}(\G)$ and consider $x_0 = \lambda(a)$.  Since $a \in  C_{\text{alg}}(\G)$, there is an $r \ge 0$ such that $x_0 \in \bigoplus_{k=0}^r \lambda(C_{k}(\G))$, where $C_{k}(\G)$ is the subspace of  $C_{\text{alg}}(\G)$ spanned by the matrix elements of the irreducible representation $U^k$.  Now let $n \in \N$ and consider $\Phi^n(x_0) - h(x_0)1 = \Phi^n(x_0-h(x_0)1)$, which evidently belongs to $\bigoplus_{k=1}^{r+2n} \lambda(C_{k}(\G))$.  Writing $\Phi^n(x_0-h(x_0)1) = \sum_{k=1}^{r+2n} \lambda(z_k)$ with $z_k \in C_{k}(\G)$ and using property RD (Theorem \ref{thm:RD}), we obtain   
\begin{align*}
&\|\Phi^n(x_0) - h(x_0)1\|_{C_r(\G)} \le \sum_{k=1}^{r+2n}\|\lambda(z_k)\|_{C_r(\G)} \le\sum_{k=1}^{r+2n} (2k+1)\|\Lambda_h(z_k)\|_{L^2(\G)} \\
&\le (2r+4n+1)\sqrt{r+2n}\Big(\sum_{k=1}^{r+2n}\|\Lambda_h(z_k)\|_{L^2(\G)}^2\Big)^{1/2} \\
&= (2r+4n+1)\sqrt{r+2n} \|\hat\Phi^n(\Lambda_h(a-h(a)1))\|_{L^2(\G)} \\
& \le (2r+4n+1)\sqrt{r+2n} \|\hat\Phi|_{\C\xi_0^\perp}\|^n \|\Lambda_h(a-h(a)1)\|_{L^2(\G)}. 
\end{align*}

Now assume that $\delta = \sqrt{\dim B} \ge \sqrt{8}$.  From \eqref{eqn:lowerbound} and Lemma \ref{lem:increasing}, it follows that $T^*T|_{\C\xi_0^\perp} \ge [3]_qf(\delta)^2\id_{\C\xi_0^\perp}$ and therefore \[ \hat \Phi|_{\C\xi_0^\perp} = \id_{\C\xi_0^\perp} - \frac{1}{2[3]_q}T^*T|_{\C\xi_0^\perp} \le \Big(1 -\frac{1}{2}f(\delta)^2\Big)\id_{\C\xi_0^\perp}  \le \Big(1 -\frac{1}{2}f(\sqrt{8})^2\Big)\id_{\C\xi_0^\perp}.  \] 
Since $f(\sqrt{8}) \approx 0.1111$, $\| \hat \Phi|_{\C\xi_0^\perp}\| \le \Big(1 -\frac{1}{2}f(\sqrt{8})^2\Big) < 1$, which gives \begin{align*}
&\limsup_{n \to \infty}\|\Phi^n(x_0) - h(x_0)1\|_{C_r(\G)} \\
&\le \lim_{n\to \infty} (2r+4n+1)\sqrt{r+2n} \Big(1 -\frac{1}{2}f(\sqrt{8})^2\Big)^n \|\Lambda_h(a-h(a)1)\|_{L^2(\G)} = 0.
\end{align*} 
\end{proof}

Simplicity and uniqueness of trace for $C_r(\G)$ are now immediate consequences (as long as $\dim B \ge 8$).

\begin{cor} \label{cor:simplicity}
Let $\psi$ be the $\delta$-trace on a finite dimensional $C^\ast$-algebra $B$ and let $\G = \G_{\text{aut}}(B, \psi)$.  If $\dim B \ge 8$, then $C_r(\G)$ is simple with unique trace.
\end{cor}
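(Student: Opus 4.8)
The plan is to derive simplicity and uniqueness of trace for $C_r(\G)$ directly from Proposition \ref{prop:converge_to_trace}, which provides the crucial fact that the averaging map $\Phi$ drives every element of $C_r(\G)$ to a scalar multiple of the identity under iteration, convergent in norm. The two desired conclusions will follow from two standard but complementary consequences of this convergence.

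For \emph{uniqueness of trace}, I would let $\tau$ be any tracial state on $C_r(\G)$. Since $\Phi$ is $\tau$-preserving (this was observed right after the definition of $\Phi$: it is built symmetrically from $W^1$ and $(W^1)^*$ via the trace, so $\tau \circ \Phi = \tau$), we have $\tau(\Phi^n(x)) = \tau(x)$ for all $n$ and all $x \in C_r(\G)$. Applying $\tau$ to the limit from Proposition \ref{prop:converge_to_trace} and using norm-continuity of $\tau$ gives $\tau(x) = \lim_{n\to\infty}\tau(\Phi^n(x)) = \tau(h(x)1) = h(x)$. Hence every tracial state coincides with the Haar state $h$, proving uniqueness of trace.

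For \emph{simplicity}, I would let $\mc I \subseteq C_r(\G)$ be any nonzero closed two-sided ideal and show $\mc I = C_r(\G)$. The key structural property, also noted after the definition of $\Phi$, is that $\Phi(\mc I) \subseteq \mc I$: conjugating an element of the ideal by the unitary-type operators coming from $W^1$ keeps it in the ideal, and $\mc I$ is closed. Pick a nonzero self-adjoint $x \in \mc I$; after rescaling we may assume $\|x\|$ is controlled, and since $h$ is faithful on $C_r(\G)$ we can arrange $h(x) \neq 0$ (for instance by replacing $x$ with $x^*x \in \mc I$, which is positive and nonzero, so $h(x^*x) > 0$ by faithfulness of the Haar state). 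Then $\Phi^n(x^*x) \in \mc I$ for every $n$, and Proposition \ref{prop:converge_to_trace} gives $\Phi^n(x^*x) \to h(x^*x)\,1_{C_r(\G)}$ in norm. Since $\mc I$ is norm-closed, the limit $h(x^*x)\,1 \in \mc I$, and as $h(x^*x) \neq 0$ this forces $1_{C_r(\G)} \in \mc I$, whence $\mc I = C_r(\G)$.

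The main (indeed only) obstacle is already resolved by Proposition \ref{prop:converge_to_trace}, which is the substantive input; the work here is purely the packaging. The two points requiring a moment's care are the $\Phi$-invariance of ideals and the trace-preservation of $\Phi$, but both are immediate from the explicit symmetric formula defining $\Phi$ together with the fact that $W^1$ is a unitary (so the two conjugation terms map ideals to ideals and preserve any trace). I would therefore present the argument compactly, invoking Proposition \ref{prop:converge_to_trace} as the engine and deducing both conclusions in a few lines.
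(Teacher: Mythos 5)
Your proposal is correct and follows essentially the same argument as the paper: both use Proposition \ref{prop:converge_to_trace} as the engine, deriving simplicity from $\Phi$-invariance of closed ideals plus norm-convergence of $\Phi^n$ to the Haar state, and uniqueness of trace from $\tau\circ\Phi=\tau$. If anything, your version is slightly more careful than the paper's, since you justify the existence of an ideal element with nonzero Haar state (by passing to $x^*x$ and invoking faithfulness of $h$), a point the paper's proof takes for granted.
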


\begin{proof}
Let $\mc I \subseteq C_r(\G)$ be a non-zero closed two-sided ideal and let $a \in \mc I$ be such that $h(a) \ne 0$.  Then by Proposition
\ref{prop:converge_to_trace}, $h(a)1 = \lim_{n \to \infty} \Phi^n(a) \in \mc I$.  Therefore $\mc I = C_r(\G)$ and $C_r(\G)$ is simple.

Now let $\tau$ be any tracial state on $C_r(\G)$.  Since $\tau \circ \Phi^n = \tau$ for all $n$,   we have $\tau(x) = \lim_{n \to \infty} \tau(\Phi^n(x)) = \tau(h(x)1) = h(x)$ for all $x \in C_r(\G)$. 
\end{proof}

\subsection{Solidity} \label{section:AO} In this final section we  show that the von Neumman algebras $L^\infty(\G)$ considered in this paper always have the Akemann-Ostrand property.  Recall that a von Neumann $M \subseteq \mc B(H)$ has the \textit{Akemann-Ostrand property} (or \textit{property (AO)}) if there exist $\sigma$-weakly
dense unital C$^\ast$-subalgebras $A \subseteq M$ and $B \subseteq M'$ such that
$A$ is locally reflexive and the $\ast$-homomorphism $\pi \circ m:A \otimes_{\text{alg}}B \to \mc B(H)/\mc K(H)$ is continuous with respect to the minimal tensor norm.  Here $m:A \otimes_{\text{alg}}B \to \mc B(H)$ is the multiplication map and $\pi:\mc B(H) \to \mc B(H)/\mc K(H)$ is the canonical quotient map.  Note that an exact C$^\ast$-algebra is always locally reflexive. We now state the main result of this section.  Below we denote by $J: L^2(\G) \to L^2(\G)$ the modular conjugation operator for $L^\infty(\G) = \lambda(C_u(G))''$, so that $JL^\infty(\G)J = L^\infty(\G)'$.

\begin{thm}\label{thm:AO}
Let $\G = \G_{\text{aut}}(B,\psi)$, where $B$ is a finite dimensional C$^\ast$-algebra equipped with a $\delta$-form $\psi$.  Then there exists a unital completely positive map $\theta:C_r(\G) \otimes JC_r(\G)J \to \mc B(L^2(\G))$ such that \[\theta(a \otimes JbJ)-aJbJ \in \mc K(L^2(\G)) \qquad (a,b \in C_r(\G)). \]  In particular, $L^\infty(\G)$ has property (AO).
\end{thm}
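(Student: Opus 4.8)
The plan is to establish property (AO)$^+$ for $L^\infty(\G)$ by constructing the required unital completely positive map $\theta$ explicitly, following very closely the strategy of Vergnioux \cite{Ve05}. The key object will be a $\ast$-homomorphism (or u.c.p. map) defined on the algebraic tensor product $C_r(\G) \otimes_{\text{alg}} JC_r(\G)J$, and the crux of the matter is to show that the map $a \otimes JbJ \mapsto aJbJ$ is well-behaved modulo compact operators. First I would exploit the decomposition $L^2(\G) = \bigoplus_{k \in \N} H_k \otimes H_{\overline{k}}$ from \eqref{eqn:L2decomp} and the fact that, since $\psi$ is a $\delta$-form, $\G$ has $SO(3)$-type fusion rules (Theorem \ref{thm:fusion_rules_QAG}). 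The idea is that both $C_r(\G)$ and $JC_r(\G)J$ act on this graded Hilbert space in a way that ``almost'' commutes: the left action of a matrix element shifts the grading by at most one in each tensor leg, and the same holds for the right action.

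The main technical step is to show that the commutator $[\lambda(u^1_{ij}), J\lambda(u^1_{kl})J]$, and more generally the difference between the product $aJbJ$ and a suitable ``diagonal'' approximation built from the morphisms $\rho_l^{n \boxtimes k}$, is a compact operator on $L^2(\G)$. Here the tridiagonal structure of the commutator operator $T$ (Theorem \ref{thm:decomp_T}) is exactly what makes this tractable: the left and right actions of the generators $\lambda(u^1_{ij})$ differ only by operators that move between adjacent graded pieces $H_k \otimes H_k$ and $H_{k\pm 1} \otimes H_{k \pm 1}$, and the norm of these ``off-diagonal'' contributions decays in $k$ (as quantified by the $q$-number ratios appearing in Lemmas \ref{lem:upper_bound} and \ref{lem:lowbd}). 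To assemble $\theta$, I would define it on the dense subalgebras $C_r(\G)$ and $JC_r(\G)J$ via the natural representation on $L^2(\G)$ after composing with the quotient by compacts, checking continuity with respect to the minimal tensor norm. Exactness of $C_r(\G)$, which is available from Corollary \ref{cor:exactness}, guarantees local reflexivity, supplying the remaining hypothesis in the definition of property (AO).

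The hard part will be verifying the compactness of the relevant commutators and the continuity of the multiplication map with respect to the \emph{minimal} tensor norm; this is where the property of rapid decay (Theorem \ref{thm:RD}) enters decisively. The O$(2n+1)$ norm control on matrix elements of $U^n$ provided by property RD is precisely what lets one control the sums arising when one expands $aJbJ$ into its graded pieces and estimate the off-diagonal error terms uniformly, so that the error lies in the compacts rather than merely being bounded. I would organize the estimate so that the ``diagonal part'' of $aJbJ$ (the piece preserving the grading) defines the u.c.p. map $\theta$, while the off-diagonal parts are shown to be compact by combining the rapid-decay bounds with the fact that the graded projections $P_k$ are finite-rank. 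Once this is in place, property (AO) follows directly from the definition, and the factorization $JL^\infty(\G)J = L^\infty(\G)'$ (recorded just before the theorem statement) identifies $JC_r(\G)J$ as a $\sigma$-weakly dense C$^\ast$-subalgebra of the commutant $L^\infty(\G)'$, completing the argument.
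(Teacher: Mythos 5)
Your proposal goes wrong at its central technical claim. You propose to prove that the commutators $[\lambda(u^1_{ij}), J\lambda(u^1_{kl})J]$ are compact, but these commutators are exactly zero: $JC_r(\G)J \subset JL^\infty(\G)J = L^\infty(\G)'$, so the left action of $C_r(\G)$ and the action of $JC_r(\G)J$ commute on the nose. Compactness of such commutators is not the content of property (AO)$^+$ at all. The real issue is that the multiplication map $a \otimes JbJ \mapsto aJbJ$, although a perfectly good $\ast$-homomorphism on the algebraic tensor product, must be shown to agree modulo $\mc K(L^2(\G))$ with a u.c.p. map defined on the \emph{minimal} tensor product $C_r(\G) \otimes JC_r(\G)J$. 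Your plan to let the ``diagonal part'' of $aJbJ$ define $\theta$ does not address this: there is no reason such a diagonal compression is completely positive or min-continuous. The paper's solution is to build an explicit isometry $V:\ell^2(\hG) \to \ell^2(\hG)\otimes \ell^2(\hG)$, $V(x) = \sum_{l} \alpha_l \sum_{n+k=l}\hat\Delta(\hat P_l x)(\hat P_n \otimes \hat P_k)$ (a geodesic-type decomposition in the spirit of \cite{Ve05}, with isometry following from Lemma \ref{lem:RD_L2norm}), and to set $\theta = V^*(\cdot)V$; this is automatically u.c.p. and min-continuous because the minimal tensor product acts concretely on $L^2(\G)\otimes L^2(\G)$. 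The theorem then reduces to showing $aJbJV^* - V^*(a\otimes JbJ)$ is compact, which is done by factoring through the compact diagonal operator $K = \sum_l \alpha_l \hat P_l$ and proving two correction terms are merely \emph{bounded}.

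The estimates you invoke are also the wrong ones. Theorem \ref{thm:decomp_T} and Lemmas \ref{lem:upper_bound}, \ref{lem:lowbd} concern the operator $T$ comparing left and right multiplication by the \emph{same} generators, and their role in the paper is to show $T$ is bounded \emph{below} on $\C\xi_0^\perp$ (factoriality and fullness); Lemma \ref{lem:upper_bound} gives a uniform bound $2(1+q)$ with no decay in $k$, so nothing there produces compactness. What the (AO)$^+$ argument actually needs is the exponential asymptotic proportionality of the left and right iterated intertwiners, $\inf_{\lambda \in \T}\|\phi^{1\boxtimes k \boxtimes n}_{k+n+r,L} - \lambda\, \phi^{1 \boxtimes k \boxtimes n}_{k+n+r,R}\| \le Cq^{2k+r-1}$, imported from \cite{VaVe} (Lemmas A.1 and A.2) -- an input absent from your outline and not derivable from the fullness analysis. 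Likewise, property RD (Theorem \ref{thm:RD}) is not what yields compactness in the paper's proof: compactness comes from the polynomial decay $\alpha_l \asymp (l+1)^{-1/2}$ of the eigenvalues of $K$. The one point you have right is the last one: exactness (Corollary \ref{cor:exactness}) gives local reflexivity of $C_r(\G)$, which is what upgrades the existence of $\theta$ to property (AO) in the sense defined in the paper.
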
  

In \cite{Oz}, Ozawa proved that a finite von Neumann algebra $M$ with property (AO) is \textit{solid}.  That is, for any diffuse von Neumann subalgebra $A \subset M$, the relative commutant $A'\cap M$ is injective.  Recall that a II$_1$-factor $M$ is said to be \textit{prime} if for every tensor product decomposition $M\cong M_1 \overline{\otimes}M_2$, either $M_1$ or $M_2$ is finite dimensional.  It is easy to see that a solid II$_1$-factor is prime unless it is injective.  As a consequence of Corollary \ref{cor:exactness}, Theorem \ref{thm:factoriality_tracial} and the above discussion, the following result is immediate. 

\begin{cor} \label{cor:solidity_prime}
Let $\psi$ be the $\delta$-trace on $B$ and let $\G = \G_{\text{aut}}(B,\psi)$.  Then $L^\infty(\G)$ is solid and it is a prime II$_1$-factor if $\dim B \ge 8$.
\end{cor}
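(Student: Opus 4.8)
The plan is to derive Corollary \ref{cor:solidity_prime} by combining three results already in hand --- property (AO) for $L^\infty(\G)$ (Theorem \ref{thm:AO}), exactness of $C_r(\G)$ (Corollary \ref{cor:exactness}), and full factoriality of $L^\infty(\G)$ for $\dim B \ge 8$ (Theorem \ref{thm:factoriality_tracial}) --- with Ozawa's solidity theorem \cite{Oz} serving as the bridge from property (AO) to solidity, and the standard ``solid noninjective factor is prime'' dichotomy supplying primeness.

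First I would confirm that $L^\infty(\G)$ satisfies property (AO) in the precise form recalled at the start of this subsection. Take the $\sigma$-weakly dense unital C$^\ast$-subalgebras $A = C_r(\G) \subseteq L^\infty(\G)$ and $B = JC_r(\G)J \subseteq L^\infty(\G)'$. The completely positive map $\theta$ furnished by Theorem \ref{thm:AO} is defined on the minimal tensor product $C_r(\G) \otimes JC_r(\G)J$ and is therefore continuous for the minimal tensor norm. Since $\theta(a \otimes JbJ) - aJbJ \in \mc K(L^2(\G))$, we have $\pi \circ \theta = \pi \circ m$ on elementary tensors, hence on all of $A \otimes_{\text{alg}} B$; thus the multiplication map $\pi \circ m : A \otimes_{\text{alg}} B \to \mc B(L^2(\G))/\mc K(L^2(\G))$ factors through the min-continuous map $\pi \circ \theta$ and is itself min-continuous. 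The remaining hypothesis --- local reflexivity of $A$ --- is exactly what Corollary \ref{cor:exactness} provides, since an exact C$^\ast$-algebra is locally reflexive. Because $\psi$ is the $\delta$-trace, $\G$ is of Kac type and $L^\infty(\G)$ is a finite von Neumann algebra, so Ozawa's theorem \cite{Oz} applies and yields solidity: $A' \cap L^\infty(\G)$ is injective for every diffuse von Neumann subalgebra $A \subseteq L^\infty(\G)$. This establishes the first assertion for all $\dim B$.

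For primeness I would specialize to $\dim B \ge 8$, where Theorem \ref{thm:factoriality_tracial} makes $L^\infty(\G)$ a type II$_1$-factor. Suppose, for contradiction, that $L^\infty(\G) \cong M_1 \overline{\otimes} M_2$ with both $M_1$ and $M_2$ infinite dimensional; then each $M_i$ is a diffuse II$_1$-factor, and by the commutation theorem for tensor products of factors $M_1' \cap L^\infty(\G) = M_2$ and $M_2' \cap L^\infty(\G) = M_1$. Applying solidity to the diffuse subalgebras $M_1$ and $M_2$ in turn forces both $M_1$ and $M_2$ to be injective, whence $L^\infty(\G) = M_1 \overline{\otimes} M_2$ is injective. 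This contradicts the noninjectivity of $L^\infty(\G)$ for $\dim B \ge 5$ (recalled in Section \ref{section:FAP} from \cite{BeMuTu}), so one tensor factor must be finite dimensional and $L^\infty(\G)$ is prime.

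The hard part here is essentially already done inside Theorems \ref{thm:AO}, \ref{thm:factoriality_tracial} and Corollary \ref{cor:exactness}; the corollary itself is a matter of correctly assembling the pieces. The only point requiring care is that property (AO) must be verified in its full form, which is precisely why exactness (Corollary \ref{cor:exactness}) is needed alongside the map $\theta$: without local reflexivity one cannot invoke Ozawa's solidity criterion. The passage from solidity to primeness is the routine dichotomy noted in the text and introduces no new difficulty.
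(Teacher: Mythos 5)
Your proposal is correct and takes essentially the same route as the paper, which records the corollary as an immediate consequence of Theorem \ref{thm:AO} (with exactness from Corollary \ref{cor:exactness} supplying local reflexivity), Ozawa's theorem from \cite{Oz}, Theorem \ref{thm:factoriality_tracial}, and the ``solid II$_1$-factor is prime unless injective'' dichotomy noted in the text. You have merely written out explicitly the assembly of pieces (min-continuity of $\pi\circ m$ via $\theta$, the relative-commutant argument for primeness, non-injectivity for $\dim B \ge 5$) that the paper leaves implicit.
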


\begin{rem} \label{rem:no-reflexivity} 
It is perhaps worthwhile pointing out that the local reflexivity of $C_r(\G)$ is not actually required to establish the solidity of $L^\infty(\G)$.  To see this, note that an examination of the proof that property (AO) implies solidity in \cite{Oz} shows that local reflexivity is only needed to establish, for each finite dimensional operator system $E \subset L^\infty(\G)$, the existence of a net of unital completely positive maps $\{\Psi_t^E:E \to C_r(\G)\}_t$ converging pointwise $\sigma$-weakly to $\id_E$.  In our case, we can just take $\Psi_t^E = \Phi_t|_{E}$, where $\{\Phi_t\}_{t < \dim B}$ is the net constructed in the proof of Theorem \ref{thm:HAP_Kac}.  Indeed, a standard estimate using property (RD) (see for example \cite{Br}, Proposition 5.3) shows that $\Phi_t$ is \textit{ultracontractive}, that is $\hat \Phi_t(L^2(\G)) \subseteq \Lambda_h(C_r(\G))$ for all $t$.  In particular, $\Phi_t(L^\infty(\G)) \subseteq C_r(\G)$ for all $t$.     
\end{rem}

\begin{rem}
The existence of a completely positive map $\theta$ in Theorem \ref{thm:AO} such that $\pi \circ \theta = \pi \circ m$ imposes a structural property on $L^\infty(\G)$ which is stronger than property (AO).  Using the terminology of Isono \cite{Is}, Theorem \ref{thm:AO} says that $L^\infty(\G)$ has \textit{property (AO)$^+$}.  Property (AO)$^+$ can be regarded as a generalization to finite von Neumann algebras of the description of bi-exactness for discrete groups in terms of their reduced C$^\ast$-algebras (see \cite{BrOz}, Chapter 15).  In \cite{PoVa}, Popa and Vaes proved that the II$_1$-factors $M = L(\Gamma)$ associated to i.c.c., weakly amenable, bi-exact groups $\Gamma$ are \textit{strongly solid}.  That is, for any diffuse injective von Neumann subalgebra $A \subseteq M$, the von Neumann algebra generated by the normalizer of $A$, $\mc N_M(A) = \{u \in \mc U(M): uAu^* = A\}$, is injective.  Recently, Isono \cite{Is} extended the results of \cite{PoVa} and proved that a finite von Neumann algebra with property (AO)$^+$ and the weak$^\ast$-completely bounded approximation property (W$^\ast$-CBAP) is strongly solid.  As a result, if one could prove that the von Neumann algebras $L^\infty(\G)$ considered here have the W$^\ast$-CBAP, their strong solidity would be an immediate consequence.  Based on recent work of Freslon \cite{Fr} on the weak amenability free orthogonal quantum groups, we expect that $L^\infty(\G)$ always has this approximation property. 
\end{rem}

\begin{proof}[Proof of Theorem \ref{thm:AO}]    Our arguments follow very closely those of Vergnioux \cite{Ve05}, where property (AO)$^+$ was established for the duals of universal discrete quantum groups.  We note that the proof in \cite{Ve05} makes extensive use of the notion of a Cayley graph associated to a discrete quantum group $\hG$, and assumes that this graph is a tree.  In the case of $\G = \G_{\text{aut}}(B,\psi)$ of interest here, the Cayley graph of $\hG$ is no longer a tree.  In this case,  the Cayley graph is obtained by taking the infinite tree $A_\infty$ and adding a loop starting and ending at each vertex except the origin.  Since such a graph is not ``too far'' from being a tree, we can readily adapt the proof from \cite{Ve05} to our context.  Below we freely use the notation from Sections \ref{expmodels}, \ref{section:RD} and \ref{section:factoriality}.  We also assume that $\delta^2 = q+q^{-1} \ge 5$, so that $C_r(\G)$ is not nuclear.

For each $l \in \N$, put $m_l = [2l+1]_q$, $\alpha_l = \Big(\sum_{n,k \in \N : \ n+k = l}\frac{m_nm_k}{m_l}\Big)^{-\frac{1}{2}}$ and define a linear map \[V:\C[\hG] \to \C[\hG] \otimes_{\text{alg}}\C[\hG]; \quad V(x) =\sum_{l \in \N} \alpha_l \sum_{n,k \in \N : \ n+k = l} \hat \Delta(\hat P_l x)(\hat P_n \otimes \hat P_k) \qquad (x \in \C[\hG]). \]
Using Lemma \ref{lem:RD_L2norm} and its proof, it follows that $V$ extends to an isometry from $\ell^2(\hG) \to \ell^2(\hG)\otimes\ell^2(\hG)$, and $V^*(\hat P_n \otimes \hat P_k)y = \alpha_{n+k} \hat P_{n+k}\Conv ( (\hat P_n \otimes \hat P_k)y)$ ($y \in \ell^2(\hG) \otimes \ell^2(\hG)$, $n, k \in \N$).  Identifying $\ell^2(\hG) \cong L^2(\G)$ via the Fourier transform $\mc F$, we can form a unital completely positive map \[\theta:C_r(\G) \otimes JC_r(\G)J \to \mc B(L^2(\G)); \quad \theta(x) =V^*xV \qquad (x\in C_r(\G) \otimes JC_r(\G)J). \]  Our goal is to prove that \begin{align}\label{eqn:compactness}aJbJV^* - V^*(a \otimes JbJ) \in \mc K(L^2(\G)\otimes L^2(\G), L^2(\G))\end{align} for all $a,b \in C_r(\G)$, which will show that $\theta$ satisfies the hypotheses of the theorem. Note that since $aJbJV^* - V^*(a \otimes JbJ)  = a(JbJV^* - V^*(1 \otimes JbJ) ) + (aV^* - V^*(a \otimes 1))(1 \otimes JbJ)$, it suffices to prove \eqref{eqn:compactness} when one of $a$ or $b$ equals $1$. Let us start with $b=1$.  Note furthermore that we only need to prove \eqref{eqn:compactness} for $a = \lambda(v) \in C_r(\G)$, where $v \in C_u(\G)$ is a matrix element of the irreducible representation $U^1 = U \ominus U^0$.  This follows because the norm-closed subalgebra of $C_r(\G)$ generated by all such $a = \lambda(v)$ equals $C_r(\G)$, and \eqref{eqn:compactness} is preserved by sums, products and norm-limits. 

Fix for the remainder $a = \lambda(v)$ as above, and put $K = \sum_{l \in \N} \alpha_l \hat P_l \in \mc B(L^2(\G))$.   Since $1 \le \frac{m_nm_k}{m_l} = \frac{(1-q^{4n+2})(1-q^{4k+2})}{(1-q^2)(1-q^{4l+2})} \le \frac{1}{(1-q^2)^2}$ for $k+n = l$,
it follows that $(1-q^2)(l+1)^{-1/2} \le \alpha_l \le (l+1)^{-1/2}$.  Thus $K$ is compact with (unbounded) inverse $K^{-1} = \sum_{l \in \N} \alpha_l^{-1} \hat P_l$.  Using $K$, we can write 
\[aV^*-V^*(a \otimes 1) = K(K^{-1}a-aK^{-1})V^* - K(aK^{-1}V^* - K^{-1}V^*(a \otimes 1)), \] so it suffices to show that $A_1 = K^{-1}a-aK^{-1}$ and $A_2= aK^{-1}V^* - K^{-1}V^*(a \otimes 1)$ define bounded linear maps.

First consider $A_1$.  Since $a= \lambda(v)=\hat P_0a\hat P_1+ \sum_{l \ge 1} \sum_{l' \in \N: \ |l-l'|\le 1}\hat P_{l}a\hat P_{l'}$,
we have
\[
A_1 =(1-\alpha_1^{-1}) \hat P_0 a\hat P_1 + \sum_{l \ge 1} \sum_{l' \in \N: \ |l-l'|\le 1}(\alpha_{l}^{-1}-\alpha_{l'}^{-1})\hat P_{l}a \hat P_{l'}. 
\]
It therefore suffices to show that $S := \sup_{l\ge 1}\{\max_{|l'-l|\le 1}|\alpha_l^{-1}-\alpha_{l'}^{-1}| \} < \infty$.  Since $(l+1)^{1/2} \le \alpha_l^{-1} \le \frac{(l+1)^{1/2}}{1-q^2}$, the fact that $S < \infty$ is clear.
 
Now consider $A_2$.  Let $\hat a = \mc F(v) \in \mc B(H_1) \subset \C[\hG]$, let $y \in \C[\hG] \otimes_{\text{alg}}\C[\hG]$, and put $y_{k,n} = (\hat P_k \otimes \hat P_n) y$ for each $k,n \in \N$.  For each inclusion $U^l \subset U^k \boxtimes U^n$ ($k,l,n \ge 0$), let $\phi_{l}^{k \boxtimes n} = C_{(k,n,l)}^{-1/2} \rho_{l}^{k \boxtimes n} :H_l \to H_k \otimes H_n$ be the (unique, up to multiplication by $\T$) isometric morphism defined by \eqref{eqn:rho}-\eqref{eqn:norm_intertwiner}. 
Since $V^*(y_{k,n}) = K \hat P_{k+n}\Conv(y_{k,n})$ and  (by Lemma \ref{lem:RD_L2norm} and its proof)  $\hat P_{l}\Conv(y_{k,n})  = \frac{m_km_n}{m_l}(\phi_{l}^{k \boxtimes n})^*y_{k,n}\phi_{l}^{k \boxtimes n}= \frac{m_km_n}{m_l}\text{Ad}((\phi_{l}^{k \boxtimes n})^*)y_{k,n}$,
it follows that 
\begin{align*}
&A_2(y_{k,n})  = \Conv (\hat a \otimes \hat P_{k+n}\Conv(y_{k,n})) - K^{-1}V^*((\Conv \otimes \id) (\hat a \otimes y_{k,n})  \\
&= \sum_{r = 0, \pm 1} \hat P_{n+k+r} \Big(\Conv (\hat a \otimes \hat P_{k+n}\Conv(y_{k,n})) -  \Conv((\hat P_{k+r}\Conv \otimes \id) (\hat a \otimes y_{k,n}))\Big)  \\
&= \sum_{r=0,\pm 1} \frac{m_1m_km_n}{m_{n+k+r}} \big(\text{Ad}([(\id \otimes \phi_{k+n}^{k\boxtimes n} )\phi_{k+n+r}^{1 \boxtimes k+n}]^*)  - \text{Ad}([(\phi_{k+r}^{1\boxtimes k} \otimes \id)\phi_{k+n+r}^{k+r \boxtimes n}]^*)\big)[\hat a \otimes y_{k,n}], 
\end{align*}
where $\hat P_{-1}$ is zero by convention. Let $\phi_{k+n+r,L}^{1 \boxtimes k\boxtimes n} = (\id \otimes \phi_{k+n}^{k\boxtimes n} )\phi_{k+n+r}^{1 \boxtimes k+n} $ and $\phi_{k+n+r,R}^{1 \boxtimes k\boxtimes n} =(\phi_{k+r}^{1\boxtimes k} \otimes \id)\phi_{k+n+r}^{k+r \boxtimes n}$. By appealing to \cite{VaVe}, Lemma A.1, note that we can find a constant $C = C(q) > 0$ (independent of $k$ and $n$) such that \begin{align} \label{eqn:asym_comm}\inf_{\lambda \in \T} \|\phi_{k+n+r,L}^{1 \boxtimes k\boxtimes n} - \lambda \phi_{k+n+r,R}^{1 \boxtimes k\boxtimes n}\|_{\TL_{2(k+n+r),2(k+n+1)}(\delta)} \le Cq^{2k+r-1} \qquad (n,k \in \N, \ r = 0, \pm 1).  \end{align}  Since we also have $\|T^*(\hat a \otimes y_{k,n})S\|_{\ell^2} \le\Big(\frac{m_{n+k+r}}{m_1m_km_n}\Big)^{1/2}\|T\|\|S\|\|\hat a \otimes y_{k,n}\|_{{\ell^2}^{\otimes 3}}$ for any $S,T \in \Mor(H_{n+k+r}, H_1 \otimes H_k \otimes H_n)$,   
an application of the triangle inequality and \eqref{eqn:asym_comm} yields \[\|\text{Ad}([\phi_{k+n+r,L}^{1 \boxtimes k\boxtimes n}]^*)  - \text{Ad}([\phi_{k+n+r,R}^{1 \boxtimes k\boxtimes n}]^*)[\hat a \otimes y_{k,n}]\|_{\ell^2(\hG)} \le  2Cq^{2k+r-1}\Big(\frac{m_{n+k+r}}{m_1m_km_n}\Big)^{1/2}\|\hat a \otimes y_{k,n}\|_{\ell^2(\hG)^{\otimes 3}}.\]  Using this last inequality and the Cauchy-Schwarz inequality, we obtain the estimate
\begin{align*}
&\|A_2y\|^2_{\ell^2(\hG)} = \sum_{l\ge 0} \Big\|\sum_{k,n: |n+k- l| \le 1 }\hat P_l A_2(y_{k,n}) \Big\|_{\ell^2(\hG)}^2 \\
&=\sum_{l\ge 0} \Big\|\sum_{k,n: |n+k- l| \le 1 } \frac{m_1m_km_n}{m_l} \big(\text{Ad}([\phi_{l,L}^{1 \boxtimes k\boxtimes n}]^*)  - \text{Ad}([\phi_{l,R}^{1 \boxtimes k\boxtimes n}]^*)\big)[\hat a \otimes y_{k,n}]\Big\|_{\ell^2(\hG)}^2 \\
&\le \sum_{l\ge 0} \Big(\sum_{k,n: |n+k- l| \le 1 } 2Cq^{k+l-n-1} \Big(\frac{m_1m_km_n}{m_l}\Big)^{1/2}  \|\hat a\|_{\ell^2(\hG)} \| y_{k,n}\|_{\ell^2(\hG)^{\otimes 2}}\Big)^2 \\
& \le 4C^2m_1 \|\hat a\|_{\ell^2(\hG)}^2  \sum_{l\ge 0}  \Big(\sum_{k,n: |n+k- l| \le 1 } \frac{m_km_n}{m_l}q^{2k+2l-2n-2}\Big)\Big( \sum_{k,n: |k+n-l|\le 1}\| y_{n,k}\|_{\ell^2(\hG)^{\otimes 2}}^2\Big) \\
&\le   4C^2m_1 \|\hat a\|_{\ell^2(\hG)}^2 \sup_{l \ge 0}\Big\{ \sum_{k,n: |n+k- l| \le 1 } \frac{m_km_n}{m_l}q^{2k+2l-2n-2}\Big\} \sum_{l \ge 0} \sum_{k,n: |k+n-l|\le 1}\| y_{n,k}\|_{\ell^2(\hG)^{\otimes 2}}^2.
\end{align*}  
Since the above sum over $l$ is dominated by $3\|y\|_{\ell^2(\hG)^{\otimes 2}}^2$,  $\frac{m_km_n}{m_l} \le \frac{q^{-2}}{(1-q^2)^2}$ for $|k+n-l| \le 1$, and 
\begin{align*}
&\sum_{k,n: |n+k- l| \le 1 } \frac{m_km_n}{m_l}q^{2k+2l-2n-2} \le \sum_{k,n: |n+k- l| \le 1 } \frac{q^{-2}}{(1-q^2)^2}q^{2k+2l-2n-2}  \\
&= \frac{q^{-4}}{(1-q^2)^2}\sum_{r=0,\pm 1} \sum_{k=0}^{l+r}  q^{4k -2r}
 \le  \frac{3q^{-6}}{(1-q^2)^2(1-q^4)} \qquad (l \ge 0),  
\end{align*} we conclude that $A_2$ is bounded.

The proof of  \eqref{eqn:compactness} when $a=1$ and $b \ne 1$ is essentially the same as the above argument.  The only significant difference is that one must use a right-sided version of \eqref{eqn:asym_comm}.  The required inequality  is given by Lemma A.2 from \cite{VaVe}.  We leave the details to the reader.   
\end{proof}

\section{Appendix -- Proof of Theorem \ref{thm:decomp_T}} \label{section:appendix}

In this section we prove Theorem \ref{thm:decomp_T}.  We will freely use the notation and assumptions of Section \ref{section:factoriality}. 

\begin{proof}[Proof of Theorem \ref{thm:decomp_T}]
We can write the commutator map $T:L^2(\G) \to H_1 \otimes L^2(\G) \otimes H_1$ as the sum $T = T_L -T_R$, where $T_L = \sum_{i,j=1}^{d_1} F_1e_j \otimes \lambda(u_{ij}^1) \otimes e_i$ is the part of $T$ corresponding to the left action of $\{\lambda(u_{ij}^1)\}_{i,j=1}^{d_1}$ on $L^2(\G)$ and $-T_R$ is the remaining part of $T$ corresponding to the right action of $\{\lambda(u_{ij}^1)\}_{i,j=1}^{d_1}$ on $L^2(\G)$.  Observe that $T\xi_0 = T_L\xi_0-T_R\xi_0 = 0$.

In terms of matrix elements of irreducible representations of $\G$, the map $T_L$ (respectively $T_R$) corresponds to the operation of tensoring on the left (respectively on the right) by $U^1$.  Since  $U^1 \boxtimes U^k \cong U^{k+1} \oplus U^k \oplus U^{k-1} \cong U^k \boxtimes U^1$ for all $k \ge 1$,
 we can uniquely decompose $T_L$ and $T_R$ as the sums $T_{L,R} = \sum_{\alpha = 0,\pm 1} T^{(\alpha)}_{L,R}$, where \[T^{(0)}_{L,R}\xi_0 = T^{(-1)}_{L,R}\xi_0 = 0, \qquad T^{(+1)}_{L}\xi_0 = T^{(+1)}_{R}\xi_0   = \sum_{i,j=1}^{d_1} F_1e_j \otimes \lambda(u_{ij}^1)\xi_0 \otimes e_i, \]
and \[T^{(\alpha)}_{L,R}: L^2_k(\G) \to H_1 \otimes L^2_{k+\alpha}(\G) \otimes H_1;  \qquad  T^{(\alpha)}_{L,R}|_{L^2_k(\G)} =(\id _{H_1} \otimes P_{k+\alpha} \otimes \id_{H_1})T_{L,R}|_{L^2_k(\G)}   \qquad (k \ge 1).\]
Setting $T^{(\alpha)} := T^{(\alpha)}_L -  T^{(\alpha)}_R$, we obtain the decomposition $T = \sum_{\alpha = 0, \pm 1} T^{(\alpha)}$ stated in Theorem \ref{thm:decomp_T}.
 
To verify the expressions for $T^{(\alpha)}$ given by Theorem \ref{thm:decomp_T}, fix $k \ge 1$, $\xi, \eta \in H_k$ and consider the matrix element  $\lambda\big((\omega_{\eta,\xi} \otimes \id )U^k\big)\xi_0 = \Lambda_h((\omega_{\eta,\xi} \otimes \id )U^k) \in L^2_k(\G)$. 
In the following calculations, we freely use the identities given in Remarks \ref{rem:m_nu_relns} and \ref{rem:tmaps}, the unitary identifications  \[L^2_k(\G) \cong H_k \otimes H_k; \qquad \lambda\big((\omega_{\eta,\xi} \otimes \id )U^k\big)\xi_0 \mapsto \xi \otimes (1 \otimes \eta^*)t_{2k} \qquad (\xi, \eta \in H_k,   k \ge 0), \] given by \eqref{eqn:L2unitarymap}, as well as the identities \[ \sum_{j=1}^{d_1} F_1e_j \otimes e_j = \sum_{j=1}^{d_1} F_1F_1^*F_1^{tr}e_j \otimes e_j = \sum_{j=1}^{d_1} F_1^{tr}e_j \otimes e_j = [3]_q^{1/2} t_2,\]
\[(p_x\otimes p_y)p_{x+y} =p_{x+y} = p_{x+y}(p_x\otimes p_y) \qquad (x,y \in \N).\] 

Since $\bigoplus_{\alpha = 0,\pm 1} \phi_{k+\alpha,L}^{(-\alpha)}: \bigoplus_{\alpha = 0,\pm 1} H_{k+\alpha} \to H_1 \otimes H_k$ is a unitary intertwiner, it follows that \begin{align*} &T_L \big(\lambda\big((\omega_{\eta,\xi} \otimes \id )U^k\big)\xi_0\big) =  \sum_{i,j=1}^{d_1} F_1e_j \otimes \lambda(u_{ij}^1 (\omega_{\eta,\xi} \otimes \id )U^k)\xi_0 \otimes e_i \\
&= \sum_{i,j=1}^{d_1} F_1e_j \otimes \lambda((\omega_{e_i \otimes \eta, e_j \otimes \xi} \otimes \id )(U^1 \boxtimes U^k))\xi_0 \otimes e_i  \\
&= \sum_{\alpha = 0, \pm 1}  \underbrace{ \sum_{i,j=1}^{d_1} F_1e_j \otimes \lambda((\omega_{(\phi_{k+\alpha, L}^{(-\alpha)})^*(e_i \otimes \eta), (\phi_{k+\alpha, L}^{(-\alpha)})^*(e_j \otimes \xi)} \otimes \id )U^{k+\alpha} )\xi_0 \otimes e_i}_{\in H_1 \otimes L^2_{k+\alpha}(\G) \otimes H_1}.    \end{align*} 
Therefore \begin{align*}
&T^{(\alpha)}_L(\xi \otimes (1 \otimes \eta^*)t_{2k}) =  \sum_{i,j=1}^{d_1} F_1e_j \otimes \lambda((\omega_{(\phi_{k+\alpha, L}^{(-\alpha)})^*(e_i \otimes \eta), (\phi_{k+\alpha, L}^{(-\alpha)})^*(e_j \otimes \xi)} \otimes \id )U^{k+\alpha} )\xi_0 \otimes e_i \\
&=  \sum_{j = 1}^{d_1} F_1e_j \otimes  (\phi_{k+\alpha, L}^{(-\alpha)})^*(e_j \otimes \xi) \otimes \sum_{i=1}^{d_1} (1 \otimes (e_i^* \otimes \eta^*)\phi_{k+\alpha, L}^{(-\alpha)})t_{2k+2\alpha} \otimes e_i\\
&= \underbrace{\Big[(1_2 \otimes  (\phi_{k+\alpha, L}^{(-\alpha)})^* ) \Big(  \sum_j F_1e_j \otimes e_j \otimes \xi\Big)\Big]}_{=:A_1^{(\alpha)} \xi} \otimes \underbrace{\Big[(1_{2k+2\alpha} \otimes 1_2 \otimes \eta^*)(1_{2k+2\alpha} \otimes \phi_{k+\alpha, L}^{(-\alpha)}) )t_{2k+2\alpha}\Big]}_{=:A_2^{(\alpha)}(1 \otimes \eta^*)t_{2k}}, \end{align*}
and similarly
\begin{align*} &T^{(\alpha)}_R(\xi \otimes (1 \otimes \eta^*)t_{2k}) \\
& =\underbrace{\Big[ (1_2 \otimes (\phi_{k+\alpha, R}^{(-\alpha)})^*)\Big(\sum_{j=1}^{d_1} F_1e_j \otimes \xi \otimes e_j\Big)\Big]}_{=:B_1^{(\alpha)}\xi} \otimes \underbrace{\Big[ (1_{2k+2\alpha} \otimes \eta^* \otimes 1_2)(1_{2k+2\alpha} \otimes \phi_{k+\alpha, R}^{(-\alpha)})t_{2k+2\alpha}\Big]}_{=:B_2^{(\alpha)}(1 \otimes \eta^*)t_{2k}}  .\end{align*}

Let us first consider $T_L^{(\alpha)}|_{H_k \otimes H_k} = A_1^{(\alpha)} \otimes A_2^{(\alpha)}$.  When $\alpha = 1$, we have 
\begin{align*}
A_1^{(1)}\xi &= [3]_q^{1/2} (1_2 \otimes  (\phi_{k+1, L}^{(-1)})^* )  (t_2 \otimes \xi) = [3]_q^{1/2}(1_2 \otimes p_{2k+2}(p_2 \otimes p_{2k}))(t_2 \otimes \xi) \\
& = [3]_q^{1/2} (p_2 \otimes p_{2k+2})(t_2 \otimes 1_{2k})p_{2k}\xi = \Big(\frac{[2k+3]_q}{[2k+1]_q}\Big)^{1/2}  \phi^{(+1)}_{k,L}\xi,
\end{align*}
and
\begin{align*}
&A_2^{(1)}(1 \otimes \eta^*)t_{2k} = (1_{2k+2} \otimes 1_2 \otimes \eta^*)(1_{2k+2} \otimes p_2 \otimes p_{2k})t_{2k+2} \\
&=\Big(\frac{[2k+1]_q[3]_q}{[2k+3]_q}\Big)^{1/2} (1_{2k+2} \otimes 1_2 \otimes \eta^*)(p_{2k+2} \otimes p_2 \otimes p_{2k})(1_{2k} \otimes t_2 \otimes 1_{2k})t_{2k} \\ 
&= (1_{2k+2} \otimes 1_2 \otimes \eta^*) (1_{2k+2} \otimes 1_{2k+2}) (\phi_{k,R}^{(+1)}\otimes 1_{2k})t_{2k} = \phi_{k,R}^{(+1)}(1_{2k} \otimes \eta^*)t_{2k}. 
\end{align*}

When $\alpha = 0$, we have 
\begin{align*}
 A_1^{(0)}\xi &= [3]_q^{1/2} (p_2 \otimes (\phi_{k, L}^{(0)})^*) (t_2 \otimes \xi) = \Big(\frac{[3]_q[2k]_q}{[2k+2]_q}\Big)^{1/2}(p_2 \otimes p_{2k}(m\otimes 1_{2k-2})(p_2 \otimes p_{2k})) (t_2 \otimes \xi)  \\
&= \Big(\frac{[3]_q[2k]_q}{[2k+2]_q}\Big)^{1/2}(p_2 \otimes p_{2k}(m\otimes 1_{2k-2})) (t_2 \otimes \xi)  \\
&= \Big(\frac{[2k]_q}{[2k+2]_q}\Big)^{1/2}(p_2 \otimes p_{2k}(m\otimes 1_{2k-2}))(m^*\nu \otimes 1_{2k}) \xi \\
&= \Big(\frac{[2k]_q}{[2k+2]_q}\Big)^{1/2}(p_2 \otimes p_{2k})(1_2 \otimes m\otimes 1_{2k-2})(m^* \otimes 1_{2k})(\nu \otimes 1_{2k}) \xi \\
&= \Big(\frac{[2k]_q}{[2k+2]_q}\Big)^{1/2}(p_2 \otimes p_{2k})(m^*m \otimes 1_{2k-2})(\nu \otimes 1_{2k}) \xi \qquad (\text{cf. Remark \ref{rem:m_nu_relns}}) \\
&= \Big(\frac{[2k]_q}{[2k+2]_q}\Big)^{1/2}(p_2 \otimes p_{2k})(m^*\otimes 1_{2k-2})\xi = \phi^{(0)}_{k,L}\xi, \end{align*}
and 
\begin{align*}
&A_2^{(0)}(1 \otimes \eta^*)t_{2k}= (1_{2k} \otimes 1_{2} \otimes \eta^*)(1_{2k} \otimes \phi_{k, L}^{(0)})t_{2k}= \phi_{k,R}^{(0)}(1_{2k} \otimes \eta^*)t_{2k}.
\end{align*}
The last equation follows because both of the isometries $(\phi_{k, R}^{(0)} \otimes 1_{2k})t_{2k}$ and $(1_{2k} \otimes \phi_{k, L}^{(0)})t_{2k}$ belong to the one-dimensional space $\Mor(1, U^k \boxtimes U^1 \boxtimes U^k)$ and therefore $(1_{2k} \otimes \phi_{k, L}^{(0)})t_{2k} = z(\phi_{k, R}^{(0)} \otimes 1_{2k})t_{2k}$ for some $z \in \T$.  But then
 \begin{align*} z &= t_{2k}^*((\phi_{k, R}^{(0)})^* \otimes 1_{2k})(1_{2k} \otimes \phi_{k, L}^{(0)})t_{2k} \\
& =  \frac{[2k]_q}{[2k+2]_q}t_{2k}^*(1_{2k-2}\otimes m \otimes 1_{2k})(1_{2k} \otimes p_2 \otimes 1_{2k}) (1_{2k} \otimes m^* \otimes 1_{2k-2})t_{2k} \\
&=  \frac{[2k]_q}{[2k+2]_q}t_{2k}^*(1_{2k-2}\otimes m \otimes 1_{2k})(1_{2k} \otimes (1_2 - \nu\nu^*) \otimes 1_{2k}) (1_{2k} \otimes m^* \otimes 1_{2k-2})t_{2k} \\
&= \frac{[2k]_q}{[2k+2]_q}\Big( t_{2k}^*(1_{2k-2} \otimes m^*m \otimes 1_{2k-2})t_{2k} - t_{2k}^*t_{2k}\Big) \\
&= \frac{[2k]_q}{[2k+2]_q} \Big(\frac{[2]_q[2k+1]_q}{[2k]_q} -1 \Big) = 1,
\end{align*}
where in the last line we have used $(1_{2k-2} \otimes m \otimes 1_{2k-2})t_{2k} = \big([2]_q[2k+1]_q[2k]_q^{-1}\big)^{1/2}t_{2k-1}$. 

When $\alpha = -1$, we have  
\begin{align*}
&A_1^{(-1)}\xi = [3]_q^{1/2} (1_2 \otimes  (\phi_{k-1, L}^{(+1)})^*)(t_2 \otimes \xi) \\
&= \Big( \frac{ [3]_q^2[2k-1]_q}{[2k+1]_q} \Big)^{1/2}(1_2 \otimes p_{2k-2}(t_2^* \otimes 1_{2k-2})(p_2 \otimes p_{2k})) (t_2 \otimes \xi) \\
&= \Big( \frac{ [3]_q^2[2k-1]_q}{[2k+1]_q} \Big)^{1/2}(p_2 \otimes p_{2k-2})(1_2 \otimes t_2^* \otimes 1_{2k-2}) (t_2 \otimes 1_{2k})\xi \\
&= \Big( \frac{[2k-1]_q}{[2k+1]_q} \Big)^{1/2}(p_2 \otimes p_{2k-2})\xi = \Big( \frac{[2k-1]_q}{[2k+1]_q} \Big)^{1/2} \phi^{(-1)}_{k,L} \xi \qquad (\text{using $(1_2 \otimes t_2^*)(t_2 \otimes 1_2) = [3]_q^{-1}p_2$}),  
\end{align*}
\begin{align*}
&\text{and,} \\
&A_2^{(-1)}(1 \otimes \eta^*)t_{2k} \\
&= \Big(\frac{[3]_q[2k-1]_q}{[2k+1]_q}\Big)^{1/2}(1_{2k-2} \otimes 1_2 \otimes \eta^*) (p_{2k-2}\otimes p_{2} \otimes p_{2k})(1_{2k-2} \otimes t_2\otimes 1_{2k-2})t_{2k-2} \\
&= (1_{2k-2} \otimes 1_2 \otimes \eta^*) (p_{2k-2} \otimes p_{2} \otimes 1_{2k}) \Big(\frac{[3]_q[2k-1]_q}{[2k+1]_q}\Big)^{1/2}(1_{2k} \otimes  p_{2k})(1_{2k-2} \otimes t_2\otimes 1_{2k-2})t_{2k-2} \\
&= (1_{2k} \otimes \eta^*) (\phi^{(-1)}_{k,R} \otimes 1_{2k})t_{2k} = \phi^{(-1)}_{k,R} (1_{2k} \otimes \eta^*)t_{2k}.
\end{align*}

Let us now consider $T_R^{(\alpha)}|_{H_k \otimes H_k} = B_1^{(\alpha)} \otimes B_2^{(\alpha)}.$  When $\alpha = 1,$ we have
\begin{align*}
&B_1^{(1)}\xi  = [3]_q^{1/2} \sigma \big(  (\phi_{k+1, R}^{(-1)})^* \otimes 1_2 \big)(\xi \otimes t_2) = [3]_q^{1/2} \sigma (p_{2k+2}(p_{2k} \otimes p_2) \otimes 1_2)  (1_{2k}\otimes t_2)\xi\\
&= [3]_q^{1/2} \sigma (p_{2k+2}\otimes p_2) (1_{2k}\otimes t_2)\xi = \Big(\frac{[2k+3]_q}{[2k+1]_q}\Big)^{1/2} \sigma \phi^{(+1)}_{k,R}\xi, \end{align*}
and 
\begin{align*}
&B_2^{(1)}(1 \otimes \eta^*)t_{2k} =(1_{2k+2} \otimes \eta^* \otimes 1_2)(1_{2k+2} \otimes \phi_{k+1, R}^{(-1)})t_{2k+2} \\
&= \Big(\frac{[2k+1]_q}{[2k+3]_q}\Big)^{1/2} (1_{2k+2} \otimes \eta^* \otimes 1_2)(p_{2k+2} \otimes 1_{2k+2})\Big(\sum_{j=1}^{d_1}e_j \otimes 1_{2k} \otimes 1_{2k} \otimes F_1e_j\Big)t_{2k} \\
&= \Big(\frac{[2k+1]_q}{[2k+3]_q}\Big)^{1/2} \sigma^*(1_2 \otimes 1_{2k+2}\otimes \eta^*)(1_{2} \otimes p_{2k+2} \otimes 1_{2k})\Big(\sum_{j=1}^{d_1}F_1 e_j \otimes e_j \otimes 1_{2k} \otimes 1_{2k}\Big)t_{2k} \\
&= \Big(\frac{[2k+1]_q}{[2k+3]_q}\Big)^{1/2} \sigma^* (1_{2} \otimes p_{2k+2})\Big(\sum_{j=1}^{d_1}F_1 e_j \otimes e_j \otimes 1_{2k} \otimes \eta^*\Big)t_{2k} \\
&= \Big(\frac{[3]_q[2k+1]_q}{[2k+3]_q}\Big)^{1/2} \sigma^* (1_{2} \otimes p_{2k+2})(t_2 \otimes 1_{2k} \otimes \eta^*)t_{2k} \\ 
&= \Big(\frac{[3]_q[2k+1]_q}{[2k+3]_q}\Big)^{1/2} \sigma^* (1_{2} \otimes p_{2k+2})(t_2 \otimes 1_{2k})(1_{2k} \otimes \eta^*)t_{2k} =  \sigma^*\phi_{k,L}^{(+1)}((1_{2k} \otimes \eta^*)t_{2k}).
\end{align*}

When $\alpha = 0$, we have
\begin{align*}
&B_1^{(0)}\xi  = [3]_q^{1/2}\sigma ((\phi_{k,R}^{(0)})^* \otimes 1_2 )(\xi \otimes t_2) =  [3]_q^{1/2}\Big(\frac{[2k]_q}{[2k+2]_q}\Big)^{1/2}\sigma (p_{2k}(1_{2k-2} \otimes m)(p_{2k} \otimes p_2) \otimes 1_2 )(\xi \otimes t_2) \\
&=[3]_q^{1/2}\Big(\frac{[2k]_q}{[2k+2]_q}\Big)^{1/2}\sigma (p_{2k} \otimes p_2)(1_{2k-2} \otimes m \otimes 1_2 )(1_{2k} \otimes t_2)\xi \\
&=\Big(\frac{[2k]_q}{[2k+2]_q}\Big)^{1/2}\sigma (p_{2k} \otimes p_2)(1_{2k-2} \otimes m \otimes 1_2 )(1_{2k} \otimes 1_{2} \otimes p_2)(1_{2k} \otimes m^*)(1_{2k} \otimes \nu)\xi \\
&=\Big(\frac{[2k]_q}{[2k+2]_q}\Big)^{1/2}\sigma (p_{2k} \otimes p_2)(1_{2k-2} \otimes m \otimes 1_2 )(1_{2k} \otimes m^*)(1_{2k} \otimes \nu)\xi \\
&=\Big(\frac{[2k]_q}{[2k+2]_q}\Big)^{1/2}\sigma (p_{2k} \otimes p_2)(1_{2k-2} \otimes m^*)\xi = \sigma \phi^{(0)}_{k,R} \xi.\\
\end{align*}
Using the fact that $p_2^{\otimes 3}(1_2 \otimes m^*)t_2 = [3]_q^{1/2}p_2^{\otimes 3}(1_2 \otimes m^*)m^*\nu = [3]_q^{1/2}p_2^{\otimes 3}(m^* \otimes 1_2)m^*\nu =
p_2^{\otimes 3}(m^* \otimes 1_2)t_2$, we also have
\begin{align*}
&  B_2^{(0)}(1 \otimes \eta^*)t_{2k} =\Big(\frac{[2k]_q}{[2k+2]_q}\Big)^{1/2} (1_{2k} \otimes \eta^* \otimes 1_2) (p_{2k} \otimes (p_{2k} \otimes p_2)(1_{2k-2} \otimes m^*))t_{2k} \\
&= \Big(\frac{[3]_q[2k]_q[2k-1]_q}{[2k+2]_q[2k+1]_q}\Big)^{1/2} (p_{2k} \otimes \eta^* \otimes p_2) (1_{2k} \otimes 1_{2k-2} \otimes m^*)(1_{2} \otimes t_{2k-2} \otimes 1_2) t_2 \\
&= \Big(\frac{[3]_q[2k]_q[2k-1]_q}{[2k+2]_q[2k+1]_q}\Big)^{1/2} (p_{2k} \otimes \eta^* \otimes p_2) (1_{2} \otimes t_{2k-2} \otimes 1_2 \otimes 1_2) (1_{2} \otimes  m^*)t_2 \\
&= \Big(\frac{[3]_q[2k]_q[2k-1]_q}{[2k+2]_q[2k+1]_q}\Big)^{1/2} (p_{2k} \otimes \eta^* \otimes p_2) (1_{2} \otimes t_{2k-2} \otimes 1_2 \otimes 1_2) (m^* \otimes 1_{2})t_2 \\
&= \Big(\frac{[3]_q[2k]_q[2k-1]_q}{[2k+2]_q[2k+1]_q}\Big)^{1/2}\sigma^*(p_2 \otimes p_{2k} \otimes \eta^*) (1_2 \otimes 1_{2} \otimes t_{2k-2} \otimes 1_2 ) (1_2 \otimes m^*)t_2 \\
&= \Big(\frac{[3]_q[2k]_q[2k-1]_q}{[2k+2]_q[2k+1]_q}\Big)^{1/2}\sigma^*(p_2 \otimes p_{2k} \otimes \eta^*) (1_2 \otimes 1_{2} \otimes t_{2k-2} \otimes 1_2 ) (m^* \otimes 1_{2})t_2 \\
&= \Big(\frac{[3]_q[2k]_q[2k-1]_q}{[2k+2]_q[2k+1]_q}\Big)^{1/2}\sigma^*(p_2 \otimes p_{2k} \otimes \eta^*) (m^* \otimes 1_{4k-2}) (1_{2} \otimes t_{2k-2} \otimes 1_2 )t_2\\
&= \Big(\frac{[2k]_q}{[2k+2]_q}\Big)^{1/2}\sigma^*(p_2 \otimes p_{2k}) (m^* \otimes 1_{2k-2})(1_{2k} \otimes \eta^*) t_{2k} = \sigma^* \phi^{(0)}_{k,L}(1_{2k} \otimes \eta^*) t_{2k}.
\end{align*}

Finally, when $\alpha = -1$, we have
\begin{align*}
&B_1^{(-1)}\xi =   [3]_q^{1/2}\sigma ( (\phi_{k-1,R}^{(+1)})^* \otimes 1_2) ( \xi \otimes t_2) \\
&=  [3]_q^{1/2}\Big(\frac{[3]_q[2k-1]_q}{[2k+1]_q}\Big)^{1/2}\sigma ( p_{2k-2}(1_{2k-2} \otimes t_2^*)(p_{2k} \otimes p_2) \otimes 1_2) ( \xi \otimes t_2) \\
&=  [3]_q^{1/2}\Big(\frac{[3]_q[2k-1]_q}{[2k+1]_q}\Big)^{1/2}\sigma ( p_{2k-2}(1_{2k-2} \otimes t_2^*) \otimes 1_2) ( 1_{2k} \otimes t_2)\xi \\
&=  [3]_q^{1/2}\Big(\frac{[3]_q[2k-1]_q}{[2k+1]_q}\Big)^{1/2}\sigma ( p_{2k-2} \otimes 1_2)(1_{2k-2} \otimes t_2^* \otimes 1_2) ( 1_{2k} \otimes t_2)\xi \\
&=  \Big(\frac{[2k-1]_q}{[2k+1]_q}\Big)^{1/2}\sigma ( p_{2k-2} \otimes p_2)\xi = \Big(\frac{[2k-1]_q}{[2k+1]_q}\Big)^{1/2}\sigma \phi_{k,R}^{(-1)}\xi \quad (\text{using } [3]_q^{-1} = (t_2^* \otimes 1_2)(1_2 \otimes t_2)), \\
&\text{and} \\
&B_2^{(-1)}(1 \otimes \eta^*)t_{2k} = \Big(\frac{[3]_q[2k-1]_q}{[2k+1]_q}\Big)^{1/2}(1_{2k-2} \otimes \eta^* \otimes 1_2) (1_{2k-2} \otimes (p_{2k}\otimes p_{2})(1_{2k-2} \otimes t_2)p_{2k-2})t_{2k-2} \\
&= \Big(\frac{[3]_q[2k-1]_q}{[2k+1]_q}\Big)^{1/2}(p_{2k-2} \otimes \eta^* \otimes p_2) (1_{2k-2} \otimes 1_{2k-2} \otimes t_2)t_{2k-2} \\
&= \Big(\frac{[3]_q[2k-1]_q}{[2k+1]_q}\Big)^{1/2}\sigma^*(p_2 \otimes p_{2k-2} \otimes \eta^*) (1_{2} \otimes t_{2k-2} \otimes 1_2)t_{2} \\
&= \sigma^*(p_2 \otimes p_{2k-2} \otimes \eta^*) t_{2k} = \sigma^* \phi_{k,L}^{(-1)}(1_{2k} \otimes \eta^*)t_{2k}.
\end{align*}
The theorem now follows from these identities. 

\end{proof}

\end{document}